\newcommand{\bbC}{{\mathbb{C}}}
\newcommand{\bbD}{{\mathbb{D}}}
\newcommand{\bbG}{{\mathbb{G}}}
\newcommand{\bbN}{{\mathbb{N}}}
\newcommand{\bbR}{{\mathbb{R}}}
\newcommand{\bbZ}{{\mathbb{Z}}}
\newcommand{\fre}{{\frak{e}}}
\newcommand{\x}{{\mathbf{x}}}
\newcommand{\z}{{\mathbf{z}}}
\newcommand{\calE}{{\mathcal{E}}}
\newcommand{\calF}{{\mathcal F}}
\newcommand{\calL}{{\mathcal L}}
\newcommand{\calS}{{\mathcal S}}
\newcommand{\calT}{{\mathcal T}}
\newcommand{\bddot}{{\boldsymbol{\cdot}}}
\newcommand{\dott}{\,\cdot\,}
\newcommand{\no}{\nonumber}
\newcommand{\lb}{\label}
\newcommand{\f}{\frac}
\newcommand{\ul}{\underline}
\newcommand{\ol}{\overline}
\newcommand{\ti}{\tilde  }
\newcommand{\wti}{\widetilde  }
\newcommand{\Wr}{\text{\rm{Wr}}}
\newcommand{\dist}{\text{\rm{dist}}}
\newcommand{\Sz}{\text{\rm{Sz}}}
\newcommand{\spann}{\text{\rm{span}}}
\newcommand{\rank}{\text{\rm{rank}}}
\newcommand{\ran}{\text{\rm{ran}}}
\newcommand{\ess}{\text{\rm{ess}}}
\newcommand{\s}{\text{\rm{s}}}
\newcommand{\intt}{\text{\rm{int}}}
\newcommand{\bi}{\bibitem}
\newcommand{\beq}{\begin{equation}}
\newcommand{\eeq}{\end{equation}}
\newcommand{\ba}{\begin{align}}
\newcommand{\ea}{\end{align}}
\newcommand{\veps}{\varepsilon}
\newcommand{\vw}{{\vec{w\!}}\,}
\DeclareMathOperator{\ca}{cap}
\newcounter{smalllist}
\newenvironment{SL}{\begin{list}{{\rm\roman{smalllist})}}{%
\setlength{\topsep}{0mm}\setlength{\parsep}{0mm}\setlength{\itemsep}{0mm}%
\setlength{\labelwidth}{2em}\setlength{\leftmargin}{2em}\usecounter{smalllist}%
}}{\end{list}}
\newcommand{\comm}[1]{}
\DeclareMathOperator{\Real}{Re}
\DeclareMathOperator{\Ima}{Im}
\DeclareMathOperator*{\slim}{s-lim}
\numberwithin{equation}{section}
\newtheorem{theorem}{Theorem}[section]
\newtheorem*{p2.1}{Proposition 2.1}
\newtheorem{proposition}[theorem]{Proposition}
\newtheorem{lemma}[theorem]{Lemma}
\newtheorem{corollary}[theorem]{Corollary}
\theoremstyle{definition}
\newtheorem{oq}{Open Question}
\newtheorem*{remark}{Remark}
\newtheorem*{remarks}{Remarks}
\newtheorem*{definition}{Definition}
\newcommand{\abs}[1]{\lvert#1\rvert}
\newcommand{\jap}[1]{\langle #1 \rangle}
\newcommand{\norm}[1]{\lVert#1\rVert}
\begin{document}

\title[Finite Gap Jacobi Matrices, II]{Finite Gap Jacobi Matrices,\\II. The Szeg\H{o} Class}
\author[J.~S.~Christiansen, B.~Simon, and M.~Zinchenko]{Jacob S.~Christiansen$^{1}$, Barry Simon$^{2,3}$,\\and
Maxim Zinchenko$^2$}

\thanks{$^1$ Department of Mathematical Sciences, University of Copenhagen,
Universitetsparken 5, DK-2100 Copenhagen, Denmark. E-mail: stordal@math.ku.dk.
Supported in part by a Steno Research Grant from FNU, the Danish Research Council}

\thanks{$^2$ Mathematics 253-37, California Institute of Technology, Pasadena, CA 91125.
E-mail: bsimon@caltech.edu; maxim@caltech.edu}

\thanks{$^3$ Supported in part by NSF grant DMS-0652919}

%\date{May 31, 2009}    %Submitted
%\date{October 9, 2009} %Updated
\date{November 16, 2009} %Add Yuditskii
\keywords{Isospectral torus, Szeg\H{o} asymptotics, orthogonal polynomials}
\subjclass[2000]{42C05, 58J53, 14H30}

\begin{abstract} Let $\fre\subset\bbR$ be a finite union of disjoint closed intervals. We study
measures whose essential support is $\fre$ and whose discrete eigenvalues obey a $1/2$-power condition.
We show that a Szeg\H{o} condition is equivalent to
\[
\limsup \f{a_1\cdots a_n}{\ca(\fre)^n}>0
\]
(this includes prior results of Widom and Peherstorfer--Yuditskii). Using Remling's extension of the
Denisov--Rakhmanov theorem and an analysis of Jost functions, we provide a new proof of Szeg\H{o} asymptotics,
including $L^2$ asymptotics on the spectrum. We use heavily the covering map formalism of Sodin--Yuditskii
as presented in our first paper in this series.
\end{abstract}

\maketitle

%%%%%%%%%%%%%%%%%%%%%%%%%%%%%%%%%%%%%%%%%%%%%%%%%%%%%%%%%%%%%%
\section{Introduction} \lb{s1}
%%%%%%%%%%%%%%%%%%%%%%%%%%%%%%%%%%%%%%%%%%%%%%%%%%%%%%%%%%%%%%

In this paper, we study Jacobi matrices, $J$, and asymptotics of the associated orthogonal polynomials (OPRL), where
$\sigma_\ess(J)$ is a finite gap set, $\fre$. By this we mean that $\fre$ is a finite union of disjoint closed intervals,
\begin{equation} \lb{1.1}
\fre =\bigcup_{j=1}^{\ell+1} \, [\alpha_j,\beta_j] \qquad
\alpha_1 < \beta_1 < \alpha_2 < \cdots < \beta_{\ell+1}
\end{equation}
$\ell$ counts the number of gaps, that is, bounded open intervals in $\bbR\setminus\fre$.

We recall that a Jacobi matrix is a tridiagonal matrix which we label
\begin{equation} \lb{1.1a}
J=
\begin{pmatrix}
b_1 & a_1 & 0  & \cdots \\
a_1 & b_2 & a_2  & \cdots \\
0 & a_2 & b_3 & \cdots \\
\vdots & \vdots & \vdots & \ddots
\end{pmatrix}
\end{equation}
The Jacobi parameters $\{a_n,b_n\}_{n=1}^\infty$ have $a_n >0$ and $b_n\in\bbR$. There is a one-one
correspondence between probability measures, $d\mu$, of compact support on $\bbR$ and bounded Jacobi matrices
where $d\mu$ is the spectral measure for $J$ and the vector $(1,0,\dots)^t$. Moreover, $d\mu$ determines $J$
via recursion relations for the orthonormal polynomials, $p_n(x)$, which are ($a_0\equiv 0$)
\begin{equation} \lb{1.1b}
xp_n(x) =a_{n+1} p_{n+1}(x) + b_{n+1} p_n(x) + a_n p_{n-1}(x)
\end{equation}
See \cite{SzBk,GBk,OPUC1,Rice} for background on OPRL.

This paper is the second in a series---the first, \cite{CSZ1}, henceforth called paper~I, studied the isospectral
torus, an $\ell$-dimensional family of two-sided almost periodic Jacobi matrices with essential spectrum, $\fre$,
about which we'll say more later in this introduction. We note for now that these matrices have periodic
coefficients if and only if the harmonic measure of the intervals $[\alpha_j,\beta_j]$ are all rational (i.e., if
$d\rho_\fre$ is the potential theoretic equilibrium measure for $\fre$, then each $\rho_\fre ([\alpha_j,\beta_j])$
is rational; for background on potential theory in spectral analysis, see \cite{StT,EqMC}). We'll call this
the periodic case.

In the current paper, we want to study Szeg\H{o}'s theorem for the general finite gap case. Of course, the
phrase ``Szeg\H{o}'s theorem'' can be ambiguous since Szeg\H{o} was so prolific, but by this we mean a set of
results concerned with leading asymptotics in the theory of orthogonal polynomials on the unit circle (OPUC).
Even here, there is ambiguity since some of the results can be interpreted in terms of Toeplitz determinants and
there are several related objects. Indeed, we'll distinguish between what we call Szeg\H{o}'s theorem and Szeg\H{o}
asymptotics.

In the OPUC case, the recursion parameters $\{\alpha_n\}_{n=0}^\infty$ lie in $\bbD=\{z\mid\abs{z}<1\}$ and are
called Verblunsky coefficients. We use $\varphi_n(z)$ for the orthonormal polynomials and write the measure $d\mu$ as
\begin{equation} \lb{1.2}
d\mu(\theta) = w(\theta)\, \f{d\theta}{2\pi} + d\mu_\s(\theta)
\end{equation}
where $d\mu_\s$ is $d\theta/2\pi$-singular. One also defines $\rho_n =(1-\abs{\alpha_n}^2)^{1/2}$ (see
\cite{SzBk, GBk,OPUC1,OPUC2,1Ft} for background on OPUC).

Then what we'll call Szeg\H{o}'s theorem for OPUC says that
\begin{equation} \lb{1.3}
\lim_{N\to\infty} \prod_{n=0}^N \rho_n = \exp\biggl( \int_0^{2\pi} \log (w(\theta)) \f{d\theta}{2\pi}\biggr)
\end{equation}
Notice that since $\rho_n \leq 1$, the limit on the left always exists, although it may be $0$. By Jensen's inequality,
the integral on the right is nonpositive, but may diverge to $-\infty$, in which case we interpret the exponential as
$0$. It is easy to see that the left side is nonzero if and only if $\sum_{n=0}^\infty \abs{\alpha_n}^2 <\infty$. Thus,
\eqref{1.3} implies
\begin{equation} \lb{1.4}
\sum_{n=0}^\infty \, \abs{\alpha_n}^2 <\infty \quad \Leftrightarrow \quad
\int \log (w(\theta))\, \f{d\theta}{2\pi} >-\infty
\end{equation}

By Szeg\H{o} asymptotics, we mean the fact that when both conditions in \eqref{1.4} hold, there is an explicit
nonvanishing function, $G$, on $\bbC\setminus\ol{\bbD}$ so that for $z$ in that set,
\begin{equation} \lb{1.5}
\lim_{n\to\infty} z^{-n} \varphi_n(z) = G(z)
\end{equation}
In terms of the conventional Szeg\H{o} function,
\begin{equation}
D(z)=\exp \biggl(\int \f{e^{i\theta}+z}{e^{i\theta}-z} \log(w(\theta)) \f{d\theta}{2\pi}\biggr), \quad z\in\bbD
\end{equation}
we have $G(z)=\ol{D(1/\bar z)}^{-1}$.

Analogs of Szeg\H{o}'s theorem for OPRL, where $\fre$ is a single interval (typically $\fre=[-1,1]$ or $[-2,2]$),
were found initially by Szeg\H{o} \cite{Sz22a}, with important developments by Shohat \cite{Sh} and Nevai
\cite{Nev79}. These works suppose no or finitely many eigenvalues outside $\fre$. The natural condition on
eigenvalues (see \eqref{1.7} and \eqref{1.9} below) was found by Killip--Simon \cite{KS} and Peherstorfer--Yuditskii
\cite{PYpams}. The best form of Szeg\H{o}'s theorem (with a Szeg\H{o} condition; see below) is

\begin{theorem}[Simon--Zlato\v{s}] \lb{T1.1} Let $J$ be a Jacobi matrix with essential spectrum $[-2,2]$,
$\{a_n,b_n\}_{n=1}^\infty$ its Jacobi parameters, $\{x_k\}$ a listing of its eigenvalues outside $[-2,2]$,
and
\begin{equation} \lb{1.6}
d\mu(x) =w(x)\, dx + d\mu_\s(x)
\end{equation}
its spectral measure. Define
\begin{equation} \lb{1.7}
\calE(J) =\sum_k \, (\abs{x_k}-2)^{1/2}
\end{equation}
and
\begin{equation} \lb{1.8}
A_n = a_1 \cdots a_n \qquad \bar A=\limsup A_n \qquad \ul{A\!} = \liminf A_n
\end{equation}
Consider the three conditions:
\begin{SL}
\item[{\rm{(i)}}] Szeg\H{o} condition
\begin{equation} \lb{1.8x}
\int_{-2}^2 \log(w(x)) (4-\abs{x}^2)^{-1/2} \, dx >-\infty
\end{equation}

\item[{\rm{(ii)}}] Blaschke condition
\begin{equation} \lb{1.9}
\calE(J) <\infty
\end{equation}

\item[{\rm{(iii)}}] Widom condition
\begin{equation} \lb{1.10}
0 < \ul{A\!} \, \leq \bar A <\infty
\end{equation}
\end{SL}
Then any two of {\rm{(i)--(iii)}} imply the third, and if they hold, the following have limits as $N\to\infty$:
\begin{equation} \lb{1.11}
A_N, \quad \sum_{n=1}^N b_n, \quad \sum_{n=1}^N (a_n-1)
\end{equation}
and
\begin{equation} \lb{1.12}
\sum_{n=1}^\infty\, \abs{a_n-1}^2 + \abs{b_n}^2 <\infty
\end{equation}
\end{theorem}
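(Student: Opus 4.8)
The statement to prove is Theorem~\ref{T1.1} (the Simon--Zlato\v{s} theorem), which in fact is quoted here as a known result; so my ``proof proposal'' is really a reconstruction of the standard argument via the theory of the $m$-function, $\Im m$-sum rules, and the step-by-step sum rule machinery of Killip--Simon and Simon--Zlato\v{s}. The plan is to work with the conformal map $x\mapsto x + x^{-1}$ of $\bbD$ onto $\ol{\bbC}\setminus[-2,2]$, pull back the $m$-function to the disc, and encode $J$ through a ``Jost-type'' or Szeg\H{o}-type function whose boundary values recover $w$ and whose zeros encode the eigenvalues $\{x_k\}$. The quantity $A_N = a_1\cdots a_n$ has a direct interpretation in this picture: up to an explicit constant it is the value of a Blaschke--Szeg\H{o} product evaluated at the point corresponding to $\infty$, so the three conditions (i)--(iii) correspond respectively to an integrability condition on the boundary ($\log w \in L^1$ of the equilibrium measure), the Blaschke condition on zeros, and the nonvanishing/boundedness of the whole product.

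\smallskip

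The key steps, in order, are as follows. First, I would set up the Jost function $u(z)$ for $z\in\bbD$ via the perturbation determinant / Jost solution of the difference equation, establishing that $u$ is analytic on $\bbD$, that its zeros in $\bbD$ are exactly the images of the eigenvalues $x_k$, and that its boundary values satisfy $\abs{u(e^{i\theta})}^2 = c\,(\text{something})/w$ a.e., so that $\log\abs{u}$ on the boundary is controlled by the Szeg\H{o} integral \eqref{1.8x}. Second, I would prove the \emph{step-by-step sum rule}: comparing $J$ with its once-stripped Jacobi matrix $J^{(1)}$ (delete the first row and column), one gets a coefficient-stripping formula relating the Jost functions of $J$ and $J^{(1)}$, and taking $z\to 0$ (the point over $\infty$) yields
\begin{equation}
\log a_1 = (\text{boundary integral of }\log\abs{u}) - (\text{boundary integral for }J^{(1)}) + (\text{eigenvalue terms}).
\end{equation}
Iterating gives a formula for $\log A_N$ as a boundary Szeg\H{o} integral plus a convergent eigenvalue sum plus a remainder $\log\abs{u_N(0)}$ coming from the $N$-times stripped matrix. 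Third, I would invoke the semicontinuity of the entropy/Szeg\H{o} integral (an application of Jensen's inequality, exactly as in Killip--Simon) to show that along the iteration the boundary integrals converge and the remainder term has a limit; this is what forces $A_N$ to converge once two of the three conditions hold, and simultaneously forces $\log\abs{u_N(0)}\to 0$, which is the $\ell^2$ statement \eqref{1.12} after unwinding (the $C_0$ / higher-order sum rule giving $\sum \abs{a_n-1}^2+\abs{b_n}^2<\infty$). Fourth, the convergence of $\sum_1^N b_n$ and $\sum_1^N(a_n-1)$ comes from the first-moment ($P_2$-type) sum rule, i.e.\ differentiating the Jost function relation once more at $z=0$.

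\smallskip

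The main obstacle is the third step: controlling the remainder term $\log\abs{u_N(0)}$ as $N\to\infty$ and proving it actually has a limit (indeed tends to $0$ in the favorable case), rather than merely being bounded. This is where the Denisov--Rakhmanov-type input and the positivity of relative entropy do the real work --- one needs that the stripped measures $d\mu^{(N)}$ converge to the equilibrium situation (here, since $\fre=[-2,2]$, to the free Jacobi matrix) in a strong enough sense, and that the step-by-step sum rule terms are all of one sign so that convergence of partial sums is automatic. A secondary technical point is handling the eigenvalue sum: one must check that the Blaschke condition \eqref{1.9}, phrased as $\sum(\abs{x_k}-2)^{1/2}<\infty$ on the real line, is exactly equivalent to the Blaschke condition $\sum(1-\abs{z_k})<\infty$ for the pulled-back zeros in $\bbD$, which is a short computation with the conformal map near the endpoints $\pm2$ but must be done carefully to get the $1/2$-power right. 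Once these are in place, the equivalence ``any two imply the third'' is a matter of reading off the master formula $\log A_N = (\text{Szeg\H{o} integral}) + (\text{Blaschke sum}) + o(1)$ and noting that finiteness of any two of the three summands is equivalent to finiteness (and a genuine limit) of the third.
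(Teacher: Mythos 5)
This theorem is quoted in the paper from Simon--Zlato\v{s} \cite{SZ} without proof, so the relevant comparison is with the machinery the paper builds in Sections~\ref{s2}--\ref{s4} to generalize it. Your overall plan --- pull back to the disc, prove a step-by-step sum rule relating $\log a_1$ to a boundary entropy term and an eigenvalue (Blaschke) term, iterate, and use semicontinuity of the relative entropy --- is indeed the Killip--Simon/Simon--Zlato\v{s} strategy and matches what the paper does for general $\fre$ (Theorems~\ref{T2.2}, \ref{T4.2}, \ref{T4.5}, \ref{T4.6}). But as written the proposal has two genuine gaps. First, you attribute the $\ell^2$ conclusion \eqref{1.12} to ``$\log\abs{u_N(0)}\to 0$ after unwinding.'' That cannot work: $u_N(0)$ is (up to the fixed constant $u(0;\mu)$) just $A_N$ itself, so its convergence says nothing beyond the convergence of $A_N$. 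The bound \eqref{1.12} comes from an entirely separate, higher-order sum rule --- the $P_2$ (Killip--Simon) sum rule, whose weight is $(4-x^2)^{+1/2}$ rather than $(4-x^2)^{-1/2}$, whose eigenvalue term carries a $3/2$-power, and whose right-hand side $\sum[\tfrac14 b_n^2+\tfrac12 G(a_n)]$ is a sum of nonnegative terms. One must check separately that (i) and (ii) imply the hypotheses of that sum rule (the quasi-Szeg\H{o} integral is controlled by the Szeg\H{o} integral since $(4-x^2)^{1/2}\le 4(4-x^2)^{-1/2}$, and the $3/2$-power eigenvalue sum by the $1/2$-power one). Your remark that ``the step-by-step sum rule terms are all of one sign'' is true of $P_2$ but false of the $C_0$/$Z$ sum rule you are actually iterating.

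Second, the engine of ``any two imply the third'' is missing. Semicontinuity of $S(\rho\mid\cdot)$ applied to $J^{(n)}$ gives an inequality in only one direction; the converse direction requires the truncated trial matrices (the analog of the paper's $J_n$ in Proposition~\ref{P4.4}, whose parameters agree with $J$ up to site $n$ and are free thereafter), for which $Z(J_n)\ge 0$ and $J_n\overset{s}{\longrightarrow}J$ gives $Z(J)\le\liminf Z(J_{n_j})$. One also needs uniform control of the eigenvalue sums of the stripped and truncated matrices (interlacing and rank-one perturbation bounds, Section~\ref{s3} of the paper); this is where the equivalence of $\sum(\abs{x_k}-2)^{1/2}<\infty$ with the Blaschke condition in $\bbD$ is used, and it is not automatic that $\calE(J^{(n)})$ or $\calE(J_n)$ stays bounded. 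Finally, the ``Denisov--Rakhmanov-type input'' you invoke as the main obstacle is a red herring for this theorem: no Rakhmanov/Remling theorem enters the Simon--Zlato\v{s} argument (the paper uses Remling's theorem only later, for Szeg\H{o} asymptotics in Sections~\ref{s6}--\ref{s7}).
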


Before leaving our summary of the case $\fre=[-2,2]$, we note that Damanik--Simon \cite{Jost1} have proven
Szeg\H{o} asymptotics in some cases where the Szeg\H{o} condition fails. This will not concern us here, but will
be studied in the finite gap case in paper~III \cite{CSZ3}.

In Section~\ref{s4}, we prove a precise analog of the statement ``any two of (i)--(iii) imply the third'' for general
finite gap sets, $\fre$. We note that for the periodic case, this is a prior result of Damanik--Killip--Simon
\cite{DKS}. There are also prior results for the general finite gap case in Widom \cite{Widom}, Aptekarev
\cite{Apt}, and Peherstorfer--Yuditskii \cite{PY,PYarx}; see Section~\ref{s4} for more details.

The limit results, \eqref{1.11} and \eqref{1.12}, need modification, however. First, even in the general
one-interval case, one needs $a_1 \cdots a_n/C^n$ for a suitable constant $C$. The theory of regular measures
\cite{StT,EqMC} says the right value of $C$ must be $\ca(\fre)$, the logarithmic capacity of $\fre$---a result
that, in this context, goes back at least to Widom \cite{Widom} who also discovered that $a_1 \cdots a_n/
\ca(\fre)^n$ doesn't have a limit but is only asymptotically almost periodic.

These limit results are expressed most naturally in terms of the isospectral torus associated to $\fre$. For any
Jacobi matrix obeying the analogs of (i)--(iii), there is an element $\{\ti a_n, \ti b_n\}_{n=1}^\infty$ of
the isospectral torus so that
\begin{equation} \lb{1.13}
\lim_{n\to\infty} \abs{a_n-\ti a_n} + \abs{b_n - \ti b_n} =0
\end{equation}
This result, which goes back to Aptekarev \cite{Apt} and Peherstorfer--Yuditskii \cite{PY,PYarx} using
variational methods, will be proven with our techniques in Section~\ref{s6}, where we'll also prove that
$\lim (a_1 \cdots a_n/\ti a_1 \cdots \ti a_n)$ exists and is nonzero. (In paper~I, we proved that in the
isospectral torus, $\ti a_1 \cdots \ti a_n/\ca(\fre)^n$ is almost periodic in $n$.)

An interesting open question concerns the analog of \eqref{1.12}:

\begin{oq} \lb{OQ1.1} Is $\sum_{n=1}^\infty \abs{a_n-\ti a_n}^2 + \abs{b_n-\ti b_n}^2 <\infty$ when the
analogs of (i)--(iii) hold?
\end{oq}

In Section~\ref{s7}, we'll prove an analog of Szeg\H{o} asymptotics, namely, away from the interval $[\alpha_1,
\beta_{\ell+1}]$, the ratio $p_n(z)/\ti p_n(z)$ has a nonzero limit where $\ti p_n$ are the OPRL for $\{\ti a_n,
\ti b_n\}_{n=1}^\infty$.

Let us next summarize some of the techniques we'll use below, in part to standardize some notation. Coefficient
stripping plays an important role in the analysis: if $J$ has Jacobi parameters $\{a_k, b_k\}_{k=1}^\infty$,
then the $n$-times stripped Jacobi matrix, $J^{(n)}$, is the one with parameters $\{a_{n+k}, b_{n+k}\}_{k=1}^\infty$,
that is,  with
\begin{equation} \lb{1.14}
a_k (J^{(n)}) = a_{k+n}(J) \qquad b_k (J^{(n)}) = b_{n+k}(J)
\end{equation}

If the $m$-function of $J$ is defined on $\bbC_+ = \{z\mid\Ima z>0\}$ by
\begin{equation} \lb{1.15}
m(z,J)=\jap{\delta_1, (J-z)^{-1}\delta_1} = \int \f{d\mu(x)}{x-z}
\end{equation}
then we have the coefficient stripping relation that goes back to Jacobi and Stieltjes,
\begin{equation} \lb{1.16}
m(z,J)^{-1} = -z+b_1 - a_1^2 m(z,J^{(1)})
\end{equation}

We'll make heavy use of the covering space formalism introduced in spectral theory by Sodin--Yuditskii \cite{SY}
and presented with our notation in paper~I. $\x(z)$ is the unique meromorphic map of $\bbD$ to $\bbC\cup\{\infty\}
\setminus\fre$ which is locally one-one with
\begin{equation} \lb{1.17}
\x(z)=\f{x_\infty}{z} + O(1)
\end{equation}
near $z=0$ and $x_\infty >0$.

There is a (Fuchsian) group, $\Gamma$, of M\"obius transformations of $\bbD$ onto itself so that
\begin{equation} \lb{1.18}
\x(z)=\x(w) \quad \Leftrightarrow \quad \exists \gamma\in\Gamma \text{ so that } \gamma(z)=w
\end{equation}
A natural fundamental set, $\calF$, is defined as follows:
\begin{equation} \lb{1.19}
\calF^\intt = \{z\mid\abs{z} < \abs{\gamma(z)}, \, \text{all } \gamma\neq1,\, \gamma\in\Gamma\}
\end{equation}
$\partial\calF^\intt\cap\bbD$ is then $2\ell$ orthocircles, $\ell$ in each half-plane. $\calF$ is $\calF^\intt$
union the $\ell$ orthocircles in $\bbC_+$. $\x$ is then one-one and onto from $\calF$ to $\bbC\cup\{\infty\}
\setminus\fre$.

$\calL$, the set of limit points of $\Gamma$, is defined as $\ol{\{\gamma(0)\mid\gamma\in\Gamma\}}\cap\partial\bbD$.
$\x$ can be meromorphically extended from $\bbD$ to all of $\bbC\cup\{\infty\}\setminus\calL$, or alternatively,
there is a map $\x^\sharp\colon\bbC\cup\{\infty\}\setminus\calL$ to $\calS$, the two-sheeted Riemann surface of
$[\prod_{j=1}^{\ell+1}(z-\alpha_j)(z-\beta_j)]^{1/2}$. All this is described in more detail in paper~I of this series.

That paper also discusses Blaschke products, $B(z,w)$, of the Blaschke factors at $\{\gamma(w)\}_{\gamma\in\Gamma}$.
$B(z) \equiv B(z,0)$ is related to the potential theoretic Green's function, $G_\fre(x)$,
for $\fre$ by
\begin{equation} \lb{1.19x}
\abs{B(z)}=e^{-G_\fre (\x(z))}
\end{equation}
which, in particular, implies that near $z=0$,
\begin{equation} \lb{1.20}
B(z) = \f{\ca(\fre)}{x_\infty}\, z+O(z^2)
\end{equation}

Finally, we use heavily the pullback of $m$ to $\bbD$ via
\begin{equation} \lb{1.21}
M(z)= -m(\x(z))
\end{equation}

We end this introduction with a sketch of the contents of this paper. Our approach to Szeg\H{o}'s theorem is a
synthesis of the covering map method and the approach of Killip--Simon \cite{KS}, Simon--Zlato\v{s} \cite{SZ}, and
Simon \cite{S288} used for $\fre=[-2,2]$. As such, step-by-step sum rules are critical. These are found in
Section~\ref{s2}. One disappointment is that we have thus far not succeeded in finding an analog of what has
come to be called the Killip--Simon theorem (from \cite{KS}). This result gives necessary and sufficient conditions
for the case $\fre=[-2,2]$ that $\sum_{n=1}^\infty (a_n-1)^2 + b_n^2 <\infty$.

\begin{oq} \lb{OQ1.2} Is there a Killip--Simon theorem for the general finite gap Jacobi matrix?
\end{oq}

We note that Damanik--Killip--Simon \cite{DKS} have found an analog for the case where each band has harmonic measure
exactly $(\ell+1)^{-1}$.

Section~\ref{s3} provides a technical interlude on eigenvalue limit theorems needed in the later sections.
Section~\ref{s4} proves a Szeg\H{o}-type theorem for general finite gap $\fre$. Section~\ref{s5} defines Jost functions and
Jost solutions. Section~\ref{s6} proves the existence of the claimed $\{\ti a_n,\ti b_n\}_{n=1}^\infty$ in the
isospectral torus and asymptotics of Jost solutions. Section~\ref{s7} proves asymptotic formulae for the orthogonal
polynomials away from the convex hull of $\fre$ (i.e., the interval $[\alpha_1,\beta_{\ell+1}]$), and Section~\ref{s8}
$L^2$ asymptotics on $\fre$.

The idea that we use in Sections~\ref{s6} and \ref{s7} of first proving Jost asymptotics and using that to get Szeg\H{o}
asymptotics is borrowed from an analog for $\fre=[-2,2]$ of Damanik--Simon \cite{Jost1}. But Section~\ref{s7} has a
simplification of their equivalence argument that is an improvement even for $\fre=[-2,2]$. Most of the results in
Sections~\ref{s6}--\ref{s8} are explicit or implicit in Peherstorfer--Yuditskii \cite{PY,PYarx}. We claim two
novelties here. First, the underlying mechanism of our proof of asymptotics is different from their variational
approach. Instead, we use a recent theorem of Remling \cite{Remppt} about approach to the isospectral torus,
together with an analysis of automorphic characters of Jost functions. Second, by using ideas in a different
paper of Peherstorfer--Yuditskii \cite{PYpams}, we can clarify the $L^2$-convergence result of Section~\ref{s8}.

\medskip

We would like to thank F.~Peherstorfer and P.~Yuditskii for the private communication \cite{Pehpc}.
J.S.C.\ would like to thank M.~Flach and A.~Lange for the hospitality of
Caltech where this work was completed.

%%%%%%%%%%%%%%%%%%%%%%%%%%%%%%%%%%%%%%%%%%%%%%%%%%%%%%%%%%%%%%
\section{Step-by-Step Sum Rules} \lb{s2}
%%%%%%%%%%%%%%%%%%%%%%%%%%%%%%%%%%%%%%%%%%%%%%%%%%%%%%%%%%%%%%

As noted in the introduction, a key to the approach to Szeg\H{o}-type theorems for $\fre=[-2,2]$ that we'll follow is
step-by-step sum rules. Our goal in this section is to prove those for a general finite gap $\fre$. In Theorem~7.5
of paper I, we proved such results for measures in the isospectral torus, and our discussion here will closely follow
the proof there. The major change is that there, with finitely many eigenvalues in $\bbR\setminus\fre$, we could use
finite Blaschke products. Here, because we do not wish to suppose a priori a $1/2$-power condition on the eigenvalues,
we'll need the alternating Blaschke products of Theorem~4.9 of paper I. Here is the result:

\begin{theorem}[Nonlocal step-by-step sum rule] \lb{T2.1} Let $J$ be a Jacobi matrix with $\sigma_\ess (J)
=\fre$. Let  $J^{(1)}$ be the once-stripped Jacobi matrix and let $\{p_j\}_{j=1}^\infty$ be the points in $\calF$
that are mapped by the covering map, $\x$, to the eigenvalues of $J$ and $\{z_j\}_{j=1}^\infty$ the corresponding
points for the eigenvalues of $J^{(1)}$. Let $B_\infty$ be the alternating Blaschke product with poles at
$\{\gamma(p_j)\}_{j=1; \gamma\in\Gamma}^\infty$ and zeros at $\{\gamma(z_j)\}_{j=1;\gamma\in\Gamma}^\infty$.
Let $B(z)$ be the Blaschke product with zeros at $\{\gamma(0)\}_{\gamma\in\Gamma}$. Let $M(z)$ be the $m$-function,
\eqref{1.21}, for $J$, and $M^{(1)}(z)$ the one for $J^{(1)}$. Then
\begin{SL}
\item[{\rm{(a)}}] $\lim_{r\uparrow 1} M(re^{i\theta})\equiv M(e^{i\theta})$ and $\lim_{r\uparrow 1} M^{(1)}
(re^{i\theta})\equiv M^{(1)} (e^{i\theta})$ exist for ${d\theta}/{2\pi}$-a.e. $\theta$.

\item[{\rm{(b)}}] Up to sets of ${d\theta}/{2\pi}$ measure zero,
\begin{equation} \lb{2.1}
\{\theta\mid \Ima M(e^{i\theta})\neq 0\}= \{\theta\mid\Ima M^{(1)} (e^{i\theta})\neq 0\}
\end{equation}

\item[{\rm{(c)}}]
\begin{equation} \lb{2.2}
\log\biggl( \f{\Ima M(e^{i\theta})}{\Ima M^{(1)} (e^{i\theta})}\biggr) \in \bigcap_{p<\infty} L^p
\biggl( \partial\bbD, \f{d\theta}{2\pi}\biggr)
\end{equation}

\item[{\rm{(d)}}] We have
\begin{equation} \lb{2.3}
a_1 M(z) = B(z) B_\infty(z) \exp\biggl( \int \f{e^{i\theta}+z}{e^{i\theta}-z}\,
\log \biggl( \f{\Ima M(e^{i\theta})}{\Ima M^{(1)}(e^{i\theta})}\biggr) \f{d\theta}{4\pi}\biggr)
\end{equation}
\end{SL}
\end{theorem}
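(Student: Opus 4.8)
The plan is to follow the strategy of Theorem~7.5 of paper~I, with the principal modification being to replace the finite Blaschke products used there by the alternating Blaschke products of Theorem~4.9 of paper~I, so that no a priori $1/2$-power condition on the eigenvalues is needed. First I would establish (a): since $M$ is the pullback of the Herglotz function $-m(\,\cdot\,,J)$ under $\x$, it is a meromorphic function on $\bbD$ with positive imaginary part on $\x^{-1}(\bbC_+)$, so on each component one has nontangential boundary values a.e.\ by the standard Fatou theorem for functions of bounded characteristic; the poles in $\bbD$ accumulate only at $\calL$, which has measure zero on $\partial\bbD$, so the boundary values exist a.e.\ on $\partial\bbD$. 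The same applies to $M^{(1)}$. For (b), the key point is that $\Ima M(e^{i\theta})\neq0$ a.e.\ precisely on $\x^{-1}(\fre)$ up to measure zero (this is essential support being $\fre$ plus harmonic-measure considerations), and this set is the same for $M^{(1)}$ since $J$ and $J^{(1)}$ have the same essential spectrum; alternatively, (b) follows directly from the coefficient-stripping formula \eqref{1.16} which relates $M$ and $M^{(1)}$ by a M\"obius transformation with real coefficients on a.e.\ fiber.

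The heart of the matter is (d), and (c) is essentially a byproduct of the construction. The strategy is to form the function
\[
F(z) = \f{a_1 M(z)}{B(z) B_\infty(z)}
\]
and show it is a nonvanishing analytic function on $\bbD$ of bounded characteristic whose boundary modulus is $|\Ima M(e^{i\theta})/\Ima M^{(1)}(e^{i\theta})|^{1/2}$, and which is ``outer''-like in the sense of having no singular inner part. To set this up I would: (1) use coefficient stripping \eqref{1.16} in the pulled-back form to write $M(z)^{-1} = \x(z) - b_1 - a_1^2 M^{(1)}(z)\cdot(\text{sign/conjugation factors})$, hence express $a_1 M / (a_1 M^{(1)})^{?}$ in a way that controls the zeros and poles; (2) identify the zeros of $M$ on $\bbD$ (these are the $\x$-preimages of the eigenvalues of $J^{(1)}$, i.e.\ the $\gamma(z_j)$) and the poles of $M$ (the $\x$-preimages of the eigenvalues of $J$, i.e.\ the $\gamma(p_j)$, together with the pole coming from $\x(z)\sim x_\infty/z$ at $z=0$, which is exactly cancelled by the zero of $B(z)$ at $z=0$ from \eqref{1.20}); (3) conclude that $F$ has no zeros or poles in $\bbD$. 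The alternating Blaschke product $B_\infty$ of Theorem~4.9 of paper~I is designed precisely so that the product over the orbit converges under only the Blaschke-type (not $1/2$-power) condition and carries the correct automorphic character; invoking that theorem gives $|B_\infty(e^{i\theta})|=1$ a.e.

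With $F$ nonvanishing and of bounded characteristic, I would take $\log F$, note that $|F(e^{i\theta})|^2 = \Ima M(e^{i\theta})/\Ima M^{(1)}(e^{i\theta})$ a.e.\ (from the boundary behavior of $B$, $B_\infty$, and the Herglotz structure), and then argue that $\log|F|$ has vanishing singular part, i.e.\ that $F$ is given by the Poisson--Herglotz representation with no point masses, which yields exactly \eqref{2.3} once one knows $\log(\Ima M/\Ima M^{(1)}) \in \bigcap_{p<\infty}L^p$. That last integrability is statement (c), and I would prove it as in paper~I and in \cite{KS,SZ}: the real part of the entropy-type integral is controlled because $M$ is Herglotz (one-sided bound from Jensen/concavity of $\log$), while a matching bound in the other direction comes from the step-by-step structure and the fact that $\Ima M^{(1)}$ is comparable to $\Ima M$ times a bounded factor coming from \eqref{1.16}; boundedness of $a_1,b_1$ and of $\x$ on the fibers over $\fre$ gives $L^p$ control for every finite $p$ (though not $L^\infty$). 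Matching the automorphic character is a bookkeeping step: both sides of \eqref{2.3} must transform the same way under $\Gamma$, which is guaranteed by the alternating construction and fixes the otherwise-undetermined unimodular constant.

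The main obstacle I anticipate is the convergence and character-matching of the alternating Blaschke product $B_\infty$ under the mere Blaschke condition rather than a $1/2$-power (Szeg\H{o}) condition on the eigenvalues — this is exactly why Theorem~4.9 of paper~I is invoked, and the delicate point is checking that the resulting boundary values satisfy $|B_\infty|=1$ a.e.\ and that $F$ inherits bounded characteristic (so that it has a legitimate inner-outer factorization) even though $M$ itself may a priori fail to be of bounded characteristic before dividing out the poles. A secondary subtlety is (b): one must be careful that the a.e.\ equality of the supports of $\Ima M$ and $\Ima M^{(1)}$ holds without assuming anything beyond $\sigma_\ess(J)=\fre$, which I would get from the M\"obius relation in \eqref{1.16} holding fiberwise a.e.\ together with the fact that a real M\"obius transformation preserves the real line. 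Everything else is a direct adaptation of the isospectral-torus argument from paper~I.
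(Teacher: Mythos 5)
Your setup is the paper's: you form the same function $h(z)=a_1M(z)/(B(z)B_\infty(z))$, observe the zero/pole cancellation, get (b) and the boundary identity $\abs{a_1M(e^{i\theta})}^2=\Ima M(e^{i\theta})/\Ima M^{(1)}(e^{i\theta})$ from \eqref{1.16}, and recognize that \eqref{2.3} should be the Poisson representation of $\log h$. But the step you defer --- showing that $h$ is of bounded characteristic and that $\log\abs{h}$ has no singular part, together with the $L^p$ integrability in (c) --- is the entire content of the proof, and the route you sketch for it does not close. ``Bounded characteristic'' for $M$ (or for $h$) would require a Blaschke condition on the poles $\{\gamma(p_j)\}$, which is precisely the $1/2$-power hypothesis the theorem refuses to assume; that is why the \emph{alternating} products of Theorem~4.9 of paper~I are used, and for those $\abs{B_\infty}=1$ a.e.\ on $\partial\bbD$ is not the ``standard result on Blaschke products'' you invoke. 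Likewise, a one-sided Jensen bound plus ``comparability from \eqref{1.16}'' gives at best an $L^1$ upper bound in one direction; it does not put $\log\abs{M(e^{i\theta})}$ in every $L^p$, nor does it exclude a singular inner factor.

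The paper's actual mechanism is different and bypasses inner--outer factorization entirely: it bounds the \emph{argument}, not the modulus. Since $M$ is Herglotz-positive on $\bbC_+\cap\calF$ and negative on $\bbC_-\cap\calF$, one has $\abs{\arg M}\leq\pi$ on $\calF$, and eqn.\ (4.84) of Theorem~4.9 of paper~I gives $\abs{\arg(B(z)B_\infty(z))}\leq C$ on $\calF$; character automorphy of $h$ then yields
\[
\sup_{0<r<1}\int\abs{\Ima\log h(re^{i\theta})}^p\,\f{d\theta}{2\pi}<\infty ,
\]
and the M.~Riesz conjugate-function theorem converts this into $\log h\in\bigcap_{p<\infty}H^p(\bbD)$. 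That single conclusion delivers (a), (c), and (d) simultaneously: existence of boundary values, $\log\abs{M}\in\bigcap_p L^p$ (hence \eqref{2.2} via the identity $\abs{a_1M}^2=\Ima M/\Ima M^{(1)}$), and the pure Poisson representation \eqref{2.3} with no singular part. Without this argument-bound-plus-M.~Riesz step (or a genuine substitute for it), your proof has a hole exactly where you flagged ``the main obstacle.''
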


\begin{remarks} 1. We've labeled the $p$'s and $z$'s to be infinite in number, although there may be only
finitely many. Moreover, we need to group them into not one sequence but potentially $2\ell+2$ if each of the
points in $\{\alpha_j, \beta_j\}_{j=1}^{\ell+1}$ is a limit point of eigenvalues in $\bbR\setminus\fre$. Once
this is done, one forms an alternating Blaschke product for each sequence (the $p$'s and $z$'s in each sequence
alternate along a boundary arc of $\calF$ or on $(0,1)$ or $(-1,0)$), and then takes the product of
these $2\ell +2$ alternating Blaschke products.

\smallskip
2. $\Ima M$ and $\Ima M^{(1)}$ have the same sign at each point of $\partial\bbD$, positive or negative, depending
on whether $\x$ maps to an upper or lower lip of $\fre$.

\smallskip
3. We've written (c) and (d) assuming that the set in \eqref{2.1} is all of $\partial\bbD$ (up to sets of Lebesgue
measure zero). A more proper version is that $\lim_{r\uparrow 1}\abs{M(re^{i\theta})}^2$ has a limit as $r\uparrow 1$
which, when multiplied by $a_1^2$, is the ratio  $\Ima M/\Ima M^{(1)}$ at points in the set in \eqref{2.1}. It is that
boundary value that enters in \eqref{2.2} and \eqref{2.3}.
\end{remarks}

\begin{proof} We follow the arguments used for Theorem~7.5 of paper I. For $z\in\bbD$, not a pole or zero of
$M$\!, let
\begin{equation} \lb{2.4}
h(z) = \f{a_1 M(z)}{B(z)B_\infty(z)}
\end{equation}
At the poles and zeros of $M$\!, $h(z)$ has removable singularities and no zero values, so $h$ is nonvanishing and
analytic in all of $\bbD$.

All of $M$\!, $B$, and $B_\infty$ are positive on $(0,\veps)$ for $\veps$ small, so one can choose a branch of
$\log(h(z))$ which has $\Ima (\log(h(z)))=0$ on $(0,\veps)$. Since $\Ima M>0$ on $\bbC_+\cap\calF$ and $\Ima M<0$
on $\bbC_-\cap\calF$, with this choice,
\begin{equation} \lb{2.5}
\abs{\arg(M(z))}\leq \pi \text{ on } \calF
\end{equation}
By eqn.\ (4.84) in Theorem~4.9 of paper I, there is a constant $C$ so that
\begin{equation} \lb{2.6}
\abs{\arg(B_\infty(z) B(z))} \leq C \text{ on } \calF
\end{equation}

As in the proof of Theorem~7.5 of paper I, this plus the fact that $h(z)$ is character automorphic implies that
\begin{equation} \lb{2.7}
\sup_{0<r<1} \int \abs{\Ima (\log(h(re^{i\theta})))}^p\, \f{d\theta}{2\pi} <\infty
\end{equation}
Thus, by the M.~Riesz theorem,
\begin{equation} \lb{2.8}
\log(h)\in\bigcap_{p<\infty} H^p (\bbD)
\end{equation}
This implies that $\log(h)$, and so $M$\!, has boundary values and
\begin{equation} \lb{2.9}
\log \abs{M(e^{i\theta})}\in\bigcap_{p<\infty} L^p \biggl(\partial\bbD, \f{d\theta}{2\pi}\biggr)
\end{equation}
Taking boundary values in (see \eqref{1.16})
\begin{equation} \lb{2.10}
M(z)^{-1} = \x(z) - b_1 - a_1^2 M^{(1)} (z)
\end{equation}
shows that \eqref{2.1} holds, and on the set where $\Ima M\neq 0$,
\begin{equation} \lb{2.11}
\abs{a_1 M(e^{i\theta})}^2 = \f{\Ima M(e^{i\theta})}{\Ima M^{(1)} (e^{i\theta})}
\end{equation}
This and \eqref{2.9} imply \eqref{2.2}, and \eqref{2.3} is just the Poisson representation for
$\log(h(z))$.
\end{proof}

The main use we'll make of \eqref{2.3} is to divide by $B(z)$ and take $z\to 0$ using \eqref{1.17} and
\eqref{1.20}. The result is:

\begin{theorem}[Step-by-step $C_0$ sum rule] \lb{T2.2}
\begin{equation} \lb{2.12}
\f{a_1}{\ca(\fre)} = B_\infty (0) \exp\biggl( \int_0^{2\pi}
\log\biggl( \f{\Ima M(e^{i\theta})}{\Ima M^{(1)} (e^{i\theta})}\biggr) \f{d\theta}{4\pi}\biggr)
\end{equation}
\end{theorem}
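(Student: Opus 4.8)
The plan is to take the nonlocal sum rule \eqref{2.3}, divide through by $B(z)$, and evaluate the limit as $z\to 0$. Rewriting \eqref{2.3} as
\[
\f{a_1 M(z)}{B(z)} = B_\infty(z)\exp\biggl(\int \f{e^{i\theta}+z}{e^{i\theta}-z}\log\biggl(\f{\Ima M(e^{i\theta})}{\Ima M^{(1)}(e^{i\theta})}\biggr)\f{d\theta}{4\pi}\biggr),
\]
I would analyze each of the three factors separately as $z\to 0$.

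For the left-hand side: by \eqref{1.21} and \eqref{1.17}, near $z=0$ the point $\x(z)=x_\infty/z+O(1)$ tends to $\infty$, and the standard large-argument expansion $m(x)=-1/x+O(x^{-2})$ of the $m$-function \eqref{1.15} (valid since $\mu$ is a probability measure) gives $M(z)=-m(\x(z))=z/x_\infty+O(z^2)$. Combined with \eqref{1.20}, namely $B(z)=(\ca(\fre)/x_\infty)\,z+O(z^2)$, this yields $M(z)/B(z)\to 1/\ca(\fre)$, so the left side tends to $a_1/\ca(\fre)$.

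For the right-hand side: since $\x$ maps $0$ to $\infty\notin\fre$ while the $p_j$, $z_j$, and the $\gamma(0)$ all map to finite points, $z=0$ is neither a zero nor a pole of $B_\infty$, so $B_\infty(z)\to B_\infty(0)$, a finite nonzero value. For the exponential, the kernel $\f{e^{i\theta}+z}{e^{i\theta}-z}$ is uniformly bounded for $\abs{z}\le 1/2$ and tends to $1$ pointwise (indeed uniformly) as $z\to 0$; since $\log(\Ima M/\Ima M^{(1)})\in L^1(d\theta/2\pi)$ by \eqref{2.2}, dominated convergence lets me pass to the limit inside the integral, producing $\exp\bigl(\int_0^{2\pi}\log(\Ima M/\Ima M^{(1)})\,\f{d\theta}{4\pi}\bigr)$. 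Assembling the three pieces gives \eqref{2.12}.

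There is no genuinely hard step here — all the content sits in \eqref{2.3}, and this is just the residue/limit extraction announced in the text. The only points needing care are bookkeeping ones: following Remark~3 after Theorem~\ref{T2.1}, the symbols $\Ima M$, $\Ima M^{(1)}$, and $a_1^2\abs{M(e^{i\theta})}^2$ are to be read on the common set of \eqref{2.1}, and one should note that what we are doing amounts to evaluating the nonvanishing analytic function $h$ of \eqref{2.4} at $z=0$ via its Poisson representation, so that the limit $M(z)/B(z)\to 1/\ca(\fre)$ is exactly what converts $h(0)=B_\infty(0)\exp(\cdots)$ into \eqref{2.12}.
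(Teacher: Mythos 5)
Your proposal is correct and is exactly the argument the paper intends: the text presents Theorem~\ref{T2.2} as the result of dividing \eqref{2.3} by $B(z)$ and letting $z\to 0$ via \eqref{1.17} and \eqref{1.20}, which is precisely what you carry out. Your filled-in details (the expansion $M(z)=z/x_\infty+O(z^2)$ from the large-argument behavior of the $m$-function, the finiteness and nonvanishing of $B_\infty(0)$, and dominated convergence for the Poisson kernel using \eqref{2.2}) are all sound.
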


%%%%%%%%%%%%%%%%%%%%%%%%%%%%%%%%%%%%%%%%%%%%%%%%%%%%%%%%%%%%%%
\section{Fun and Games with Eigenvalues} \lb{s3}
%%%%%%%%%%%%%%%%%%%%%%%%%%%%%%%%%%%%%%%%%%%%%%%%%%%%%%%%%%%%%%

Sum rules include eigenvalue sums---it appears somewhat hidden in \eqref{2.12} as $B_\infty(0)$. Since, in exploiting
sum rules, we'll be looking at the behavior of sums over families, often with infinitely many elements, we'll
need control on such sums. This was true already in the single interval case as studied by \cite{KS,SZ},
but there the main tool needed was a simple variational principle. Eigenvalues above or below the essential
spectrum are given by a linear variational principle. This is not true for eigenvalues in gaps, and so we'll
need some extra techniques, which we put in the current section. We note that there are still limitations on
what can be done in gaps. For example, for perturbations of elements of the finite gap isospectral torus, there
is a $1/2$ critical Lieb--Thirring bound at the external edges \cite{FSW} but not yet one known for internal
gap edges \cite{HS}.

We begin with two results about the relation of eigenvalues of $J$ and $J^{(n)}$, the $n$-times stripped Jacobi
matrix of \eqref{1.14}.

\begin{theorem}\lb{T3.1} Let $J$ be a Jacobi matrix with $\sigma_\ess(J)=\fre$. Let $c\in (\beta_j, \alpha_{j+1})$,
one of the gaps of $\bbR\setminus\fre$. Suppose $f$ is defined, positive, and monotone on $(\beta_j,c)$ with
$\lim_{x\downarrow\beta_j} f(x)=0$. Let  $c>x_1(J) > x_2(J) >\cdots >\beta_j$ be the eigenvalues of $J$ in
$(\beta_j,c)$. Then the eigenvalues of $J$ and $J^{(1)}$ strictly interlace, that is, either
\begin{equation} \lb{3.1}
x_1(J) >x_1(J^{(1)}) > x_2(J) > x_2 (J^{(1)})> \dots
\end{equation}
or
\begin{equation} \lb{3.2}
x_1 (J^{(1)})>x_1(J) > x_2 (J^{(1)})>x_2(J) > \dots
\end{equation}
In particular, $\sum_{k=1}^\infty [f(x_k(J))-f(x_k(J^{(1)}))]$ is always conditionally convergent.
\end{theorem}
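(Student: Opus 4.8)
The plan is to reduce everything to the known interlacing of eigenvalues under coefficient stripping, and then extract conditional convergence from the monotonicity of $f$ together with the telescoping nature of the sum. First I would recall the standard fact (from rank-one perturbation theory / the theory of the $m$-function) that in any gap $(\beta_j,\alpha_{j+1})$ of the essential spectrum, the eigenvalues of $J$ and $J^{(1)}$ strictly interlace. The cleanest way to see this here is via \eqref{1.16}: on the interval $(\beta_j,\alpha_{j+1})$, $m(x,J)$ is real, smooth, and strictly increasing between consecutive poles (since $m'(x)=\int (x-t)^{-2}\,d\mu(t)>0$), and the poles of $m(\cdot,J)$ are exactly the eigenvalues of $J$ in that gap while the zeros of $m(\cdot,J)$ are exactly the eigenvalues of $J^{(1)}$ there — the latter because $m(x,J)^{-1} = -x+b_1-a_1^2 m(x,J^{(1)})$ shows $m(x,J)=0 \Leftrightarrow m(x,J^{(1)})=\infty$. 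Between two consecutive poles of $m(\cdot,J)$, the increasing function $m(\cdot,J)$ runs from $-\infty$ to $+\infty$ and hence has exactly one zero; this gives a zero of $m(\cdot,J)$ strictly between consecutive eigenvalues of $J$, i.e. an eigenvalue of $J^{(1)}$ strictly between consecutive eigenvalues of $J$, and symmetrically (using that $m(\cdot,J^{(1)})$ is also increasing between its poles) an eigenvalue of $J$ strictly between consecutive eigenvalues of $J^{(1)}$. Since eigenvalues in $(\beta_j,c)$ can only accumulate at $\beta_j$ (they cannot accumulate inside the gap, as $\fre=\sigma_\ess$), this forces exactly one of the two alternatives \eqref{3.1} or \eqref{3.2}, according to whether the topmost eigenvalue in the gap belongs to $J$ or to $J^{(1)}$.

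Next I would deduce conditional convergence of $\sum_k [f(x_k(J)) - f(x_k(J^{(1)}))]$. Fix, say, the case \eqref{3.1}; the other case is identical with the roles swapped. Because $f$ is positive and monotone on $(\beta_j,c)$ with $f(x)\to 0$ as $x\downarrow\beta_j$, and because both sequences $x_k(J)$ and $x_k(J^{(1)})$ decrease to $\beta_j$, we have $f(x_k(J))\to 0$ and $f(x_k(J^{(1)}))\to 0$. Now form the partial sums $S_N = \sum_{k=1}^N [f(x_k(J)) - f(x_k(J^{(1)}))]$. Using the interlacing $x_1(J) > x_1(J^{(1)}) > x_2(J) > \cdots$ and that $f$ is monotone (say increasing; if decreasing, reverse the inequalities throughout) each bracket $f(x_k(J))-f(x_k(J^{(1)}))$ has a definite sign, and one can rewrite $S_N$ by re-pairing terms as a telescoping-type expression. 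Concretely, writing the interlaced values in decreasing order as $y_1 > y_2 > y_3 > \cdots$ with $y_{2k-1}=x_k(J)$ and $y_{2k}=x_k(J^{(1)})$, the partial sum telescopes: $S_N = \sum_{k=1}^N (f(y_{2k-1}) - f(y_{2k}))$, and since the $f(y_j)$ form a monotone sequence converging to $f(\beta_j^+)=0$, the series $\sum_j (-1)^{j+1} f(y_j)$ is an alternating series with terms decreasing to $0$; hence it converges by the alternating series test, and $S_N$ is exactly a subsequence of its partial sums. This gives conditional convergence of $\sum_k[f(x_k(J))-f(x_k(J^{(1)}))]$. One should also note that if there are only finitely many eigenvalues in $(\beta_j,c)$, the sum is finite and there is nothing to prove; the statement is vacuous unless $\beta_j$ is a genuine accumulation point.

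The main obstacle I anticipate is purely bookkeeping rather than conceptual: one must handle carefully the four sign/ordering cases (which of \eqref{3.1}, \eqref{3.2} holds, and whether $f$ is increasing or decreasing), and one must be careful that the interlacing is \emph{strict} so that no pairing degenerates — this is where the strict monotonicity of $m$ (equivalently, that $d\mu$ is not finitely supported, guaranteed since $\sigma_\ess(J)=\fre$ is infinite) is used. A secondary subtlety is that the theorem asserts \emph{conditional} convergence only; indeed absolute convergence can genuinely fail, and the argument above makes essential use of the alternating structure, so one should resist trying to prove more. Finally, I note that this argument localizes to a single gap $(\beta_j,\alpha_{j+1})$ and to a single endpoint $\beta_j$; the analogous statement at the other endpoint $\alpha_{j+1}$, and at the outer edges $\alpha_1, \beta_{\ell+1}$, follows by the same reasoning (or by reflection $x\mapsto -x$), which is presumably how the full collection of eigenvalue sequences referred to in the Remark after Theorem~\ref{T2.1} is controlled.
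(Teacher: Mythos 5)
Your proposal is correct and follows essentially the same route as the paper: interlacing is read off from the strict monotonicity of $m(\cdot,J)$ in the gap together with the identification (via \eqref{1.16}) of its poles and zeros with the eigenvalues of $J$ and $J^{(1)}$, and the conditional convergence is the alternating series test applied to the interlaced sequence. The paper's proof is just a terser version of yours (it dispatches the convergence with ``standard for alternating sums''), and your observation that positivity plus $f(\beta_j^+)=0$ forces $f$ to be increasing correctly disposes of the apparent extra case.
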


\begin{remarks} 1. For simplicity of notation, we stated this and the following theorem for $(\beta_j,c)$,
but a similar result holds for $(c,\alpha_{j+1})$ and also for $(-\infty,\alpha_1)$ and $(\beta_{\ell+1},\infty)$.

\smallskip
2. By iteration, we also get convergence of $\sum_{k=1}^\infty [f(x_k(J))-f(x_k(J^{(n)}))]$ for each $n$.
\end{remarks}

\begin{proof} By the fact that $x_k(J)$ are the poles of $m(z)$ in $(\beta_j,c)$ and $x_k(J^{(1)})$ the zeros,
and since $\f{d}{dz} m(z) = \int \f{d\mu(x)}{(x-z)^2} >0$ for $z\in (\beta_j,c)$, we see the interlacing,
which implies \eqref{3.1} (if $m(c)\leq 0$) or \eqref{3.2} (if $m(c)>0$). The conditional convergence
of the sum is standard for alternating sums.
\end{proof}

\begin{theorem}\lb{T3.2} Under the hypotheses of Theorem~\ref{T3.1}, if
\begin{equation} \lb{3.3}
S\equiv\sup_n \biggl|\, \sum_{k=1}^\infty f(x_k(J))-f(x_k(J^{(n)}))\biggr| <\infty
\end{equation}
then
\begin{equation} \lb{3.4}
\sum_{k=1}^\infty f(x_k(J)) <\infty
\end{equation}
\end{theorem}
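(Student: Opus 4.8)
The plan is to replace the eigenvalue sums by integrals of eigenvalue‑counting functions and then combine the interlacing of Theorem~\ref{T3.1} with some soft spectral theory. First note that a positive monotone $f$ with $\lim_{x\downarrow\beta_j}f(x)=0$ is necessarily nondecreasing, and put $F:=\lim_{x\uparrow c}f(x)<\infty$. We may assume $J$ has infinitely many eigenvalues in $(\beta_j,c)$, since otherwise \eqref{3.4} is a finite sum. For a Jacobi matrix $J'$ with $\sigma_\ess(J')=\fre$ set $N_{J'}(t):=\#\{k:x_k(J')>t\}$ for $t\in(\beta_j,c)$ (finite, since eigenvalues cannot accumulate in the open gap). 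The layer‑cake formula gives $\sum_k f(x_k(J'))=\int_{(\beta_j,c)}N_{J'}\,df$; moreover, using that $|N_J-N_{J^{(n)}}|$ is bounded (shown below) together with dominated convergence, the conditionally convergent sum of Theorem~\ref{T3.1} and the following remarks satisfies
\[
\sum_{k}\bigl[f(x_k(J))-f(x_k(J^{(n)}))\bigr]=\int_{(\beta_j,c)}\bigl(N_J-N_{J^{(n)}}\bigr)\,df .
\]
Thus the hypothesis reads $\bigl|\int_{(\beta_j,c)}(N_J-N_{J^{(n)}})\,df\bigr|\le S$ for all $n$, and the goal \eqref{3.4} is $\int_{(\beta_j,c)}N_J\,df<\infty$.

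The key input is a bound on $N_{J^{(n)}}$ by $N_J$ that is \emph{uniform} in $n$. Decomposing $\ell^2=\ell^2(\{1,\dots,n\})\oplus\ell^2(\{n+1,n+2,\dots\})$ we have $J=(J_n\oplus J^{(n)})+\Gamma_n$, where $J_n$ is the upper‑left $n\times n$ block and $\Gamma_n=a_n\bigl(|\delta_n\rangle\langle\delta_{n+1}|+|\delta_{n+1}\rangle\langle\delta_n|\bigr)$ has rank $2$. Since a rank‑$r$ perturbation alters the number of eigenvalues in a gap of the essential spectrum by at most $r$, and since $0\le N_{J_n}\le n$, this yields
\[
N_{J^{(n)}}(t)\le N_J(t)+2,\qquad \bigl|N_J(t)-N_{J^{(n)}}(t)\bigr|\le n+2\qquad(t\in(\beta_j,c)),
\]
the second bound being what justified dominated convergence above. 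The first bound gives $(N_{J^{(n)}}-N_J)^+\le 2$, hence $\int_{(\beta_j,c)}(N_{J^{(n)}}-N_J)^+\,df\le 2F$; combining with $\int(N_J-N_{J^{(n)}})\,df\le S$ we obtain the uniform estimate
\[
\int_{(\beta_j,c)}\bigl(N_J(t)-N_{J^{(n)}}(t)\bigr)^+\,df(t)\le S+2F\qquad\text{for every }n .
\]
Because $N_{J^{(n)}}(t)=0$ for $t$ at or above $x_1(J^{(n)})$ (the top eigenvalue of $J^{(n)}$ in $(\beta_j,c)$, or $\beta_j$ if there is none), this says $\int_{[x_1(J^{(n)}),c)}N_J\,df\le S+2F$ for all $n$.

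Hence it suffices to prove $\inf_n x_1(J^{(n)})=\beta_j$: picking $n_k$ with $x_1(J^{(n_k)})\to\beta_j$ and letting $k\to\infty$, monotone convergence gives $\int_{(\beta_j,c)}N_J\,df=\lim_k\int_{[x_1(J^{(n_k)}),c)}N_J\,df\le S+2F<\infty$, which is \eqref{3.4}. I expect this final step to be the crux. If $\inf_n x_1(J^{(n)})=\beta_j+\delta>\beta_j$, then every $J^{(n)}$ carries an eigenvalue $z_n\in[\beta_j+\delta,c]$; passing to a subsequence along which the (uniformly bounded) shifted Jacobi parameters converge produces a two‑sided right limit $J_r$ of $J$ with $z_n\to z_\ast\in[\beta_j+\delta,c]$. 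As the $z_n$ stay in a compact subset of the gap, the associated normalized eigenvectors decay exponentially at a rate uniform in $n$, carrying only a controlled amount of mass; the clean outcome would be $z_\ast\in\sigma(J_r)$, which is impossible since $\sigma(J_r)\subseteq\sigma_\ess(J)=\fre$ by the Last--Simon description of $\sigma_\ess$ through right limits. The genuine obstacle is the scenario in which these eigenvectors remain pinned against the stripped left endpoint of $J^{(n)}$, so that in the limit one only sees an eigenvalue of a \emph{half‑line} right limit of $J$, whose discrete spectrum need not lie in $\fre$; excluding this must use the standing assumption that $J$ has eigenvalues accumulating at $\beta_j$, which restricts the possible tails of $J$. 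Once that is settled, the theorem follows as above.
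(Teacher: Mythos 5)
Your layer-cake reformulation and the uniform rank bound $N_{J^{(n)}}(t)\le N_J(t)+\mathrm{const}$ are sound, and they correctly deliver $\int_{[x_1(J^{(n)}),c)}N_J\,df\le S+2F$ for all $n$. The gap is exactly where you suspect it is, but it is worse than an unfinished step: the claim $\inf_n x_1(J^{(n)})=\beta_j$ is \emph{false} in general, even under \eqref{3.3}. By the Denisov--Rakhmanov--Remling theorem the right limits of $J$ lie in the isospectral torus, and the half-line restriction of such a limit can carry a single eigenvalue at a point $\lambda_0$ deep inside the gap; for a (say periodic) torus element whose shifted Dirichlet data all stay in a compact subset of $(\beta_j,\alpha_{j+1})$, every $J^{(n)}$ for large $n$ then has its top eigenvalue near $\lambda_0$, while a slowly decaying perturbation of the tail is perfectly compatible with $J$ itself having infinitely many eigenvalues accumulating at $\beta_j$ and with \eqref{3.3} and \eqref{3.4} both holding. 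So the scenario you call ``the genuine obstacle'' --- an eigenvalue of a half-line right limit that does not lie in $\fre$ --- really occurs and cannot be excluded by the accumulation hypothesis.

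The fix is the paper's Theorem~\ref{T3.4} (proved from Lemma~\ref{L3.3} by writing $(J^{(n)}-\gamma)^2$ as a rank-one perturbation of $P_n(J-\gamma)^2P_n$): for every $\veps>0$ and $n$ large, $J^{(n)}$ has \emph{at most one} eigenvalue in $(\beta_j+\veps,\alpha_{j+1}-\veps)$. Thus it is the \emph{second} eigenvalue $x_2(J^{(n)})$, not the first, that is forced down to $\beta_j$. In your language: on $[x_2(J^{(n)}),c)$ one has $N_{J^{(n)}}\le 1$, so $\int_{[\beta_j+\veps,c)}(N_J-1)^+\,df\le S+2F$ for all large $n$, and letting $\veps\downarrow 0$ gives $\int_{(\beta_j,c)}N_J\,df\le S+3F<\infty$. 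The paper's own proof is this same idea run directly on the sums: it allows the one rogue eigenvalue of $J^{(n)}$ to contribute an additive error $f(c)$ (inequality \eqref{3.5}), uses the decomposition of the decoupling as a positive plus a negative rank-one perturbation to get $x_k(J)>x_k(J^{(n)})$ for $k\ge2$ (playing the role of your positivity of $(N_J-N_{J^{(n)}})^+$ on the relevant range), and then sends $\veps\downarrow 0$. With Theorem~\ref{T3.4} inserted in place of your final step, your argument closes and is essentially equivalent to the paper's.
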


\begin{proof} We will need the fact proven below (in Theorem~\ref{T3.4}) that for each $j\in\{1,\dots,\ell\}$ and $\veps >0$, there is an $N$ so for $n\geq N$, $J^{(n)}$ has
either $0$ or $1$ eigenvalue in $(\beta_j+\veps, \alpha_{j+1}-\veps)$.

So for $n\geq N$ we may have $x_1(J^{(n)})>\beta_j+\veps$, but
$x_k(J^{(n)})\leq \beta_j+\veps$ for all $k\geq 2$. Hence, for $n\geq N$,
\begin{align} \lb{3.5}
&\sum_{\{k\,\mid\, \beta_j+\veps < x_k(J) < c\}} [f(x_k(J))-f(\beta_j+\veps)] \no \\
&\quad\qquad \leq f(c)+\sum_{\{k\,\mid\, \beta_j+\veps < x_k(J) < c\}} [f(x_k(J))-f(x_k(J^{(n)}))]
%\leq f(c)+S
\end{align}
Recall now that $J^{(n)}$ can be obtained by decoupling $J$ with a rank 2 perturbation
(which is the sum of a positive and a negative rank 1 perturbation) and removing the finite block.
Therefore, if we pick $\veps>0$ so small that $x_3(J)>\beta_j+\veps$, it follows that $x_k(J)>x_k(J^{(n)})$
for all $k\geq 2$ (when $n\geq N$). This implies that
\begin{align}
\sum_{\{k\,\mid\, \beta_j+\veps < x_k(J) < c\}} [f(x_k(J))-f(x_k(J^{(n)}))]\leq S
\end{align}
So, for sufficiently small $\veps_0$ and $\veps_1 <\veps_0$,
\begin{equation} \lb{3.6}
\sum_{\{k\,\mid\, \beta_j+\veps_0 < x_k(J) < c\}} [f(x_k(J))-f(\beta_j+\veps_1)] \leq f(c)+S
\end{equation}
Taking $\veps_1\downarrow 0$ and then $\veps_0\downarrow 0$ yields \eqref{3.4}.
\end{proof}

The following lemma is well known, used for example in Denisov \cite{DenPAMS}:

\begin{lemma} \lb{L3.3} Let $A$ be a bounded operator with
\begin{equation} \lb{3.7}
\gamma =\inf (\sigma_\ess(A))
\end{equation}
Let $P_n$ be a family of orthogonal projections with
\begin{equation} \lb{3.8}
\slim P_n=0
\end{equation}
Then for any $\veps$, we can find $N$ so that for $n\geq N$,
\begin{equation} \lb{3.9}
\sigma(P_n AP_n\restriction \ran(P_n)) \subset [\gamma-\veps,\infty)
\end{equation}
\end{lemma}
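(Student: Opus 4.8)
The plan is to argue by contradiction, extracting an approximate eigenvector that lives on the "wrong" side of $\gamma$ but is pushed toward the bottom of the spectrum by the fact that $P_n\to 0$ strongly, contradicting the characterization of $\gamma$ as the bottom of the essential spectrum. Suppose the conclusion fails: then there is an $\veps>0$ and a subsequence (which I will relabel as $n$) together with unit vectors $\psi_n\in\ran(P_n)$ so that $\langle \psi_n, A\psi_n\rangle = \langle \psi_n, P_nAP_n\psi_n\rangle < \gamma-\veps$. The key point is that $\gamma=\inf\sigma_\ess(A)$ means that the spectral subspace $\calH_- = \ran(E_A((-\infty,\gamma-\veps/2)))$ is finite-dimensional; write $Q$ for the (finite-rank) orthogonal projection onto $\calH_-$.

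First I would use $\langle\psi_n,A\psi_n\rangle<\gamma-\veps$ to show that $\psi_n$ is asymptotically concentrated in $\calH_-$. Indeed, writing $\psi_n = Q\psi_n + (1-Q)\psi_n$ and using $A \geq (\gamma-\veps/2)(1-Q) + (\text{something bounded below})Q$ via the spectral theorem, one gets
\[
\gamma-\veps > \langle\psi_n, A\psi_n\rangle \geq (\gamma-\tfrac{\veps}{2})\,\norm{(1-Q)\psi_n}^2 + c\,\norm{Q\psi_n}^2
\]
for a fixed constant $c = \inf\sigma(A)$; since $\norm{(1-Q)\psi_n}^2 = 1 - \norm{Q\psi_n}^2$, rearranging yields a lower bound $\norm{Q\psi_n}^2 \geq \delta > 0$ for some $\delta$ independent of $n$. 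Next I would invoke the hypothesis $\slim P_n = 0$: because $Q$ is finite rank, $\norm{QP_n} = \norm{(P_nQ)^*} \to 0$, so $\norm{Q\psi_n} = \norm{QP_n\psi_n} \leq \norm{QP_n} \to 0$. This contradicts $\norm{Q\psi_n}^2\geq\delta>0$, and the lemma follows.

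I expect the only mild subtlety is making the spectral-calculus inequality in the second paragraph clean — one should note $A$ is self-adjoint here (it is applied to Jacobi matrices), so $\gamma$ is real and the spectral projections $E_A(\cdot)$ make sense, and finiteness of $\dim\calH_-$ is exactly the statement that $[\gamma-\veps/2,\infty)$ contains $\sigma_\ess(A)$, i.e.\ only finitely many eigenvalues (with multiplicity) lie below it. The strong-to-norm upgrade $\norm{QP_n}\to 0$ for finite-rank $Q$ is standard: it reduces to $\norm{P_n\varphi_i}\to 0$ for each vector $\varphi_i$ in a finite orthonormal basis of $\ran Q$. Neither step presents a real obstacle; the argument is essentially the standard Weyl-sequence/min-max manipulation, just packaged for a moving family of projections.
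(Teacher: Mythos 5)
Your argument is correct and rests on exactly the same two ingredients as the paper's proof: the spectral subspace below $\gamma-\veps/2$ is finite-dimensional (the paper packages this as a decomposition $A=A_\veps+B_\veps$ with $A_\veps\geq\gamma-\veps/2$ and $B_\veps$ finite rank), and a finite-rank operator composed with $P_n$ tends to zero in norm when $\slim P_n=0$. The paper runs the estimate directly ($P_nAP_n\geq(\gamma-\veps)P_n$) rather than by contradiction, but that is a cosmetic difference.
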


\begin{proof} Since \eqref{3.7} holds, for any $\veps$, we can write
\begin{equation} \lb{3.10}
A=A_\veps + B_\veps
\end{equation}
where $A_\veps \geq \gamma -\veps/2$ and $B_\veps$ is finite rank, and so compact.

By \eqref{3.8}, $P_n B_\veps P_n\to 0$ in $\norm{\cdot}$, so we can find $N$ so that, for $n\geq N$,
$\norm{P_n B_\veps P_n}\leq \veps/2$. Then for each $n\geq N$,
\begin{equation} \lb{3.11}
P_n AP_n \geq P_n \biggl(\gamma - \f{\veps}{2} -\f{\veps}{2}\biggr) P_n \geq (\gamma-\veps)P_n
\end{equation}
proving \eqref{3.9}.
\end{proof}

\begin{theorem}\lb{T3.4} Let $J$ be a bounded Jacobi matrix with $(\alpha,\beta)\cap \sigma_\ess (J)=\emptyset$.
Let $J^{(n)}$ be the $n$-times stripped Jacobi matrix. Then for any $\veps$, we can find $N$\! so that, for
$n\geq N$, $J^{(n)}$ has at most one eigenvalue in $(\alpha+\veps,\beta-\veps)$.
\end{theorem}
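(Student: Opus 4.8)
The plan is to compare $J^{(n)}$ with a compression of a fixed polynomial in $J$ and then run a two-dimensional min--max argument; the rank-one discrepancy between $g(J^{(n)})$ and the analogous compression of $g(J)$ is exactly what allows one eigenvalue --- but no more --- to survive in the gap. We may assume $\veps<\beta-\alpha$, since otherwise $(\alpha+\veps,\beta-\veps)=\emptyset$. Put $g(x)=(x-\alpha)(\beta-x)$ and $C=g(J)=(J-\alpha)(\beta-J)$, a bounded self-adjoint operator. Two elementary facts drive the proof: (i) $g$ is concave with $g(\alpha+\veps)=g(\beta-\veps)=\eta$, where $\eta:=\veps(\beta-\alpha-\veps)>0$, so $g(x)\ge\eta$ on $[\alpha+\veps,\beta-\veps]$; and (ii) $\sigma_\ess(C)\subseteq(-\infty,0]$. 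For (ii) one can cite spectral mapping, $\sigma_\ess(C)=g(\sigma_\ess(J))$, together with $g\le 0$ on $\bbR\setminus(\alpha,\beta)\supseteq\sigma_\ess(J)$; alternatively write $-C=K+D$ with $D=(-g(J))\bdone_{\bbR\setminus(\alpha,\beta)}(J)\ge 0$ and $K=(-g(J))\bdone_{(\alpha,\beta)}(J)$ compact (the eigenvalues of $J$ in $(\alpha,\beta)$ have finite multiplicity and $-g$ vanishes at the only possible accumulation points $\alpha,\beta$). Either way $\inf\sigma_\ess(-C)=-\sup\sigma_\ess(C)\ge 0$.

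Next I would record the compression identity. Let $P_n$ be the orthogonal projection of $\ell^2(\{1,2,\dots\})$ onto $\overline{\spann}\{\delta_{n+1},\delta_{n+2},\dots\}$, so that $\slim P_n=0$, and identify $\ran(P_n)$ with $\ell^2(\{1,2,\dots\})$ in such a way that $J^{(n)}=P_nJP_n\restriction\ran(P_n)$ (this is just tridiagonality). Since $J$ is tridiagonal, the only discrepancy between $P_nJ^2P_n\restriction\ran(P_n)$ and $(J^{(n)})^2$ comes from the bond across site $n$, giving $P_nJ^2P_n\restriction\ran(P_n)=(J^{(n)})^2+a_n^2Q_n$, where $Q_n$ is the rank-one projection onto $\delta_{n+1}\in\ran(P_n)$. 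As $g$ is quadratic and $P_nJP_n\restriction\ran(P_n)=J^{(n)}$ exactly, this yields
\[
g(J^{(n)})=P_nCP_n\restriction\ran(P_n)+a_n^2\,Q_n ,
\]
equivalently $\jap{v,g(J^{(n)})v}=\jap{v,P_nCP_nv}+a_n^2\abs{\jap{\delta_{n+1},v}}^2$ for $v\in\ran(P_n)$.

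Now apply Lemma~\ref{L3.3} to $A=-C$ with the projections $P_n$: since $\inf\sigma_\ess(-C)\ge 0$, there is $N$ so that for $n\ge N$ one has $\sigma\bigl(P_n(-C)P_n\restriction\ran(P_n)\bigr)\subseteq[-\eta/2,\infty)$, i.e. $\jap{v,P_nCP_nv}\le\tfrac{\eta}{2}\norm{v}^2$ for all $v\in\ran(P_n)$. Suppose, for a contradiction, that for some $n\ge N$ the matrix $J^{(n)}$ had two orthonormal eigenvectors $u_1,u_2$ with eigenvalues $E_1,E_2\in(\alpha+\veps,\beta-\veps)$ (counting multiplicity), and let $V=\spann\{u_1,u_2\}$. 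On $V$, fact (i) gives $\jap{v,g(J^{(n)})v}=\sum_i g(E_i)\abs{\jap{u_i,v}}^2\ge\eta\norm{v}^2$. The single linear condition $\jap{\delta_{n+1},v}=0$ cuts $V$ down to a subspace $W$ with $\dim W\ge 1$ on which the rank-one term $a_n^2Q_n$ drops out, so for any nonzero $v\in W$,
\[
\eta\norm{v}^2\le\jap{v,g(J^{(n)})v}=\jap{v,P_nCP_nv}\le\tfrac{\eta}{2}\norm{v}^2 ,
\]
contradicting $\eta>0$. Hence for $n\ge N$, $J^{(n)}$ has at most one eigenvalue in $(\alpha+\veps,\beta-\veps)$.

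The point that needs care is the bookkeeping in the compression identity: it is precisely the term $a_n^2Q_n$ that separates this ``at most one'' statement from the false ``eventually none'' statement, so it must be kept and then removed by exactly one linear condition. The only other mild subtlety is justifying $\sigma_\ess(C)\subseteq(-\infty,0]$ without assuming finitely many eigenvalues in the gap, which the compact-plus-nonnegative splitting of $-C$ handles cleanly.
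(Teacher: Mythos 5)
Your proof is correct and is essentially the paper's argument: your $C=g(J)$ with $g(x)=(x-\alpha)(\beta-x)$ is just an affine transform of the paper's $A=(J-\gamma)^2$ (indeed $C=[\tfrac12(\beta-\alpha)]^2-A$), and the structure --- Lemma~\ref{L3.3} applied to the compressed quadratic, the rank-one discrepancy $a_n^2Q_n$ between $g(J^{(n)})$ and $P_ng(J)P_n$, and a min--max/dimension count to conclude ``at most one'' --- is the same. You merely spell out the two-dimensional variational step that the paper leaves implicit.
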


\begin{proof} Let $P_n$ be the projection onto $\spann\{\delta_j\}_{j=n+1}^\infty$, so
\begin{equation} \lb{3.12}
J^{(n)} = P_n JP_n\restriction\ran(P_n)
\end{equation}
Let $\gamma=\f12(\alpha+\beta)$ and $A=(J-\gamma)^2$, $A^{(n)}=P_n AP_n\restriction\ran(P_n)$. By the
spectral mapping theorem,
\begin{equation} \lb{3.13}
\inf(\sigma_\ess (A))\geq [\tfrac12\, (\beta-\alpha)]^2
\end{equation}
so, by the lemma, for any $\veps'$, there is $N$ so for $n\geq N$,
\begin{equation} \lb{3.14}
\inf \sigma(A^{(n)}) \geq [\tfrac12\, (\beta-\alpha)]^2 -\veps' = [\tfrac12\, (\beta-\alpha)-\veps]^2
\end{equation}
where $\veps'$ is chosen so that \eqref{3.14} holds.

Since
\begin{equation} \lb{3.15}
A^{(n)} -(J^{(n)}-\gamma)^2 = P_n(J-\gamma)(1-P_n)(J-\gamma)P_n
\end{equation}
is rank one, $(J^{(n)}-\gamma)^2$ has at most one eigenvalue (which is simple) below $[\f12(\beta-\alpha)-\veps]^2$,
which proves the claimed result by the spectral mapping theorem.
\end{proof}

Next, we turn to estimating eigenvalue sums like
\begin{equation} \lb{3.16}
\calE(J) =\sum_{x\in\sigma(J)\setminus\fre} \dist(x,\fre)^{1/2}
\end{equation}
with a goal of showing, for example, that if $\calE(J)$ is finite, then so is $\sup_n \calE(J^{(n)})$.

\begin{definition} Let $A$ be a bounded selfadjoint operator with $(a,b)\cap\sigma_\ess(A)=\emptyset$.
We set
\begin{equation} \lb{3.17}
\Sigma_{(a,b)}(A) =\sum_{x\in\sigma(A)\cap(a,b)} \dist(x,\bbR\setminus(a,b))^{1/2}
\end{equation}
where the sum includes $x$ as many times as the multiplicity of that eigenvalue.
\end{definition}

\begin{theorem}\lb{T3.5} Let $A$ be a bounded selfadjoint operator with $(a,b)\cap\sigma_\ess(A)=\emptyset$
and $\Sigma_{(a,b)}(A)<\infty$. Then
\begin{SL}
\item[{\rm{(i)}}] If $B$ is another bounded selfadjoint operator with $\rank(B-A)=r <\infty$, then
\begin{equation} \lb{3.18}
\Sigma_{(a,b)}(B) \leq \Sigma_{(a,b)}(A) + r\biggl( \f{b-a}{2}\biggr)^{1/2}
\end{equation}

\item[{\rm{(ii)}}] If $P$ is an orthogonal projection so that $\rank(P\!A(1-P))=r<\infty$ and $B=P\!AP\restriction\ran(P)$,
then \eqref{3.18} holds.
\end{SL}
\end{theorem}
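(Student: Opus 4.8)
The plan is to reduce both parts to a single pointwise inequality for the eigenvalue counting function inside the gap, and then integrate it against a layer‑cake formula. For a bounded selfadjoint $T$ with $(a,b)\cap\sigma_\ess(T)=\emptyset$ and an open interval $J$ with $\ol J\subset(a,b)$, write $N_T(J)=\dim\ran E_T(J)$ for the number of eigenvalues of $T$ in $J$ counted with multiplicity, which is finite since $\ol J$ misses the essential spectrum. Since for $x\in(a,b)$ one has $\dist(x,\bbR\setminus(a,b))^{1/2}=\min(x-a,b-x)^{1/2}=\int_0^M\bdone[a+t^2<x<b-t^2]\,dt$ with $M=((b-a)/2)^{1/2}$, Tonelli gives
\[
\Sigma_{(a,b)}(T)=\int_0^M N_T\big((a+t^2,b-t^2)\big)\,dt .
\]
Hence it suffices to prove, for every open $J$ with $\ol J\subset(a,b)$, the bound $N_B(J)\le N_A(J)+r$; integrating this over $t\in(0,M)$ then yields $\Sigma_{(a,b)}(B)\le\Sigma_{(a,b)}(A)+rM$, i.e. \eqref{3.18}. (Here one uses that $(a,b)$ is also a gap in $\sigma_\ess(B)$: in case (i) $B-A$ is finite rank, and in case (ii) $B\oplus\big((1-P)A(1-P)\restriction\ran(1-P)\big)$ is a rank $\le 2r$ perturbation of $A$, so $\sigma_\ess(B)\subseteq\sigma_\ess(A)$.)

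The pointwise bound follows from the following observation. Suppose $\mathcal U$ is a closed subspace on which $A$ and $B$ agree, i.e. $A\psi=B\psi$ for all $\psi\in\mathcal U$: in case (i) take $\mathcal U=\ker(B-A)$, of codimension $r$ in $\mathcal H$; in case (ii) read $B\psi$ as $PAP\psi$ and take $\mathcal U=\{\psi\in\ran P:(1-P)A\psi=0\}$, which has codimension $\rank((1-P)AP)=\rank(PA(1-P))=r$ inside $\ran P$. Fix $J$ with $\ol J\subset(a,b)$, let $m$ be its midpoint and $\ell$ its half‑length, and set $\mathcal G=\ran E_B(J)$. Then $\mathcal G$ is finite dimensional, $B$‑invariant, and $\sigma(B\restriction\mathcal G)$ is a finite subset of the open interval $J$, so $\rho:=\max_{\lambda\in\sigma(B\restriction\mathcal G)}|\lambda-m|<\ell$. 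Since $\mathcal G$ lies in $\ran P$ (case (ii)) or in $\mathcal H$ (case (i)), $\mathcal G':=\mathcal G\cap\mathcal U$ satisfies $\dim\mathcal G'\ge\dim\mathcal G-r=N_B(J)-r$. For $\psi\in\mathcal G'$ we have $A\psi=B\psi$, hence $\langle A\psi,\psi\rangle=\langle B\psi,\psi\rangle$ and $\langle A^2\psi,\psi\rangle=\|A\psi\|^2=\|B\psi\|^2=\langle B^2\psi,\psi\rangle$; expanding $\langle(A-\mu)^2\psi,\psi\rangle$ gives
\[
\|(A-\mu)\psi\|=\|(B-\mu)\psi\|\qquad\text{for all }\mu\in\bbR,\ \psi\in\mathcal G'.
\]
Taking $\mu=m$ and using $\psi\in\mathcal G=\ran E_B(J)$ gives $\|(A-m)\psi\|=\|(B-m)\psi\|\le\rho\|\psi\|$. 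If $E_A(J)\psi=0$ for some $\psi\in\mathcal G'$, then $\psi\in\ran E_A(\bbR\setminus J)$, whence $\|(A-m)\psi\|\ge\ell\|\psi\|$; together with the previous line and $\rho<\ell$ this forces $\psi=0$. Thus $E_A(J)$ is injective on $\mathcal G'$, so $N_A(J)=\dim\ran E_A(J)\ge\dim\mathcal G'\ge N_B(J)-r$, as desired.

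I expect the substantive point to be the identity $\|(A-\mu)\psi\|=\|(B-\mu)\psi\|$ on $\mathcal G'$, and the realization that it is what delivers the sharp constant $r$. Indeed, the obvious reduction of (ii) to (i) — apply (i) to the rank $\le 2r$ perturbation $A\rightsquigarrow PAP\oplus\big((1-P)A(1-P)\restriction\ran(1-P)\big)$ and note that its $\Sigma_{(a,b)}$ dominates $\Sigma_{(a,b)}(B)$ — only gives the weaker bound with $2r$ in place of $r$, and a naive min--max argument for the count in a two‑sided gap tends to lose the same factor of two. Everything else — the layer‑cake identity, finiteness of the spectral projections on intervals compactly inside the gap, and the strictness $\rho<\ell$ — is routine.
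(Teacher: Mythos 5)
Your proof is correct, and it takes a genuinely different route from the paper's. The paper proves (i) by induction on $r$ via the strict interlacing of eigenvalues under rank-one perturbations (yielding a shifted labeling $x_k(A)\le x_{k+1}(B)\le x_{k+1}(A)$ whose telescoping leaves exactly one boundary term $\le((b-a)/2)^{1/2}$), and proves (ii) by a separate squaring trick: after normalizing $b=-a=1$ it introduces $\wti\Sigma(C)=\sum_{x\in\sigma(C)\cap[0,1)}(1-\sqrt{x})^{1/2}$ with $\Sigma_{(-1,1)}(A)=\wti\Sigma(A^2)$, uses that $PA^2P-(PAP)^2=PA(1-P)AP$ has rank $\le r$, and exploits min--max monotonicity of $\wti\Sigma$ under compression of the \emph{positive} operator $A^2$ --- this is precisely how the paper converts the two-sided gap into a one-sided problem and avoids the factor of $2$ that you correctly identify as the pitfall of the naive reduction of (ii) to (i). You instead reduce both parts to the single counting bound $N_B(J)\le N_A(J)+r$ for intervals $\ol J\subset(a,b)$ via the layer-cake identity $\Sigma_{(a,b)}(T)=\int_0^{M}N_T((a+t^2,b-t^2))\,dt$, and prove that bound by restricting to a codimension-$r$ subspace on which $A$ and $B$ literally agree (so that $(A-m)\psi=(B-m)\psi$) and checking that $E_A(J)$ is injective there. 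This handles (i) and (ii) uniformly, needs no induction on $r$ and no squaring, and achieves the sharp constant $r$ in case (ii) for the same underlying reason the paper does (the relevant defect space has dimension $r$, not $2r$); all the supporting details you supply --- finiteness of $N_T(J)$, essential spectra under finite-rank perturbation and compression, the strict inequality $\rho<\ell$, and the codimension counts --- check out. What the paper's route buys instead is the explicit interlacing structure, which it reuses elsewhere (e.g.\ Theorem~\ref{T3.1}); your route more directly yields the localized versions such as \eqref{3.27}, since the counting inequality applies verbatim to subintervals of the gap.
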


\begin{proof} For simplicity of notation, we can suppose $A$ has both $a$ and $b$ as limit points of eigenvalues
(from above and below, respectively). It is easy to modify the arguments if there are only finitely many eigenvalues.

\smallskip
(i) By induction, it suffices to prove this for $r=1$. Label the eigenvalues of $A$ in $(a,b)$, counting multiplicity,
by
\begin{equation} \lb{3.19}
a< \cdots \leq x_{-2}(A) \leq x_{-1}(A) < \tfrac12\, (a+b) \leq x_0(A)\leq x_1(A) \leq \cdots < b
\end{equation}
For $A$'s with a cyclic vector $\varphi$, and $B=A+\lambda(\varphi,\dott)\varphi$, it is well known that eigenvalues
of $A$ and $B$ strictly interlace. By writing $A$ as a direct sum of its restriction to the cyclic subspace for
$\varphi$ and the restriction to the orthogonal complement, we can label all the eigenvalues of $B$ in such
a way that
\begin{equation} \lb{3.20}
x_k(A)\leq x_{k+1}(B) \leq x_{k+1}(A)
\end{equation}

With that labeling,
\begin{align}
\sum_{k=1}^\infty \dist(x_k(B),\bbR\setminus(a,b))^{1/2}
&\leq \sum_{k=0}^\infty \dist(x_k(A),\bbR\setminus (a,b))^{1/2} \\
\sum_{k=1}^{\infty} \dist(x_{-k}(B),\bbR\setminus(a,b))^{1/2}
&\leq \sum_{k=1}^{\infty} \dist(x_{-k}(A),\bbR\setminus (a,b))^{1/2}
\end{align}
so that
\begin{equation} \lb{3.21}
\Sigma_{(a,b)}(B) \leq \dist(x_0(B),\bbR\setminus (a,b))^{1/2} + \Sigma_{(a,b)}(A)
\end{equation}
which implies \eqref{3.18} for $r=1$.

\smallskip
(ii) By scaling and adding a constant to $A$, we can suppose $b=-a=1$. For $C\geq0$ with $\sigma_\ess(C)\subset[1,\norm{C}]$,
let
\begin{equation} \lb{3.22}
\wti\Sigma(C)=\sum_{x\in\sigma(C)\cap [0,1)} \bigl( 1-\sqrt{x}\bigr)^{1/2}
\end{equation}
so that
\begin{equation} \lb{3.23}
\Sigma_{(-1,1)}(A) =\wti\Sigma(A^2)
\end{equation}

By mimicking the proof of (i), we see
\begin{equation} \lb{3.24}
\rank(D-C)=r,\, D\geq 0 \,\Rightarrow\, \wti\Sigma(D)\leq\wti\Sigma(C)+r
\end{equation}

Notice, next, that by the min-max principle, $x_k (PCP\restriction\ran(P))\geq x_k(C)$ so that
\begin{equation} \lb{3.25}
\wti\Sigma(PCP\restriction\ran(P))\leq\wti\Sigma(C)
\end{equation}
Notice also that
\begin{equation} \lb{3.26}
P\!A^2\!P-(P\!AP)^2 = P\!A(1-P)AP
\end{equation}
is at most rank $r$. Thus,
\begin{alignat*}{2}
\Sigma_{(-1,1)}(P\!AP\restriction\ran(P))
&=\wti\Sigma ((P\!AP\restriction\ran(P))^2) \qquad && \text{(by \eqref{3.23})} \\
&\leq r + \wti\Sigma (P\!A^2\!P\restriction\ran(P)) && \text{(by \eqref{3.24})} \\
&\leq r + \wti\Sigma(A^2) && \text{(by \eqref{3.25})} \\
&= r+\Sigma_{(-1,1)}(A) &&  \text{(by \eqref{3.23})}
\end{alignat*}
\end{proof}

We also want to know that one can make the eigenvalue sum small, uniformly in $B$, by summing only over
eigenvalues sufficiently near $a$ or $b$. Thus, we prove (for simplicity, we state the result for $a$;
a similar result holds for $b$):

\begin{theorem}\lb{T3.6} Let $(a,b)\cap\sigma_\ess(A)=\emptyset$, $\Sigma_{(a,b)}(A)<\infty$, and suppose $B$ is related to
$A$ as  in either {\rm{(i)}} or {\rm{(ii)}} of Theorem~\ref{T3.5}. Then for any $\delta <\f14(b-a)$,
\begin{equation} \lb{3.27}
\sum_{x_k(B)\in (a,a+\delta)} (x_k(B)-a)^{1/2} \leq r\delta^{1/2} +
\sum_{x_k(A)\in (a,a+2\delta)} (x_k(A)-a)^{1/2}
\end{equation}
\end{theorem}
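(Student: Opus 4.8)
The plan is to reduce the claim to the structure already extracted in the proof of Theorem~\ref{T3.5} and then exploit the interlacing/min-max inequalities there in a \emph{localized} form near the edge $a$. First I would reduce to the case $r=1$ by induction, exactly as in Theorem~\ref{T3.5}(i): a rank-$r$ perturbation is a composition of $r$ rank-one ones, and each step contributes at most $\delta^{1/2}$ to the left-hand side while enlarging the reference window on the right by a controlled amount; one must be slightly careful that the windows telescope correctly, but the passage from $(a,a+\delta)$ to $(a,a+2\delta)$ gives enough room to absorb all $r$ steps at once (this is the reason for the factor $2$ rather than $r+1$ on the right). For case (ii), I would use the same device as in Theorem~\ref{T3.5}(ii): after an affine change of variables putting $(a,b)=(-1,1)$, pass to $A^2$ and the functional $\wti\Sigma$, note $P A^2 P - (PAP)^2$ is rank $r$, and use the min-max inequality $x_k(PCP\restriction\ran(P)) \ge x_k(C)$, which only \emph{pushes eigenvalues toward the edge} $1$ in the squared picture (i.e.\ toward $a$ and $b$ in the original picture), so a tail sum near the edge can only shrink.

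The heart of the matter is the rank-one, window-localized interlacing estimate. With the labeling $x_k(A) \le x_{k+1}(B) \le x_{k+1}(A)$ from \eqref{3.20} (for the branch of eigenvalues accumulating at $a$, i.e.\ the $x_{-k}$'s in the notation of \eqref{3.19}, reindexed to increase toward $a$), each $x_k(B) \in (a, a+\delta)$ is sandwiched between two consecutive $A$-eigenvalues, at least one of which — the larger one, $x_k(A)$ in that labeling — lies in $(a, a+\delta) \subset (a, a+2\delta)$; the possible exception is the single extreme $B$-eigenvalue closest to $a$, whose distance to $a$ is at most $\delta$ (since it lies in $(a,a+\delta)$), and that accounts for the $r\delta^{1/2}$ term when $r=1$. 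Summing $\dist(\cdot,a)^{1/2} = (\,\cdot - a)^{1/2}$ over the matched pairs and using monotonicity of $t \mapsto t^{1/2}$ then yields \eqref{3.27} for $r=1$. I would write this out using the $\wti\Sigma$-formalism so that (i) and (ii) are handled simultaneously, as in the proof of Theorem~\ref{T3.5}.

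The main obstacle I anticipate is purely bookkeeping rather than conceptual: matching up the index shift in \eqref{3.20} with the geometric window $(a, a+\delta)$ versus $(a, a+2\delta)$ so that exactly the right eigenvalues are captured, and making sure that in the rank-$r$ induction the "leaked" edge eigenvalue at each of the $r$ stages is still controlled by $\delta^{1/2}$ (not by something that grows along the induction) while the reference window never has to be opened past $a+2\delta$. The condition $\delta < \tfrac14(b-a)$ is what guarantees $(a,a+2\delta)$ stays within the spectral-gap region where $\Sigma_{(a,b)}$ is the relevant quantity and the one-sided labeling of eigenvalues accumulating at $a$ makes sense; I would invoke it at exactly that point. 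Everything else — the affine normalization, the square trick, the min-max inequality — is already in place from Theorem~\ref{T3.5} and needs only to be cited.
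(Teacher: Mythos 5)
The paper's proof is a three\-line reduction that your proposal misses entirely: since $\delta<\f14(b-a)$, the interval $(a,a+2\delta)$ is itself an interval disjoint from $\sigma_\ess(A)$ with $\Sigma_{(a,a+2\delta)}(A)\leq\Sigma_{(a,b)}(A)<\infty$, so one simply applies Theorem~\ref{T3.5} \emph{with $(a,b)$ replaced by $(a,a+2\delta)$}. Its half-length is $\delta$, so $\Sigma_{(a,a+2\delta)}(B)\leq\Sigma_{(a,a+2\delta)}(A)+r\delta^{1/2}$; the left side of \eqref{3.27} is $\leq\Sigma_{(a,a+2\delta)}(B)$ because for $x\in(a,a+\delta)$ one has $\dist(x,\bbR\setminus(a,a+2\delta))=x-a$, and $\Sigma_{(a,a+2\delta)}(A)$ is $\leq$ the sum on the right of \eqref{3.27}. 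No new interlacing or min-max argument is needed; the ``localization'' is achieved by shrinking the reference interval, not by localizing inside the proof of Theorem~\ref{T3.5}. This is also exactly why the right-hand window is $(a,a+2\delta)$ and the constant is $r\delta^{1/2}=r\bigl(\f{2\delta}{2}\bigr)^{1/2}$.

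Your case (i) plan (a windowed interlacing/matching argument) can be pushed through, but your case (ii) plan contains a genuine gap. Normalizing the \emph{original} interval to $(-1,1)$ and passing to $A^2$ identifies neighborhoods of $a$ and of $b$, so an $A$-eigenvalue matched by min-max to a $B$-eigenvalue near $a$ may sit near $b$ and hence not appear on the right of \eqref{3.27} at all. Worse, the inference ``min-max pushes eigenvalues toward the edge, so a tail sum near the edge can only shrink'' is backwards for a \emph{windowed} sum: the global quantity $\wti\Sigma$ decreases because each individual term $(1-\sqrt{x_k})^{1/2}$ decreases, but precisely because compression moves eigenvalues toward the edges, eigenvalues of $A$ lying outside $(a,a+2\delta)$ (even at the center of the gap) can migrate into $(a,a+\delta)$ for $PAP$, adding terms to the left side of \eqref{3.27} that have no counterpart on the right. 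That the number of such migrants is controlled by $r$ is exactly the content of the rank argument applied to the sub-interval $(a,a+2\delta)$ --- i.e., of Theorem~\ref{T3.5} for that interval --- and cannot be extracted from min-max on $(a,b)$ alone. If you replace your localization scheme by the observation that Theorem~\ref{T3.5} may simply be invoked for the interval $(a,a+2\delta)$, both cases are handled at once and the proof collapses to the paper's.
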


\begin{proof} We have
\begin{align*}
\text{LHS of \eqref{3.27}}
&\leq \Sigma_{(a,a+2\delta)}(B) \\
&\leq \Sigma_{(a,a+2\delta)}(A) + r\delta^{1/2} \qquad \text{(by Theorem~\ref{T3.5})} \\
&=\text{RHS of \eqref{3.27}}
\qedhere
\end{align*}
\end{proof}

As a corollary, we have (since $J^{(n)}=P_n J P_n\restriction\ran(P_n)$ with $\rank((1-P_n)JP_n)=1$):

\begin{theorem}\lb{T3.7} Let $J$ be a Jacobi matrix with $\sigma_\ess(J)=\fre$. Given \eqref{3.16}, let $\calE(J)$
be finite and let $J^{(n)}$ be the $n$-times stripped Jacobi matrix. Then
\begin{SL}
\item[{\rm{(i)}}]
\begin{equation} \lb{3.28}
\calE(J^{(n)}) \leq\calE(J) + \ell \max_{j=1, \dots, \ell} \left(\tfrac12\, \abs{\alpha_{j+1}-\beta_j}\right)^{1/2}
\end{equation}

\item[{\rm{(ii)}}] For any $j\in\{1, \dots, \ell+1\}$ and $\veps>0$, there is a $\delta>0$ so that for all $n$,
\begin{align}
\sum_{x_k(J^{(n)})\in (\beta_j,\beta_j+\delta)} (x_k(J^{(n)}) - \beta_j)^{1/2} &\leq \tfrac12\, \veps \lb{3.29} \\
\sum_{x_k(J^{(n)})\in (\alpha_{j}-\delta,\alpha_{j})} (\alpha_{j} - x_k(J^{(n)}))^{1/2}
&\leq \tfrac12\, \veps \lb{3.30}
\end{align}
\end{SL}
\end{theorem}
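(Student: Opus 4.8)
The statement is a bookkeeping corollary of Theorems~\ref{T3.5} and \ref{T3.6}. Throughout, let $P_n$ be the projection onto $\spann\{\delta_j\}_{j\geq n+1}$, so that $J^{(n)}=P_nJP_n\restriction\ran(P_n)$ and, since $J$ is tridiagonal, $\rank((1-P_n)JP_n)=1$; thus $J^{(n)}$ and $J$ stand in the relation of case (ii) of Theorem~\ref{T3.5} with $r=1$. Note also $\sigma_\ess(J^{(n)})=\fre$. The one genuinely non-mechanical point is that Theorems~\ref{T3.5} and \ref{T3.6} require a \emph{finite} interval $(a,b)$, so the two unbounded components $(-\infty,\alpha_1)$ and $(\beta_{\ell+1},\infty)$ of $\bbR\setminus\fre$ must be handled separately.

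\textbf{Part (i).} I split $\calE(J^{(n)})$ into the contributions from the individual gaps. For a bounded gap $(\beta_j,\alpha_{j+1})$, $j=1,\dots,\ell$, one has $\dist(x,\fre)=\dist(x,\bbR\setminus(\beta_j,\alpha_{j+1}))$ for $x$ in the gap, so this contribution equals $\Sigma_{(\beta_j,\alpha_{j+1})}(J^{(n)})$, which by Theorem~\ref{T3.5}(ii) (with $r=1$) is at most $\Sigma_{(\beta_j,\alpha_{j+1})}(J)+\bigl(\tfrac12|\alpha_{j+1}-\beta_j|\bigr)^{1/2}$, the first summand being the contribution of that gap to $\calE(J)$. (This also uses $\Sigma_{(\beta_j,\alpha_{j+1})}(J)\leq\calE(J)<\infty$.) For the two unbounded components, the relevant eigenvalues lie strictly above $\beta_{\ell+1}=\sup\sigma_\ess(J)$ or strictly below $\alpha_1$, hence are given by the linear min–max principle; since $J^{(n)}$ is a compression of $J$, its $k$-th eigenvalue above $\beta_{\ell+1}$ is $\leq$ the $k$-th eigenvalue of $J$ above $\beta_{\ell+1}$ (and symmetrically below $\alpha_1$), so these two contributions do not exceed the corresponding contributions to $\calE(J)$. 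Summing the $\ell$ bounded-gap estimates and the two unbounded ones gives \eqref{3.28}.

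\textbf{Part (ii).} Fix $j$ and $\veps>0$. Suppose first $\beta_j$ abuts a bounded gap (i.e. $j\leq\ell$): apply Theorem~\ref{T3.6} with $(a,b)=(\beta_j,\alpha_{j+1})$, $r=1$, $A=J$, $B=J^{(n)}$. If $j=\ell+1$, first replace $(\beta_{\ell+1},\infty)$ by $(\beta_{\ell+1},M)$ with $M>\norm{J}\geq\norm{J^{(n)}}$, so that $(\beta_{\ell+1},M)\cap\sigma_\ess(J)=\emptyset$ and $\Sigma_{(\beta_{\ell+1},M)}(J)=\Sigma_{(\beta_{\ell+1},\infty)}(J)\leq\calE(J)<\infty$, and apply Theorem~\ref{T3.6} on $(\beta_{\ell+1},M)$. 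In either case, for all small $\delta$ and all $n$,
\[
\sum_{x_k(J^{(n)})\in(\beta_j,\beta_j+\delta)}(x_k(J^{(n)})-\beta_j)^{1/2}\ \leq\ \delta^{1/2}+\sum_{x_k(J)\in(\beta_j,\beta_j+2\delta)}(x_k(J)-\beta_j)^{1/2}.
\]
The second term is a tail of the convergent sum $\calE(J)$, hence tends to $0$ as $\delta\downarrow0$, and so does $\delta^{1/2}$; choosing $\delta$ so small that each is $\leq\veps/4$ yields \eqref{3.29}. Applying the same argument to $-J$ (or the ``$b$''-version of Theorem~\ref{T3.6}) at the edge $\alpha_j$---using the bounded gap $(\beta_{j-1},\alpha_j)$ when $j\geq2$ and a truncation $(-M,\alpha_1)$ of the left half-line when $j=1$---yields \eqref{3.30}.

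\textbf{Main obstacle.} There is essentially none beyond the truncation of the unbounded gaps in both parts and the use of the bare min–max inequality for compressions in part (i); once the infinite ``gaps'' are cut at a level above $\norm{J}$, everything reduces to Theorems~\ref{T3.5}--\ref{T3.6}, and the finiteness of $\calE(J)$ supplies the convergence of the tails needed in (ii).
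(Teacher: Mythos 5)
Your proposal is correct and follows essentially the same route as the paper: part (i) via Theorem~\ref{T3.5}(ii) with $r=1$ in each bounded gap plus the min--max monotonicity of compressions for the eigenvalues below $\alpha_1$ and above $\beta_{\ell+1}$, and part (ii) via Theorem~\ref{T3.6} with $\delta$ chosen so that both $\delta^{1/2}$ and the tail of the convergent sum $\calE(J)$ are each at most $\veps/4$. Your explicit truncation of the unbounded components at a level above $\norm{J}$ is a detail the paper leaves implicit, but it is the right way to make the lemmas (stated for finite intervals) apply there.
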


\begin{proof} (i) By the min-max principle for eigenvalues above and below the essential spectrum, the
sums for eigenvalues below $\alpha_1$ or above $\beta_{\ell+1}$ get smaller. In each gap, we use
Theorem~\ref{T3.5} (ii). This yields \eqref{3.28} as $r=1$.

\smallskip
(ii) We prove \eqref{3.29}; the proof of \eqref{3.30} is similar. Take $\delta_0 <\f14 (\alpha_{j+1} -\beta_j)$
% or $\delta_0<1$ if $j=\ell+1$,
so that
\begin{equation}
\sum_{x_k(J)\in(\beta_j,\beta_j +2\delta_0)} (x_k(J)-\beta_j)^{1/2} < \tfrac14\, \veps
\end{equation}
Then pick $\delta <\delta_0$ so that $\delta^{1/2} <\f14\veps$. \eqref{3.27} implies \eqref{3.29}.
\end{proof}

\begin{theorem}\lb{T3.8} Let $J,\, \ti J$ be two Jacobi matrices with $\sigma_\ess(J)=\sigma_\ess(\ti J)=\fre$
and $\calE(J),\, \calE(\ti J)<\infty$. For $m,q\geq 0$, let $J_{m,q}$ be the Jacobi matrix with
\begin{align}
a_n(J_{m,q}) &= \begin{cases}
a_n(J) & n=1, \dots, m \\
a_{n-m+q}(\ti J) & n=m+1, \dots
\end{cases} \lb{3.31} \\
b_n(J_{m,q}) &= \begin{cases}
b_n(J) & n=1, \dots, m \\
b_{n-m+q}(\ti J) & n=m+1, \dots
\end{cases} \lb{3.32}
\end{align}
Then for a constant, $K$, independent of $m$ and $q$,
\begin{equation} \lb{3.33}
\calE(J_{m,q}) \leq \calE(J) + \calE(\ti J) + K
\end{equation}
and for any $j\in\{1,\dots,\ell+1\}$ and $\veps>0$, there is a $\delta>0$ so that for all $m,q$,
\begin{equation} \lb{3.34}
\sum_{x_k(J_{m,q})\in (\beta_j,\beta_j +\delta)} (x_k(J_{m,q})-\beta_j)^{1/2} < \tfrac12\, \veps
\end{equation}
A similar result holds near $\alpha_{j}$.
\end{theorem}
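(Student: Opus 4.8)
The plan is to realize $J_{m,q}$ as a bounded-rank perturbation of a direct sum built from a finite corner of $J$ and a stripped copy of $\tilde J$, and then to feed each edge of $\fre$ into the per-edge perturbation estimates of Theorems~\ref{T3.5} and~\ref{T3.6}. Let $P_m$ be the projection onto $\spann\{\delta_1,\dots,\delta_m\}$ and let $J_{m;F}=P_mJP_m\restriction\ran(P_m)$ be the top-left $m\times m$ block of $J$. Reading off \eqref{3.31}--\eqref{3.32}, the restriction of $J_{m,q}$ to the sites $\{m+1,m+2,\dots\}$ equals $\tilde J^{(q)}$, the restriction to $\{1,\dots,m\}$ equals $J_{m;F}$, and the only entries of $J_{m,q}$ joining the two blocks form the single bond $a_m(J)$ between sites $m$ and $m+1$. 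Hence $J_{m,q}=(J_{m;F}\oplus\tilde J^{(q)})+R$, where $R$, coming from that bond, is rank two (the sum of a positive and a negative rank-one operator). Moreover $J_{m;F}$ is a restriction of $J$ with $\rank((1-P_m)JP_m)=1$, and $\tilde J^{(q)}$ is a restriction of $\tilde J$ with the analogous rank-one property, while $\sigma_\ess(J_{m,q})=\sigma_\ess(\tilde J^{(q)})=\fre$ since deleting $m$ sites leaves the essential spectrum unchanged. Finally, every entry of $J_{m,q}$ is an entry of $J$ or of $\tilde J$, so there is a constant $C_0$, depending only on $J$ and $\tilde J$, with $\norm{J_{m,q}},\norm{J_{m;F}},\norm{\tilde J^{(q)}}\le C_0$ for all $m,q$; in particular all their eigenvalues lie in $[-C_0,C_0]$.

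For \eqref{3.33}, split $\sigma(J_{m,q})\setminus\fre$ according to the component of $\bbR\setminus\fre$ containing each eigenvalue. On a bounded gap $(\beta_j,\alpha_{j+1})$ one has $\dist(x,\fre)=\dist(x,\bbR\setminus(\beta_j,\alpha_{j+1}))$, so applying Theorem~\ref{T3.5}(i) to $J_{m,q}=(J_{m;F}\oplus\tilde J^{(q)})+R$ with $r=2$, then additivity of $\Sigma_{(\beta_j,\alpha_{j+1})}$ over the direct sum, then Theorem~\ref{T3.5}(ii) to $J_{m;F}$ and to $\tilde J^{(q)}$ (each with $r=1$) yields
\[
\Sigma_{(\beta_j,\alpha_{j+1})}(J_{m,q})\le\Sigma_{(\beta_j,\alpha_{j+1})}(J)+\Sigma_{(\beta_j,\alpha_{j+1})}(\tilde J)+4\bigl(\tfrac12(\alpha_{j+1}-\beta_j)\bigr)^{1/2}.
\]
For the two unbounded gaps, replace $(-\infty,\alpha_1)$ and $(\beta_{\ell+1},\infty)$ by bounded intervals $(a,\alpha_1)$ and $(\beta_{\ell+1},b)$ with $a$ small enough and $b$ large enough (determined by $C_0$ and $\fre$): since all eigenvalues in play lie in $[-C_0,C_0]$, for such eigenvalues one still has $\dist(x,\bbR\setminus(a,\alpha_1))=\dist(x,\fre)$, and similarly at the right edge, so the same chain of inequalities goes through unchanged. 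Summing the $\ell$ bounded-gap bounds and the two edge bounds gives \eqref{3.33} with a constant $K$ depending only on $\fre$ and $C_0$, hence only on $J$ and $\tilde J$.

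For \eqref{3.34}, fix $j$ and $\veps>0$. Using Theorem~\ref{T3.6} (with $r=2$) for the comparison of $J_{m;F}\oplus\tilde J^{(q)}$ with $J_{m,q}$, then additivity over the direct sum, then Theorem~\ref{T3.6} (with $r=1$) for $J_{m;F}$ against $J$ and for $\tilde J^{(q)}$ against $\tilde J$, one obtains, for all sufficiently small $\delta>0$,
\[
\sum_{x_k(J_{m,q})\in(\beta_j,\beta_j+\delta)}(x_k(J_{m,q})-\beta_j)^{1/2}\le 2\delta^{1/2}+2(2\delta)^{1/2}+\sum_{x_k(J)\in(\beta_j,\beta_j+4\delta)}(x_k(J)-\beta_j)^{1/2}+\sum_{x_k(\tilde J)\in(\beta_j,\beta_j+4\delta)}(x_k(\tilde J)-\beta_j)^{1/2}
\]
(for $j=\ell+1$ one works in a truncated gap $(\beta_{\ell+1},b)$ as above). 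The first two terms tend to $0$ with $\delta$, and the last two tend to $0$ as $\delta\downarrow0$ because $\calE(J),\calE(\tilde J)<\infty$. Choosing $\delta$ so that each of the four summands is below $\veps/8$ proves \eqref{3.34} uniformly in $m,q$; the statement near $\alpha_j$ is identical, using the right-endpoint version of Theorem~\ref{T3.6}.

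There is no serious obstacle: once $J_{m,q}$ is recognized as the rank-two perturbation $(J_{m;F}\oplus\tilde J^{(q)})+R$, everything is a direct assembly of Theorems~\ref{T3.5}--\ref{T3.7}. The one point that needs care is the two unbounded gaps, where one must invoke the uniform norm bound $C_0$ to pass to bounded intervals $(a,\alpha_1)$ and $(\beta_{\ell+1},b)$ on which $\dist(x,\bbR\setminus(a,b))$ still coincides with $\dist(x,\fre)$ for all the relevant eigenvalues, so that Theorems~\ref{T3.5} and~\ref{T3.6}, stated only for bounded $(a,b)$, apply.
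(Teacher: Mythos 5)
Your proof is correct and follows essentially the same route as the paper's: the same decomposition of $J_{m,q}$ as a rank-two perturbation of the direct sum of the top-left $m\times m$ corner of $J$ with $\ti J^{(q)}$, the same chained applications of Theorem~\ref{T3.5} (and Theorem~\ref{T3.6} for \eqref{3.34}), and the same uniform norm bound to truncate the two unbounded components of $\bbR\setminus\fre$. You merely spell out the constant bookkeeping and the truncation of the unbounded gaps in more detail than the paper does.
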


\begin{proof} Let $Q_m$ be the projection onto $\spann\{\delta_j\}_{j=1}^m$ and $P_m =1-Q_m$.
Then $J_{m,q}-Q_mJQ_m-P_m\ti J^{(q)} P_m$ is rank two. Thus, for $j=1, \dots, \ell$ and $\gamma =\max_{j=1, \dots, \ell}
(\f12\abs{\alpha_{j+1} -\beta_j})^{1/2}$,
\begin{align*}
\Sigma_{(\beta_j,\alpha_{j+1})}(J_{m,q})
&\leq 2\gamma + \Sigma_{(\beta_j,\alpha_{j+1})} (Q_m JQ_m) + \Sigma_{(\beta_j,\alpha_{j+1})} (P_m\ti J^{(q)}P_m) \\
&\leq 4\gamma + \Sigma_{(\beta_j,\alpha_{j+1})}(J) + \Sigma_{(\beta_j,\alpha_{j+1})} (\ti J^{(q)}) \\
&\leq 5\gamma + \Sigma_{(\beta_j,\alpha_{j+1})}(J) + \Sigma_{(\beta_j,\alpha_{j+1})} (\ti J)
\end{align*}

For eigenvalues below $\alpha_1$ (or above $\beta_{\ell+1}$), we use the fact that $\abs{a_n(J_{m,q})}\leq\norm{J}$
to see that $\norm{J_{m,q}}\leq 2\norm{J} + \norm{\ti J}$ (a crude over-estimate). Hence we can do a similar bound on some $\Sigma_{(\kappa,\alpha_1)}(J_{m,q})$ with $\kappa$ independent of $m$ and $q$.

The passage from the proof of \eqref{3.33} to the proof of \eqref{3.34} is similar to the argument in the
proof of Theorem~\ref{T3.7}.
\end{proof}

It is a well-known phenomenon that, under strong limits, spectrum can get lost (e.g., if $J_n$ is a Jacobi matrix
which is the free $J_0$, except that for $m\in (n^2-n, n^2 +n)$, $b_m=-2$, then $J_n \overset{s}{\longrightarrow}
J_0$ but $J_n$ has more and more eigenvalues in $(-4,-2)$). We are going to be interested in situations where
this doesn't happen, which is the last subject we consider in this section.

\begin{theorem}\lb{T3.9} Let $J$ be a Jacobi matrix with $\sigma_\ess(J)=\fre$. Suppose that $J^{(n_k)}\to
\ti J$ in the sense that for each $m\geq 1$,
\begin{equation} \lb{3.35}
a_{n_k+m}\to \ti a_m \qquad b_{n_k+m} \to \ti b_m
\end{equation}
Then $\ti J$ has at most one eigenvalue in $(\beta_j,\alpha_{j+1})$, and for each $\delta$ small and $n_k$ large,
$J^{(n_k)}$ has the same number of eigenvalues in $(\beta_j+\delta, \alpha_{j+1}-\delta)$ as $\ti J$. In fact, if
$\ti J$ has an eigenvalue $\ti \lambda$ there, the eigenvalue of $J^{(n_k)}$ in that interval converges to $\ti \lambda$.
\end{theorem}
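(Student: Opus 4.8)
The plan is to feed the eigenvalue‑counting bound of Theorem~\ref{T3.4} into a compactness argument, and to rule out ``escaping'' gap eigenvalues by transplanting them back into $J$. First I record the soft consequences of the hypothesis. Since the parameters of $J^{(n_k)}$ are bounded by $\norm{J}$ and converge entrywise, $J^{(n_k)}\to\ti J$ strongly; hence $(J^{(n_k)}-z)^{-1}\to(\ti J-z)^{-1}$ strongly for $z\in\bbC_+$, so $m(z,J^{(n_k)})\to m(z,\ti J)$ on $\bbC_+$ and the spectral measures $\mu_{n_k}$ of $J^{(n_k)}$ converge vaguely to the spectral measure $\ti\mu$ of $\ti J$. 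Strong convergence also gives upper semicontinuity of the spectrum: if $\ti\lambda$ is an eigenvalue of $\ti J$ in the gap with normalized eigenvector $v$, then $\norm{(J^{(n_k)}-\ti\lambda)v}\to0$, so $\dist(\ti\lambda,\sigma(J^{(n_k)}))\to0$; since $\ti\lambda$ lies strictly inside the gap and $\sigma_\ess(J^{(n_k)})=\fre$, for large $k$ the nearby point of $\sigma(J^{(n_k)})$ is an isolated eigenvalue converging to $\ti\lambda$.

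Next I pin down $\sigma(\ti J)$ inside the gap $(\beta_j,\alpha_{j+1})$. If $\ti J$ had two distinct eigenvalues there, the previous step would give two distinct eigenvalues of $J^{(n_k)}$ in a fixed compact subinterval, contradicting Theorem~\ref{T3.4}; and if $\ti J$ had a point $\lambda^{*}$ of essential spectrum in the gap, one could pick two orthonormal approximate eigenvectors from $\ran\chi_{(\lambda^{*}-\eta,\lambda^{*}+\eta)}(\ti J)$, approximate them by finitely supported vectors, and run the $(A-\gamma)^{2}$ min--max device from the proof of Theorem~\ref{T3.4} to get two eigenvalues of $J^{(n_k)}$ near $\lambda^{*}$, again contradicting Theorem~\ref{T3.4}. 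Hence $\sigma(\ti J)\cap(\beta_j,\alpha_{j+1})$ is empty or a single isolated eigenvalue $\ti\lambda$. In the latter case, fix $\delta$ small enough that $\ti\lambda\in(\beta_j+\delta,\alpha_{j+1}-\delta)$; upper semicontinuity produces an eigenvalue of $J^{(n_k)}$ in that interval converging to $\ti\lambda$, while Theorem~\ref{T3.4} allows at most one there, so for large $k$ there is exactly one and it converges to $\ti\lambda$, matching the count for $\ti J$.

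It remains to treat the case $\sigma(\ti J)\cap(\beta_j,\alpha_{j+1})=\emptyset$ and to show $J^{(n_k)}$ then has no eigenvalue in $(\beta_j+\delta,\alpha_{j+1}-\delta)$ for large $k$. Suppose not; passing to a subsequence, $J^{(n_k)}$ has an eigenvalue $\lambda_k\to\lambda_\infty\in[\beta_j+\delta,\alpha_{j+1}-\delta]$. Since $\ti\mu$ gives no mass to the gap, vague convergence yields $\mu_{n_k}\bigl((\beta_j+\tfrac{\delta}{2},\alpha_{j+1}-\tfrac{\delta}{2})\bigr)\to0$, hence the weight $\rho_k:=\mu_{n_k}(\{\lambda_k\})\to0$; by simplicity of the spectrum the $\ell^2$-normalized eigenfunction $\psi_k$ of $J^{(n_k)}$ at $\lambda_k$ satisfies $\abs{\psi_k(1)}^{2}=\rho_k\to0$. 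Let $\Psi_k\in\ell^2$ be $\psi_k$ preceded by $n_k$ zeros; a direct computation gives $(J-\lambda_k)\Psi_k=a_{n_k}\psi_k(1)\delta_{n_k}$, so $\norm{(J-\lambda_k)\Psi_k}\le\norm{J}\abs{\psi_k(1)}\to0$. Thus $\{\Psi_k\}$ is a Weyl sequence for $J$ at energy $\lambda_\infty$ (using $\lambda_k\to\lambda_\infty$) with $\norm{\Psi_k}=1$ and $\Psi_k\rightharpoonup0$ (it is supported on $\{n_k+1,n_k+2,\dots\}$), forcing $\lambda_\infty\in\sigma_\ess(J)=\fre$, which is impossible since $\lambda_\infty\in[\beta_j+\delta,\alpha_{j+1}-\delta]$. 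The heart of the argument, and the step most in need of care, is exactly this last one: converting the smallness of the eigenvalue weight $\rho_k$ into a singular Weyl sequence for the original $J$ is what forbids a gap eigenvalue of $J^{(n_k)}$ from drifting to infinity. A lesser but genuine point is the exclusion of essential spectrum of $\ti J$ from the gap, handled above by the quadratic-form count used for Theorem~\ref{T3.4}.
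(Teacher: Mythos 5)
Your proof is correct, and its overall architecture coincides with the paper's: the eigenvector of $\ti J$ serves as an approximate eigenvector of $J^{(n_k)}$ (giving at least one nearby eigenvalue and convergence to $\ti\lambda$), Theorem~\ref{T3.4} caps the count at one, and the shift-by-$n_k$ transplantation with error $a_{n_k}\abs{\psi_k(1)}\,\delta_{n_k}$ is exactly the paper's computation \eqref{3.37}--\eqref{3.38}. The one genuine divergence is how a spurious eigenvalue $\lambda_k\to\lambda_\infty$ of $J^{(n_k)}$ is excluded when $\ti J$ has none: the paper runs a dichotomy on the first component of the eigenvector --- if it tends to $0$, the transplanted vectors form a singular Weyl sequence for $J$ (impossible in a gap); if not, a weak limit point of the eigenvectors is a nonzero eigenvector of $\ti J$ at $\lambda_\infty$ --- whereas you first exclude all of $\sigma(\ti J)$ (including possible essential spectrum, via the $(A-\gamma)^2$ min--max device) from the gap and then force $\abs{\psi_k(1)}^2=\mu_{n_k}(\{\lambda_k\})\to 0$ from weak convergence of the spectral measures, landing in the first horn directly. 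Both routes are sound: the paper's is more self-contained (it needs no convergence of measures and no separate treatment of essential spectrum of $\ti J$, since any limit of gap eigenvalues is shown outright to be an eigenvalue of $\ti J$), while yours isolates the quantitative fact that the eigenvalue weights in the gap die out, at the cost of the extra preliminary step.
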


\begin{proof} If $\ti \lambda$ is an eigenvalue of $\ti J$ in $(\beta_j, \alpha_{j+1})$ with $\ti J \ti u
=\ti \lambda \ti u$ (and $\norm{\ti u}=1$), then $\veps_{n_k}\equiv\norm{(J^{(n_k)}-\ti \lambda)\ti u}
\to 0$. Thus, $(\ti \lambda -\veps_{n_k}, \ti \lambda + \veps_{n_k})\cap\sigma(J^{(n_k)})\neq \emptyset$.
Since the interval for small enough $\veps_{n_k}$ is disjoint from $\sigma_\ess(J^{(n_k)})$, we conclude that there is at
least one eigenvalue $\lambda_{n_k}$ in the interval, and clearly, $\lambda_{n_k}\to\ti \lambda$.

This fact plus Theorem~\ref{T3.4} implies that $\ti J$ has at most one eigenvalue in $(\beta_j,\alpha_{j+1})$.

Suppose next that $J^{(n_k)} u_{n_k}=\lambda_{n_k} u_{n_k}$ with $\norm{u_{n_k}}=1$ and $\lambda_{n_k}\to\ti \lambda\in
(\beta_j,\alpha_{j+1})$. Given $v\in\ell^2(\bbN)$ and $n_k$, define
\begin{equation} \lb{3.36}
(v^{(n_k)})_m = \begin{cases}
0 & m\leq n_k \\
v_{m-n_k} & m>n_k
\end{cases}
\end{equation}
Then
\begin{equation} \lb{3.37}
\biggl[J v^{(n_k)}-(J^{(n_k)}v)^{(n_k)}\biggr]_m =
\begin{cases}
0 & m \neq n_k \\
a_{n_k} v_1 & m=n_k
%-a_{n_k} v_1 & m=n_{k+1}
\end{cases}
\end{equation}
We conclude that
\begin{equation} \lb{3.38}
\norm{(J-\lambda_{n_k})u_{n_k}^{(n_k)}} = a_{n_k}\abs{(u_{n_k})_1}
\end{equation}

If $(u_{n_k})_1\to 0$, this implies $\ti \lambda\in\sigma_\ess(J)$ since $u_{n_k}^{(n_k)}\overset{w}{\longrightarrow}0$.
But that is impossible, so $(u_{n_k})_1\nrightarrow 0$. By compactness of the unit ball in the weak topology, we
conclude $u_{n_k}$ has a weak limit point $\ti u$ with $(\ti u)_1\neq 0$, so $\ti u\not\equiv 0$. But $(\ti J
-\ti \lambda)\ti u=0$, so $\ti \lambda\in\sigma(\ti J)$.

We have thus proven the final sentence in the theorem, given Theorem~\ref{T3.4}, which says $J^{(n_k)}$ for $k$ large has
at most one eigenvalue in $(\beta_j+\delta, \alpha_{j+1}-\delta)$.
\end{proof}

The final theorem of the section deals with a specialized situation that we'll need later.

\begin{theorem}\lb{T3.10} Let $J$ be a Jacobi matrix with $\sigma_\ess (J)=\fre$. Suppose that, as $n_k\to\infty$, \eqref{3.35}
holds for some two-sided $\ti J$ and all $m\in\bbZ$.
Let $J_k$ be defined by
\begin{align}
a_n (J_k) &= \begin{cases}
a_m & m\leq n_k \\
\ti a_{m-n_k} & m> n_k
\end{cases} \\
b_m (J_k) &= \begin{cases}
b_m & m\leq n_k \\
\ti b_{m-n_k} & m> n_k
\end{cases}
\end{align}
Then for any $\delta >0$, with $\{\beta_j+\delta, \alpha_{j+1}-\delta\}\notin\sigma(J)$, all the eigenvalues of
$J_k$ in $(\beta_j+\delta,\alpha_{j+1}-\delta)$ for $k$ large are near eigenvalues of $J$ in that interval,
and these eigenvalues converge to those for $J$. Moreover, there is exactly one eigenvalue of $J_k$ near
a single eigenvalue of $J$ in that interval.
\end{theorem}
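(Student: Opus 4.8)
The plan is to split the argument into an easy existence half and a harder exclusion half. Fix a gap $G=(\beta_j,\alpha_{j+1})$ and set $I=(\beta_j+\delta,\alpha_{j+1}-\delta)$. Since the endpoints of $I$ are not in $\sigma(J)$ and $\sigma(J)\cap G$ can accumulate only at $\beta_j,\alpha_{j+1}$, the set $\sigma(J)\cap\ol I=\{\lambda^{(1)},\dots,\lambda^{(P)}\}$ is finite, each $\lambda^{(i)}$ a simple eigenvalue with $\ell^2$ eigenvector $u^{(i)}$. Two preliminary remarks. First, $J_k$ coincides with $J$ on the first $n_k$ sites and with the \emph{$k$-independent} half-line Jacobi matrix $\ti J_r$ (parameters $\{\ti a_m,\ti b_m\}_{m\ge 1}$) on the remaining sites; hence $\sigma_\ess(J_k)=\sigma_\ess(\ti J_r)\subseteq\sigma_\ess(\ti J)\subseteq\fre$ — the last inclusion because $\ti J$ is a two-sided right limit of $J$ — so every point of $\sigma(J_k)\cap G$ is an eigenvalue. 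Second, when one recenters at site $n_k$, the coefficients of $J_k$ converge to those of the two-sided $\ti J$: for indices $\le 0$ by hypothesis, and for indices $\ge 1$ they already equal $\ti J$'s.

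\emph{Existence of eigenvalues near each $\lambda^{(i)}$.} Because rows $1,\dots,n_k$ of $J_k$ and $J$ agree, $(J_k-\lambda^{(i)})u^{(i)}$ is supported on sites $>n_k$; since $u^{(i)}\in\ell^2(\bbN)$ and the $J_k$ are uniformly bounded, its norm tends to $0$. Hence $\dist(\lambda^{(i)},\sigma(J_k))\to 0$; as $\lambda^{(i)}$ lies in the open gap $G$ at positive distance from $\fre\supseteq\sigma_\ess(J_k)$, for large $k$ there is an eigenvalue $\mu_k^{(i)}\in G$ of $J_k$ with $\mu_k^{(i)}\to\lambda^{(i)}$. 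These are eventually distinct, so $J_k$ has at least $P$ eigenvalues in $I$, converging to the $\lambda^{(i)}$.

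\emph{No further eigenvalues.} The plan is to show that, along any subsequence, if $J_k$ has $r$ eigenvalues converging to a common point $E_\ast\in\ol I$, then $r$ is at most the multiplicity of $E_\ast$ as an eigenvalue of $J$ (namely $1$ if $E_\ast\in\sigma(J)$ and $0$ otherwise); a compactness covering of $\ol I$ then forces $J_k$ to have exactly $P$ eigenvalues in $I$ for large $k$, one near each $\lambda^{(i)}$ and converging to it, which together with the previous paragraph is the theorem. Suppose $r$ exceeds that multiplicity. Pick $r$ orthonormal eigenvectors of $J_k$ whose eigenvalues tend to $E_\ast$, let $V_k$ be their span, and choose a unit $w_k\in V_k$ that, when $E_\ast=\lambda^{(i)}\in\sigma(J)$, is orthogonal to $u^{(i)}$ (possible since then $r\ge 2>1=\dim\ker(J-E_\ast)$); when $E_\ast\notin\sigma(J)$, take any unit $w_k\in V_k$. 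In every case $\|(J_k-E_\ast)w_k\|=:\veps_k\to 0$. The crucial step is to show $w_k$ is exponentially localized about the junction site $n_k$. Write $(J-E_\ast)w_k=(J-J_k)w_k+(J_k-E_\ast)w_k$: the first term is supported on sites $>n_k$ with norm $\le\|J-J_k\|$, which is bounded but not small (only $J^{(n_k)}\to\ti J_r$ strongly), and the second has norm $\veps_k$. Inverting $J-E_\ast$ on $\ell^2(\bbN)$ when $E_\ast\notin\sigma(J)$, or inverting its restriction to $\{u^{(i)}\}^\perp$ when $E_\ast=\lambda^{(i)}$ (where $E_\ast$ lies at a fixed positive distance from the spectrum), and using that $(J-E_\ast)^{-1}$ — or, in the collision case, the resolvent of $J$ with the $E_\ast$-pole deleted — has exponentially decaying matrix elements (Combes--Thomas, together with $E_\ast$ being isolated in $\sigma(J)$), one gets that the portion of $w_k$ on sites $\le n_k-R$ has norm $\le Ce^{-cR}+o(1)$ as $k\to\infty$. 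A symmetric estimate on the $\ti J_r$ side starts from the identity $(\ti J_r-E_\ast)\psi_k=-a_{n_k}w_k(n_k)\delta_1+\xi_k$, where $\psi_k$ is $w_k$ restricted to sites $>n_k$ (relabelled to start at $1$) and $\|\xi_k\|\le\veps_k$; deleting the (unique, if present) pole of $(\ti J_r-E_\ast)^{-1}$ near $E_\ast$ gives that the portion of $w_k$ on sites $>n_k+R$ has norm $\le Ce^{-cR}+o(1)$. Thus a fixed window $[n_k-R,n_k+R]$ carries all but $\veps$ of the mass of $w_k$. Recentering $w_k$ at site $n_k$ and using the second preliminary remark, any weak limit of the recentered vectors is nonzero — its restriction to $[-R,R]$ is a norm limit of mass $\ge 1-\veps$ — and solves the eigenvalue equation for $\ti J$ at energy $E_\ast$; hence $E_\ast\in\sigma(\ti J)\subseteq\fre$, contradicting $E_\ast\in G$.

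\emph{Where the difficulty lies.} The main obstacle is precisely the localization step: a priori the spurious eigenvector could be smeared over many sites rather than concentrated at the junction $n_k$, which would ruin the weak-limit extraction. Making it localize is exactly what forces $E_\ast$ into a spectral gap not only of $J$ but effectively of $\ti J_r$. The genuinely delicate case is when $\ti J_r$ — or, in the collision case, the reduced $E_\ast$-problem for $J$ on $\{u^{(i)}\}^\perp$ — has a bound state in the gap located precisely at $E_\ast$, so the relevant resolvent is unbounded there; one then reads off $w_k(n_k)\to 0$ from the Fredholm solvability condition for $(\ti J_r-E_\ast)\psi_k=-a_{n_k}w_k(n_k)\delta_1+\xi_k$ (by pairing against the corresponding eigenvector) and still concludes localization using the reduced inverse. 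Everything else — weak compactness, the quasimode bound, $\sigma(\ti J)\subseteq\fre$ for right limits, and Combes--Thomas for tridiagonal operators (and for functions holomorphic near their spectrum) — is routine.
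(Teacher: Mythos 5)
Your proposal is correct, but it takes a genuinely different and considerably heavier route than the paper's. Both arguments share the existence half (the true eigenvectors of $J$ are approximate eigenvectors of $J_k$ because the two operators agree on the first $n_k$ rows) and both derive the exclusion half from the same ultimate contradiction, namely that $\sigma(\ti J)\subset\fre$ forbids the gap point from being an eigenvalue of the two-sided limit. The difference is in how one controls a putative spurious eigenvector $u_k$. You prove quantitative exponential localization of $u_k$ about the junction site $n_k$, using Combes--Thomas bounds for $(J-E_*)^{-1}$ (or the reduced resolvent when $E_*$ collides with an eigenvalue of $J$, after orthogonalizing against $u^{(i)}$) on the left and for $(\ti J_r-E_*)^{-1}$ on the right, including a Fredholm-alternative detour when the half-line operator $\ti J_r$ happens to have a bound state exactly at $E_*$; only then do you extract a nonzero weak limit and land in $\sigma(\ti J)$. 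The paper runs the logic in the opposite order and avoids all resolvent machinery: it first observes that the recentered $u_k$ must converge weakly to \emph{zero} (a nonzero weak limit would already be an eigenvector of $\ti J$ in the gap), and then exploits locality --- cutting $u_k$ at the junction perturbs $(\ti J-\lambda_\infty)$ applied to the tail only through two components of $u_k$ adjacent to the junction, which vanish by the weak convergence --- so a non-vanishing tail would be a Weyl sequence for the \emph{two-sided} $\ti J$ at $\lambda_\infty\notin\sigma(\ti J)$. Working with $\ti J$ rather than $\ti J_r$ is exactly what makes your ``delicate case'' (a bound state of $\ti J_r$ at $E_*$) evaporate. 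The conclusion $\|Pu_k\|\to0$ then shows $u_k$ is an approximate eigenvector of $J$ itself, forcing $\lambda_\infty\in\sigma(J)$ and norm convergence of $u_k$ to the (simple) eigenvector, from which uniqueness is immediate by orthogonality. What your approach buys is an explicit localization rate, which is not needed here; what it costs is the Combes--Thomas apparatus, decay of reduced resolvents, and the bound-state case analysis, all of which you correctly flag as the technical burden but which the paper's weak-convergence-first ordering renders unnecessary.
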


\begin{proof} We follow the first part of the proof of the last theorem until the analysis of $J_k u_k = \lambda_k u_k$ with $\lambda_k\to\lambda_\infty\in (\beta_j+\delta,\alpha_j-\delta)$. If we prove that $\lambda_\infty\in
\sigma(J)$ and $u_k$ converges in norm to the corresponding eigenvector, we are done. For we immediately get existence
of eigenvalues near $\lambda_\infty$, and uniqueness follows from the orthogonality of eigenvectors and the norm
convergence.

Define $\ti u_k\in\ell^2 (\bbZ)$ by
\begin{equation}
(\ti u_k)_m = \begin{cases}
(u_k)_{m+n_k} & m> -n_k \\
0 & m\leq -n_k
\end{cases}
\end{equation}
and suppose $\ti u_k$ has a nonzero weak limit $\ti u_\infty$. Then $(\ti J-\lambda_\infty)\ti u_\infty =0$,
so $\lambda_\infty\in\sigma(\ti J)$. As $\sigma(\ti J)\subset\sigma_\ess(J)=\fre$ by approximate eigenvector
arguments (see, e.g., \cite{LS}), we arrive at a contradiction. Thus, $\ti u_k$ converges weakly to zero.
This implies that its projection $P\ti u_k$ onto $\ell^2(\bbN)$ converges to zero in norm since otherwise
$\norm{(\ti J-\lambda_\infty)P \ti u_k} \to 0$ which is again impossible because $\lambda_\infty\notin\sigma(\ti J)$.

Therefore, we conclude that $\norm{(J-\lambda_\infty)u_k}\to 0$. Since $\lambda_\infty$ is a simple discrete point of
$\sigma(J)$, this can only happen if $\lambda_\infty$ is an eigenvalue of $J$ and $\norm{(1-P')u_k}\to 0$, where
$P'$ is the projection onto the eigenvector of $\lambda_\infty$; that is, $u_k$ converges to that eigenvector
in norm.
\end{proof}

%%%%%%%%%%%%%%%%%%%%%%%%%%%%%%%%%%%%%%%%%%%%%%%%%%%%%%%%%%%%%%
\section{Szeg\H{o}'s theorem} \lb{s4}
%%%%%%%%%%%%%%%%%%%%%%%%%%%%%%%%%%%%%%%%%%%%%%%%%%%%%%%%%%%%%%

Our goal in this section is the following. Let $\fre$ be a finite gap set, $J$ a bounded Jacobi matrix with
$\sigma_\ess(J)=\fre$, and $\{a_n,b_n\}_{n=1}^\infty$ its Jacobi parameters. Let $\{x_k\}$ be the
eigenvalues of $J$ outside $\fre$, and write
\begin{equation} \lb{4.1}
d\mu(x) =w(x)\, dx + d\mu_\s(x)
\end{equation}
where $d\mu$ is the spectral measure for $J$.

Next, define
\begin{equation} \lb{4.2}
A_n = \f{a_1 \cdots a_n}{\ca(\fre)^n} \qquad
\bar A = \limsup A_n \qquad
\ul{A\!} = \liminf A_n
\end{equation}
Consider the three conditions:
\begin{SL}
\item[(i)] Szeg\H{o} condition
\begin{equation} \lb{4.3}
\int_\fre \log(w(x)) \dist(x,\bbR\setminus\fre)^{-1/2}\, dx >-\infty
\end{equation}

\item[(ii)] Blaschke condition
\begin{equation} \lb{4.4}
\calE(J) = \sum_k \dist (x_k,\fre)^{1/2} <\infty
\end{equation}

\item[(iii)] Widom condition
\begin{equation} \lb{4.6}
0 < \ul{A\!} \, \leq \bar A <\infty
\end{equation}
\end{SL}

\begin{theorem} \lb{T4.1} Any two of {\rm{(i)--(iii)}} imply the third.
\end{theorem}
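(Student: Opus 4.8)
The plan is to iterate the step-by-step $C_0$ sum rule \eqref{2.12} and extract the asymptotics of the products $A_n$, using the eigenvalue-control results of Section~\ref{s3} to handle the Blaschke factor $B_\infty(0)$, and a positivity/entropy argument à la Killip--Simon to handle the integral term. Writing \eqref{2.12} for $J, J^{(1)}, \dots, J^{(n-1)}$ and multiplying, the logarithmic integrals telescope (up to a sign, by Theorem~\ref{T2.1}(c) each summand is in every $L^p$, so the manipulation is legitimate) and one gets
\begin{equation*}
\frac{a_1\cdots a_n}{\ca(\fre)^n} = \Biggl(\prod_{j=0}^{n-1} B_\infty^{(j)}(0)\Biggr)\exp\Biggl(\int_0^{2\pi}\log\biggl(\frac{\Ima M(e^{i\theta})}{\Ima M^{(n)}(e^{i\theta})}\biggr)\frac{d\theta}{4\pi}\Biggr),
\end{equation*}
where $B_\infty^{(j)}$ is the alternating Blaschke product built from the eigenvalues of $J^{(j)}$ and $J^{(j+1)}$. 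The first factor is a telescoping product of Blaschke products whose combined zeros/poles are the eigenvalues of $J$ (at $j=0$) and of $J^{(n)}$, and whose modulus at $0$ is therefore controlled by $\exp(-\sum G_\fre(x_k(J)) - \sum G_\fre(x_k(J^{(n)})))$ plus cross terms; since $G_\fre(x)\asymp \dist(x,\fre)^{1/2}$ near $\fre$, this is where $\calE(J)$ and $\sup_n\calE(J^{(n)})$ enter. The second factor is an entropy of $\Ima M$ relative to $\Ima M^{(n)}$.

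The key steps, in order: (1) Establish the iterated sum rule above and identify the two factors. (2) Show $\calE(J)<\infty \Rightarrow \sup_n\calE(J^{(n)})<\infty$ — this is exactly Theorem~\ref{T3.7}(i) — and, conversely, that the Blaschke factor $\prod_j B_\infty^{(j)}(0)$ is bounded above by $1$ (Blaschke products are $\le 1$ in modulus at interior points up to the character, handled as in paper~I) so that a positive lower bound on it forces $\calE(J)<\infty$; the uniform smallness of tails near each band edge, Theorem~\ref{T3.7}(ii), is what lets one pass to the limit and keep the product bounded away from $0$ under (ii). (3) For the integral term, use the quasi-Szeg\H{o}/entropy bound: $\Ima M^{(n)}$ converges (weakly, along subsequences) to the boundary data of some limit point of the $J^{(n)}$, and one shows $\int\log(\Ima M(e^{i\theta}))\,d\theta/4\pi$ is finite iff \eqref{4.3} holds (this is the standard translation between the Szeg\H{o} condition on $\fre$ and finiteness of $\int\log\Ima M$ on $\partial\bbD$ via the covering map, pulling back $w(x)\dist(x,\bbR\setminus\fre)^{-1/2}\,dx$), while $\int\log(\Ima M^{(n)})$ stays bounded above (Jensen) and, by an upper-semicontinuity argument, cannot drift to $-\infty$ under the hypotheses. (4) Combine: given (ii) and (iii), the left side is in $(0,\infty)$ and the Blaschke factor is in $(0,1]$, so the exponential is bounded above and below, forcing the entropy finite, hence (i); given (i) and (ii), the entropy is finite and the Blaschke factor bounded, giving upper and lower bounds on $A_n$, i.e.\ (iii); given (i) and (iii), the exponential and the left side are controlled, forcing the Blaschke factor bounded below, which via step (2) gives $\calE(J)<\infty$, i.e.\ (ii).

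I expect the main obstacle to be step (3), specifically showing that the relative entropy $\int\log(\Ima M/\Ima M^{(n)})$ does not conspire to stay bounded while $\int\log\Ima M$ diverges — equivalently, ruling out that $\int\log\Ima M^{(n)}\to-\infty$ at a matching rate. The clean way is to prove each of the two factors is \emph{separately} controlled: the Blaschke factor purely by eigenvalue data (Section~\ref{s3}), and $\int\log\Ima M^{(n)}$ bounded below uniformly in $n$ by an argument using that $\sigma_\ess(J^{(n)})=\fre$ and the weak limits of $\Ima M^{(n)}\,d\theta$ charge $\fre$ with controlled mass (no spectrum lost, by Theorem~\ref{T3.9}), so that a single Szeg\H{o}-type upper bound for measures with essential support $\fre$ applies. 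The subtlety is that a limit point of the $J^{(n)}$ need not satisfy a Szeg\H{o} condition, so one must instead use the semicontinuity of the entropy functional under weak-$*$ convergence (the entropy $S(\mu\mid\nu)$ is jointly weakly upper semicontinuous in the right sense) combined with the boundedness of the other factor. Once that separation is in place, the equivalence "any two imply the third" is a bookkeeping exercise on the iterated sum rule as in step (4).
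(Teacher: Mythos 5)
Your step (1) is essentially the paper's Theorem~\ref{T4.2}, and your step (2) correctly invokes Theorem~\ref{T3.7}; but there is a genuine gap at steps (3)--(4), and you have in fact put your finger on it yourself without resolving it. The iterated sum rule you write compares $J$ with $J^{(n)}$, so the ``entropy'' factor is the \emph{relative} quantity $\int\log(\Ima M/\Ima M^{(n)})\,d\theta/4\pi = Z(J^{(n)})-Z(J)$. By Theorem~\ref{T2.1}(c) this is finite (indeed in every $L^p$) \emph{unconditionally}, so its boundedness can never force the Szeg\H{o} condition $Z(J)<\infty$: when $Z(J)=\infty$ one also has $Z(J^{(n)})=\infty$ and the difference is simply not the object controlled by the sum rule. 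Likewise, to get $\bar A<\infty$ from your formula you would need a uniform \emph{upper} bound on $Z(J^{(n)})$, which is not available. Your proposed fix via upper semicontinuity of $S$ applied to weak limits of the $J^{(n)}$ also cannot work, because semicontinuity transfers information to the \emph{limit} of the converging sequence, and the $J^{(n)}$ converge to right limits of $J$ (elements of the isospectral torus), not to $J$ itself; so you learn something about $Z$ of a right limit, never about $Z(J)$.

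The missing device is the paper's Proposition~\ref{P4.4}: introduce the auxiliary matrices $J_n$ whose first $n$ Jacobi parameters agree with $J$ and whose tail is that of $J_\fre$ (the equilibrium measure). Then $(J_n)^{(n)}=J_\fre$ with $Z(J_\fre)=0$ and no eigenvalues, so the sum rule gives the clean identity $A_n(J)=\exp(\sum_k G_\fre(x_k(J_n)))\exp(-Z(J_n))$, whence $A_n(J)\le\exp(C_1\calE(J_n))$ (using only $Z(J_n)\ge 0$ and Theorem~\ref{T3.8}), which yields $\bar A<\infty$; and, crucially, $J_n\to J$ \emph{strongly}, so lower semicontinuity of $Z$ now points in the useful direction: $\bar A(J)>0$ forces $\liminf Z(J_{n_j})<\infty$ along a subsequence, hence $Z(J)\le\liminf Z(J_{n_j})<\infty$. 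The remaining implication (i)+(iii)$\Rightarrow$(ii) does go through your original sum rule, via $K_n\le A_n e^{Z(J)}$ (using $Z(J^{(n)})\ge0$) together with Theorem~\ref{T3.2}. Two smaller corrections: the telescoping of the Blaschke factors is exact (there are no ``cross terms''), and the alternating Blaschke products have poles as well as zeros, so $\prod_j B_\infty^{(j)}(0)$ is \emph{not} bounded by $1$; its two-sided control comes from $\calE(J)$ and $\sup_n\calE(J^{(n)})$, not from a maximum-modulus bound.
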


\begin{remarks} 1. We'll eventually prove more; for example, if (ii) holds, then (i) $\Leftrightarrow \bar A >0$;
and if either holds, then (iii) holds.

\smallskip
2. This is a precise analog of a result for $\fre =[-2,2]$ of Simon--Zlato\v{s} \cite{SZ} (cf.\ Theorem~\ref{T1.1})
who relied in part on Killip--Simon \cite{KS} and Simon \cite{S288}.

\smallskip
3. For $\fre =[-2,2]$, the relevance of \eqref{4.4} to Szeg\H{o}-type theorems is a discovery of Killip--Simon
\cite{KS} and Peherstorfer--Yuditskii \cite{PYpams}.

\smallskip
4. When there are no eigenvalues, the implication (i) $\Rightarrow$ (iii) is a result of Widom \cite{Widom}; see also
Aptekarev \cite{Apt}. Peherstorfer--Yuditskii \cite{PY} allowed infinitely many bound states, and in \cite{PYarx},
they proved (i) $\Rightarrow$ (iii) if (ii) holds. The other parts of Theorem~\ref{T4.1} are new, although as noted
to us by Peherstorfer and Yuditskii \cite{Pehpc}, there is an argument to go from \cite{PY,PYarx} to (iii) $\Rightarrow$ (i)
if (ii) holds (see Remark~3 following Theorem~\ref{T4.5} below).
\end{remarks}

Recall that, given any pair of Baire measures, $d\mu$, $d\nu$, on a compact Hausdorff space, we define their relative
entropy by
\begin{equation}
S(\mu\mid\nu) = \begin{cases}
-\infty &\text{ if $d\mu$ is not $d\nu$-a.c.} \\
-\int \log(\f{d\mu}{d\nu})\, d\mu &\text{ if $d\mu$ is $d\nu$-a.c.} \label{4.6a}
\end{cases}
\end{equation}
It is a fundamental fact (see, e.g., \cite[Thm.~2.3.4]{OPUC1}) that $S(\mu\mid\nu)$ is jointly concave and jointly
weakly upper semicontinuous in $d\mu$ and $d\nu$, and that
\begin{equation} \lb{4.7}
\mu(X) = \nu(X) =1 \,\Rightarrow\, S(\mu\mid\nu) \leq 0
\end{equation}

$S$ is relevant because we define
\begin{equation} \lb{4.8}
Z(J) = -\tfrac12\, S(\rho_\fre\mid\mu_J)
\end{equation}
with $d\mu_J$ the spectral measure of $J$ and $d\rho_\fre$ the potential theoretic equilibrium measure for $\fre$.
Then, by \eqref{4.7},
\begin{equation}
Z(J)\geq 0 \lb{4.9}
\end{equation}
More importantly,
\begin{equation}
\text{\eqref{4.3}} \,\Leftrightarrow\, Z(J) <\infty \lb{4.10}
\end{equation}
We have \eqref{4.10} because (see eqn.\ (4.31) and Theorem~4.4 of paper I) $d\rho_\fre$ is $dx\restriction\fre$ a.c.\
and
\begin{equation} \lb{4.11}
C_1\, \dist(x,\bbR\setminus\fre)^{-1/2} \leq \f{d\rho_\fre}{dx} \leq C_2\, \dist(x,\bbR\setminus\fre)^{-1/2}
\end{equation}
for $0<C_1<C_2<\infty$.

Given the connection \eqref{1.19x} between Blaschke products and $G_\fre$, the potential theoretic Green's
function for $\fre$, and the symmetry of Blaschke products (eqn.\ (4.19) of paper~I), one can rewrite the
step-by-step $C_0$ sum rule, Theorem~2.2, as

\begin{theorem}\lb{T4.2} For each $n$, $Z(J)<\infty \Leftrightarrow Z(J^{(n)})<\infty$, and in that case,
\begin{equation} \lb{4.4x}
\f{a_1\cdots a_n}{\ca(\fre)^n} = K_n \exp [Z(J^{(n)})-Z(J)]
\end{equation}
where
\begin{equation} \lb{4.5x}
K_n = \exp\biggl( \sum_k [G_\fre (x_k (J))-G_\fre (x_k (J^{(n)}))]\biggr)
\end{equation}
\end{theorem}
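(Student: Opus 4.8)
The plan is to derive \eqref{4.4x} by iterating the one-step sum rule, Theorem~\ref{T2.2}, along the chain $J=J^{(0)},J^{(1)},\dots,J^{(n)}$ and then telescoping the two resulting factors. Applying Theorem~\ref{T2.2} to each consecutive pair $(J^{(j)},J^{(j+1)})$, $0\le j\le n-1$, and multiplying the $n$ identities gives
\[
\frac{a_1\cdots a_n}{\ca(\fre)^n} = \prod_{j=0}^{n-1} B_\infty^{(j)}(0)\, \exp\biggl( \sum_{j=0}^{n-1} \int_0^{2\pi} \log\biggl( \frac{\Ima M^{(j)}(e^{i\theta})}{\Ima M^{(j+1)}(e^{i\theta})}\biggr) \frac{d\theta}{4\pi}\biggr),
\]
where $M^{(j)}=M(\cdot,J^{(j)})$ and $B_\infty^{(j)}$ is the alternating Blaschke product of Theorem~\ref{T2.1} for the pair $(J^{(j)},J^{(j+1)})$, with poles over the eigenvalues of $J^{(j)}$ and zeros over those of $J^{(j+1)}$. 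The asserted equivalence $Z(J)<\infty\Leftrightarrow Z(J^{(n)})<\infty$ will follow from \eqref{4.10} once we know that $\log(\Ima M^{(j)}/\Ima M^{(j+1)})$ is a.e.\ finite and in $L^1$, which is parts (b) and (c) of Theorem~\ref{T2.1}; summing over $j$ identifies the telescoped integrand with $\log(\Ima M/\Ima M^{(n)})$.

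For the Blaschke factor I would evaluate each $B_\infty^{(j)}(0)$ from the two-variable version of \eqref{1.19x} (eqn.\ (4.19) of paper~I), $|B(z,w)|=e^{-G_\fre(\x(z),\x(w))}$, together with $\x(0)=\infty$ and the symmetry $G_\fre(\infty,y)=G_\fre(y)$: this yields $B_\infty^{(j)}(0)=\exp\bigl(\sum_k[G_\fre(x_k(J^{(j)}))-G_\fre(x_k(J^{(j+1)}))]\bigr)$, the branch conventions of the proof of Theorem~\ref{T2.1} making $B_\infty^{(j)}(0)$ positive so that absolute values may be dropped. Taking the product over $j$, the divisor telescopes — the poles over the eigenvalues of $J^{(1)},\dots,J^{(n-1)}$ cancel the zeros over the same points — leaving a character-automorphic function with unimodular boundary values whose only poles lie over the eigenvalues of $J$ and only zeros over those of $J^{(n)}$. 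By the strict interlacing of consecutive stripped spectra (Theorem~\ref{T3.1}), this is exactly the alternating Blaschke product of Theorem~4.9 of paper~I for the pair $(J,J^{(n)})$, and its value at $0$ is $K_n$.

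For the entropy factor, iterating \eqref{2.11} gives $|a_{j+1}M^{(j)}(e^{i\theta})|^2=\Ima M^{(j)}(e^{i\theta})/\Ima M^{(j+1)}(e^{i\theta})$ on the common set where these imaginary parts do not vanish, while $|\Ima M^{(j)}(e^{i\theta})|=\pi\,w^{(j)}(\x(e^{i\theta}))$ with $w^{(j)}$ the a.c.\ weight of $\mu_{J^{(j)}}$; hence the ratio above is $w^{(j)}/w^{(j+1)}$ pulled back from $\fre$. Since $\x$ pushes $d\theta/2\pi$ forward to the harmonic measure of $\bar\bbC\setminus\fre$ at $\infty$, namely $d\rho_\fre$, each integral equals $\tfrac12\int_\fre\log(w^{(j)}/w^{(j+1)})\,d\rho_\fre$; these are finite by Theorem~\ref{T2.1}(c), so summing and telescoping gives $\tfrac12\int_\fre\log(w/w^{(n)})\,d\rho_\fre$. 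Writing out $Z(J)=-\tfrac12 S(\rho_\fre\mid\mu_J)$ via \eqref{4.8} and \eqref{4.6a}, the $\log(d\rho_\fre/dx)$ terms cancel in the difference, so $Z(J^{(n)})-Z(J)=\tfrac12\int_\fre\log(w/w^{(n)})\,d\rho_\fre$, the entropy factor is $\exp[Z(J^{(n)})-Z(J)]$, and this completes the proof.

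The main obstacle is the bookkeeping when $J$ has infinitely many bound states: the eigenvalue series defining $K_n$ and each $B_\infty^{(j)}(0)$ are only conditionally convergent, and one cannot legitimately split $\prod_j B_\infty^{(j)}$ into a quotient of two honest Blaschke products, since the products over the bound states alone need not converge. The remedy is to keep the paired (alternating) products intact throughout and to lean on Section~\ref{s3} — the strict interlacing of Theorem~\ref{T3.1} and the uniform near-edge eigenvalue estimates — to ensure both that the relevant alternating Blaschke products converge and that the telescoping of the conditionally convergent sums is valid. This is precisely what the alternating Blaschke product machinery of paper~I is designed to handle.
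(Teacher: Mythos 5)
Your proposal is correct and follows essentially the same route as the paper: reduce to the one-step $C_0$ sum rule (Theorem~\ref{T2.2}), identify the Blaschke factor at $0$ with the Green's function sum via \eqref{1.19x} and its two-variable analogue, convert the boundary integral to $\tfrac12\int_\fre\log(w/w^{(1)})\,d\rho_\fre$ by Corollary~4.6 of paper~I, and iterate. The only cosmetic differences are that you telescope the $n$ one-step identities explicitly rather than iterating the $n=1$ result, and your assertion that the telescoped divisor is literally the alternating Blaschke product for the pair $(J,J^{(n)})$ slightly overstates matters for $n>1$ (those two spectra need not interlace term by term), but only the value at $0$ is needed and that is covered by the iterated conditional convergence you correctly invoke from Section~\ref{s3}.
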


\begin{remark} By Theorem~\ref{T3.1}, and the monotonicity of $G_\fre$ near gap edges (eqns.\ (4.45) and
(4.46) of paper~I), the sum in \eqref{4.5x} is always conditionally convergent if ordered properly.
\end{remark}

\begin{proof} By iterating, it suffices to prove the result for $n=1$. As noted, $K_1$ is always finite and
the remarks before the statement of the theorem show that for $n=1$, $K_1=B_\infty (0)$. Thus, the step-by-step
$C_0$ sum rule says
\begin{equation} \lb{4.6x}
\f{a_1}{\ca(\fre)} = K_1 \exp\biggl( \f12 \int_0^{2\pi} \log
\biggl( \f{\Ima M(e^{i\theta})}{\Ima M^{(1)}(e^{i\theta})}\biggr)
\, \f{d\theta}{2\pi}\biggr)
\end{equation}

Since $M$ and so $\Ima M$ is automorphic, Corollary~4.6 of paper~I implies
\begin{equation} \lb{4.7x}
\int_0^{2\pi} \log \biggl( \f{\Ima M(e^{i\theta})}{\Ima M^{(1)}(e^{i\theta})}\biggr) \f{d\theta}{2\pi} =
\int_\fre \log \biggl( \f{w(x;J)}{w(x;J^{(1)})}\biggr) d\rho_\fre(x)
\end{equation}
where we use
\begin{equation} \lb{4.14a}
w(x;J)=\f{1}{\pi}\, \Ima m(x+i0,J)
\end{equation}
Thus,
\begin{equation}
\int_\fre \log (w(x;J^{(1)}))\, d\rho_\fre(x) > -\infty \,\Leftrightarrow\,
\int_\fre \log(w(x;J))\, d\rho_\fre(x) >-\infty
\end{equation}
showing $Z(J^{(1)}) <\infty \Leftrightarrow Z(J)<\infty$. Moreover, if both are finite,
\begin{equation} \lb{4.8x}
\text{RHS of \eqref{4.7x}} = 2Z(J^{(1)})-2Z(J)
\end{equation}
\eqref{4.6x}--\eqref{4.8x} imply \eqref{4.4x}.
\end{proof}

\begin{proposition} \lb{P4.3} We have that
\begin{equation} \lb{4.9x}
K_n \leq A_n e^{Z(J)}
\end{equation}
In particular, for some constant $C_1$,
\begin{equation} \lb{4.10x}
\ul{A\!} \,(J) \geq e^{-Z(J)} \liminf [\exp (-C_1 \calE (J^{(n)}))]
\end{equation}
and
\begin{equation} \lb{4.11x}
\limsup K_n \leq \bar A(J) e^{Z(J)}
\end{equation}
\end{proposition}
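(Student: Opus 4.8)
The plan is to read off all three inequalities from the step-by-step $C_0$ sum rule in the form of Theorem~\ref{T4.2}, using only the nonnegativity \eqref{4.9} of $Z$ and (for \eqref{4.10x}) the standard square-root vanishing of the equilibrium Green's function $G_\fre$ at the edges of $\fre$. First I would prove \eqref{4.9x}. If $Z(J)=\infty$ there is nothing to prove: $K_n$ is finite (the sum in \eqref{4.5x} converges, as noted in the Remark following Theorem~\ref{T4.2}), while the right-hand side is $+\infty$. If $Z(J)<\infty$, then $Z(J^{(n)})<\infty$ and \eqref{4.4x} reads $A_n=K_n\exp[Z(J^{(n)})-Z(J)]$; since $Z(J^{(n)})\geq 0$ by \eqref{4.9}, this gives $K_n=A_n\exp[Z(J)-Z(J^{(n)})]\leq A_n e^{Z(J)}$, which is \eqref{4.9x}. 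Taking $\limsup_n$ in \eqref{4.9x} then yields \eqref{4.11x} immediately (reading the right side as $+\infty$ in the degenerate case $Z(J)=\infty$).

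For \eqref{4.10x} I would again use \eqref{4.4x}, this time written as $A_n=K_n e^{Z(J^{(n)})}e^{-Z(J)}\geq K_n e^{-Z(J)}$, so it suffices to produce a constant $C_1<\infty$, independent of $n$, with $K_n\geq\exp(-C_1\calE(J^{(n)}))$, i.e.\ with
\[
\sum_k\bigl[G_\fre(x_k(J))-G_\fre(x_k(J^{(n)}))\bigr]\geq -C_1\,\calE(J^{(n)}).
\]
Because $J^{(n)}$ is a compression of $J$ we have $\norm{J^{(n)}}\leq\norm{J}$, so every eigenvalue of every $J^{(n)}$ lies in the fixed compact set $[-\norm{J},\norm{J}]$ and outside $\fre$. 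On this set $G_\fre$ is bounded and, near the finitely many endpoints of $\fre$, satisfies $G_\fre(x)\leq C\,\dist(x,\fre)^{1/2}$ by the standard edge behavior of the equilibrium Green's function of a finite union of intervals (cf.\ paper~I, eqns.\ (4.45)--(4.46) and the density bound \eqref{4.11}); combining these bounds gives a single $C_1$ with $G_\fre(x)\leq C_1\dist(x,\fre)^{1/2}$ uniformly over all these eigenvalues. Hence $\sum_k G_\fre(x_k(J^{(n)}))\leq C_1\calE(J^{(n)})$, while $\sum_k G_\fre(x_k(J))\geq 0$ since $G_\fre\geq 0$ off $\fre$; subtracting gives the displayed inequality, and $\liminf_n$ yields \eqref{4.10x}.

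The only real bookkeeping is disposing of the degenerate cases and justifying the splitting of the conditionally convergent sum in \eqref{4.5x}. When $\calE(J)<\infty$ (hence, by Theorem~\ref{T3.5}(i) applied to $J=(Q_nJQ_n\oplus P_nJP_n)+(\text{rank }2)$, all $\calE(J^{(n)})<\infty$), all the eigenvalue sums are absolutely convergent and the splitting is legitimate; if instead $\calE(J)=\infty$, the same rank-two comparison forces $\calE(J^{(n)})=\infty$ for every $n$, so the right side of \eqref{4.10x} is $0$ and the bound is trivial, and similarly $Z(J)=\infty$ makes the right sides of \eqref{4.11x} and \eqref{4.10x} equal to $+\infty$ and $0$ respectively. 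Thus the substantive content is entirely in Theorem~\ref{T4.2} together with the edge estimate for $G_\fre$, both already in hand; I expect no genuine obstacle beyond this case-checking.
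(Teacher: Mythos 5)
Your proof is correct and follows essentially the same route as the paper: read \eqref{4.9x} and \eqref{4.11x} off the sum rule \eqref{4.4x} using $Z(J^{(n)})\geq 0$, and get \eqref{4.10x} from $K_n\geq\exp(-\sum_k G_\fre(x_k(J^{(n)})))$ together with the edge bound $G_\fre(x)\leq C_1\dist(x,\fre)^{1/2}$ from Theorem~4.4 of paper~I. Your extra care with the degenerate cases and with splitting the conditionally convergent sum in \eqref{4.5x} is reasonable but not a different argument.
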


\begin{proof} \eqref{4.9x} is immediate from \eqref{4.4x} if we note that $Z(J^{(n)})\geq 0$ so that $\exp(-Z(J^{(n)}))\leq 1$. \eqref{4.10x} follows from noting that $K_n \geq \exp (-\sum_k G_\fre (x_k(J^{(n)})))$ since $G_\fre (x_k(J))
\geq 0$ and then, that for some $C_1$ (depending only on $\fre$),
\begin{equation} \lb{4.12x}
G_\fre (x) \leq C_1 \, \dist (x,\fre)^{1/2}
\end{equation}
by Theorem~4.4 of paper~I. Finally, \eqref{4.11x} is immediate by taking $\limsup$ in \eqref{4.9x}.
\end{proof}

\begin{proposition} \lb{P4.4} Let $J_\fre$ be the Jacobi matrix with spectral measure $d\rho_\fre$ and let $\{a_n^{(\fre)},
b_n^{(\fre)}\}_{n=1}^\infty$ be its Jacobi parameters. Let $J_n$ be the Jacobi matrix with parameters
\begin{align} \lb{4.13}
a_m (J_n) &= \begin{cases}
a_m & m=1, \dots, n\\
a_{m-n}^{(\fre)} & m>n
\end{cases} \\
b_m (J_n) &= \begin{cases}
b_m & m=1, \dots, n \\
b_{m-n}^{(\fre)} & m>n
\end{cases}
\end{align}
Then
\begin{equation} \lb{4.14}
A_n(J) = \exp\biggl( \sum_k G_\fre (x_k(J_n))\biggr) \exp (-Z(J_n))
\end{equation}
In particular, for some $C_1$ {\rm{(}}depending only on $\fre${\rm{)}},
\begin{equation} \lb{4.15}
A_n(J) \leq \exp (C_1\calE (J_n)-Z(J_n))
\end{equation}
\end{proposition}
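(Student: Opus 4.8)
The plan is to deduce Proposition~\ref{P4.4} from the step-by-step sum rule of Theorem~\ref{T4.2} by taking $n$-times stripped matrices of $J_n$ rather than of $J$. The key observation is that $J_n$ is constructed so that its first $n$ Jacobi parameters agree with those of $J$, while $J_n^{(n)}$ is exactly $J_\fre$, the Jacobi matrix of the equilibrium measure $d\rho_\fre$. Thus the $C_0$ sum rule \eqref{4.4x} applied to the pair $(J_n, J_n^{(n)})$ reads
\begin{equation}
\f{a_1(J_n)\cdots a_n(J_n)}{\ca(\fre)^n} = K_n(J_n)\exp[Z(J_n^{(n)})-Z(J_n)],
\end{equation}
and since $a_m(J_n)=a_m(J)$ for $m=1,\dots,n$, the left side is precisely $A_n(J)$.

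Next I would evaluate the two factors on the right. For the exponential of $Z$: since $J_n^{(n)}=J_\fre$ has spectral measure $d\rho_\fre$, one has $Z(J_\fre)=-\tfrac12 S(\rho_\fre\mid\rho_\fre)=0$, so $\exp[Z(J_n^{(n)})-Z(J_n)]=\exp(-Z(J_n))$. For $K_n$: by \eqref{4.5x}, $K_n(J_n)=\exp(\sum_k [G_\fre(x_k(J_n))-G_\fre(x_k(J_n^{(n)}))])$, and since $J_n^{(n)}=J_\fre$ has essential spectrum exactly $\fre$ with no eigenvalues outside $\fre$ (its spectral measure is supported on $\fre$), the second sum is empty. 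Hence $K_n(J_n)=\exp(\sum_k G_\fre(x_k(J_n)))$, which gives \eqref{4.14}. The bound \eqref{4.15} then follows immediately from \eqref{4.12x} in Proposition~\ref{P4.3}, namely $G_\fre(x)\le C_1\dist(x,\fre)^{1/2}$, summed over the eigenvalues $x_k(J_n)$, together with $Z(J_n)\ge 0$ being dropped—wait, actually one keeps $-Z(J_n)$ since it only helps, so $A_n(J)\le\exp(C_1\calE(J_n)-Z(J_n))$ as stated (here I should be careful to note $\calE(J_n)<\infty$, which needs $J_\fre$ to have finite $\calE$, i.e.\ no eigenvalues, so $\calE(J_\fre)=0$, and then Theorem~\ref{T3.8}-type reasoning is not even needed since $J_n$ has only finitely many parameters differing from $J_\fre$—but in fact $\calE(J_n)$ may be genuinely infinite a priori, so the bound \eqref{4.15} is vacuous in that case and still correct).

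The main subtlety to check carefully is the applicability of Theorem~\ref{T4.2} to $J_n$: that theorem is stated for a Jacobi matrix with $\sigma_\ess=\fre$, and one must confirm $\sigma_\ess(J_n)=\fre$. This holds because stripping finitely many parameters does not change essential spectrum, and $J_n$ is a finite-rank-type modification that agrees with $J_\fre$ (which has $\sigma_\ess=\fre$) from position $n+1$ onward; alternatively, $J_n^{(n)}=J_\fre$ so $\sigma_\ess(J_n)=\sigma_\ess(J_n^{(n)})=\sigma_\ess(J_\fre)=\fre$. I would also want to double-check that the equivalence ``$Z(J_n)<\infty\Leftrightarrow Z(J_n^{(n)})<\infty$'' from Theorem~\ref{T4.2} is not needed as a hypothesis here—it is automatic since $Z(J_\fre)=0<\infty$, so \eqref{4.14} is an unconditional identity (with the convention that $\exp(-\infty)=0$ if $Z(J_n)=\infty$). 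Apart from these bookkeeping points, the proof is a direct substitution into the already-established sum rule, so I expect no real obstacle.
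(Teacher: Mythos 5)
Your proposal is correct and follows exactly the paper's argument: apply the iterated step-by-step $C_0$ sum rule \eqref{4.4x} to $J_n$, using $(J_n)^{(n)}=J_\fre$, $A_n(J_n)=A_n(J)$, $Z(J_\fre)=0$, and the absence of eigenvalues of $J_\fre$ outside $\fre$ to reduce $K_n$ to $\exp(\sum_k G_\fre(x_k(J_n)))$, then invoke \eqref{4.12x} for \eqref{4.15}. The extra bookkeeping you flag (that $\sigma_\ess(J_n)=\fre$ and that the finiteness equivalence is automatic since $Z(J_\fre)=0$) is sound and does not change the route.
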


\begin{proof} $J_n$ is defined so that
\begin{equation} \lb{4.16}
(J_n)^{(n)} = J_\fre
\end{equation}
and
\begin{equation} \lb{4.17}
A_n (J_n) = A_n (J)
\end{equation}

Thus, since $Z(J_\fre) =0$ and $J_\fre$ has no eigenvalues outside $\fre$, \eqref{4.4x} for $J_n$ is
\eqref{4.14}. \eqref{4.15} is then immediate from \eqref{4.12x}.
\end{proof}

\begin{theorem} \lb{T4.5} If $\calE(J)<\infty$, then
\begin{equation} \lb{4.18}
\bar A(J) >0 \,\Leftrightarrow\, Z(J) <\infty
\end{equation}
and if these are true, the Widom condition holds:
\begin{equation} \lb{4.19}
0 < \ul{A\!} \,(J) \leq \bar A(J) <\infty
\end{equation}
\end{theorem}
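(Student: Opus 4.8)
The plan is to exploit the two identities already established: the telescoped $C_0$ sum rule in the form \eqref{4.4x}, namely $A_n(J) = K_n \exp[Z(J^{(n)}) - Z(J)]$, together with the ``reversed'' version \eqref{4.14} coming from Proposition~\ref{P4.4}, namely $A_n(J) = \exp(\sum_k G_\fre(x_k(J_n)))\exp(-Z(J_n))$. The hypothesis $\calE(J)<\infty$ will be used through Theorem~\ref{T3.7} (to bound $\sup_n \calE(J^{(n)})$) and Theorem~\ref{T3.8} (to bound $\calE(J_n)$ uniformly in $n$), so that all the Green's-function factors $K_n$ and $\exp(\sum_k G_\fre(x_k(J_n)))$ are bounded above and below by positive constants independent of $n$, via the estimate $0 \le G_\fre(x) \le C_1 \dist(x,\fre)^{1/2}$ from \eqref{4.12x}.

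For the direction $Z(J)<\infty \Rightarrow \bar A(J)>0$: first I would observe from Theorem~\ref{T4.2} (or \eqref{4.4x} directly) that $Z(J)<\infty$ forces $Z(J^{(n)})<\infty$ for all $n$, and then I would need a uniform bound $\sup_n Z(J^{(n)}) < \infty$. This is the crucial estimate. I expect to get it from the sum rule run the other way: combining \eqref{4.4x} with the upper bound on $A_n$ is not enough, so instead I would use that $Z(J^{(n)})$ telescopes against the boundary-entropy contributions and control it by $Z(J)$ plus the eigenvalue sums $\calE(J^{(n)})$, which are uniformly bounded by Theorem~\ref{T3.7}(i). Granting $\sup_n Z(J^{(n)}) =: Z_\infty < \infty$, formula \eqref{4.4x} gives $A_n(J) = K_n \exp[Z(J^{(n)})-Z(J)] \ge (\inf_n K_n) e^{-Z(J)} \cdot e^{0}$... but $Z(J^{(n)})\ge 0$ already does it: $A_n(J) \ge (\inf_n K_n)\, e^{-Z(J)} > 0$, since $\inf_n K_n \ge \exp(-C_1 \sup_n \calE(J^{(n)})) > 0$ by \eqref{4.12x} and Theorem~\ref{T3.7}(i). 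Hence $\ul{A\!}(J) > 0$, which is even stronger than $\bar A(J)>0$ and simultaneously gives the left half of \eqref{4.19}.

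For the converse $\bar A(J)>0 \Rightarrow Z(J)<\infty$, and for the remaining bound $\bar A(J)<\infty$, I would use \eqref{4.14}/\eqref{4.15}: since $\calE(J_n)$ is bounded uniformly in $n$ (Theorem~\ref{T3.8}, as $(J_n)^{(n)} = J_\fre$ so $J_n$ is a two-block splice of $J$ and $J_\fre$, both with finite $\calE$), we get $A_n(J) \le \exp(C_1 \sup_n \calE(J_n)) \cdot \exp(-Z(J_n)) \le \exp(C_1 \sup_n \calE(J_n)) < \infty$ because $Z(J_n)\ge 0$; taking $\limsup$ gives $\bar A(J) < \infty$, the right half of \eqref{4.19}. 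Conversely, \eqref{4.14} also yields the lower bound $A_n(J) \ge \exp(-C_1 \sup_n \calE(J_n))\,\exp(-Z(J_n))$, equivalently $Z(J_n) \ge -\log A_n(J) - C_1 \sup_n\calE(J_n)$. If $\bar A(J)>0$, choose a subsequence $n_k$ with $A_{n_k}(J) \to \bar A(J) > 0$; then $\limsup_k Z(J_{n_k}) \le -\log \bar A(J) + C_1 \sup_n \calE(J_n) < \infty$. By construction $(J_{n_k})^{(n_k)} = J_\fre$ and $J_{n_k}$ agrees with $J$ in its first $n_k$ parameters, so by the joint weak upper semicontinuity of relative entropy, $d\mu_{J_{n_k}} \to d\mu_J$ weakly (the measures converge since the moments, determined by the Jacobi parameters, converge), whence $Z(J) = -\tfrac12 S(\rho_\fre \mid \mu_J) \le \liminf_k\bigl(-\tfrac12 S(\rho_\fre\mid\mu_{J_{n_k}})\bigr) = \liminf_k Z(J_{n_k}) < \infty$.

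The main obstacle I anticipate is the uniform bound $\sup_n Z(J^{(n)})<\infty$ (equivalently controlling the boundary entropy of all the stripped matrices at once); the cleanest route is probably to avoid it entirely and derive both implications of \eqref{4.18} from the single pair of identities \eqref{4.4x} and \eqref{4.14} together with the uniform $\calE$-bounds and the semicontinuity of $S$, exactly as in the last paragraph — in fact that paragraph already proves $\bar A(J)>0 \Rightarrow Z(J)<\infty$ without any $\sup_n Z(J^{(n)})$ estimate, and the forward direction $Z(J)<\infty \Rightarrow \ul{A\!}(J)>0$ needs only $Z(J^{(n)})\ge 0$ plus $\inf_n K_n>0$, so the whole theorem follows. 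I would write it up in that order: (1) uniform lower bound on $K_n$ from Theorem~\ref{T3.7}; (2) uniform upper bound on $\calE(J_n)$ from Theorem~\ref{T3.8}; (3) forward direction via \eqref{4.4x}; (4) both the $\bar A<\infty$ bound and the converse via \eqref{4.14} and weak upper semicontinuity of relative entropy.
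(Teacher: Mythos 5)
Your proposal is correct and follows essentially the same route as the paper: the forward direction is exactly \eqref{4.10x} (i.e., \eqref{4.4x} together with $Z(J^{(n)})\ge 0$ and $\inf_n K_n\ge \exp(-C_1\sup_n\calE(J^{(n)}))>0$ via Theorem~\ref{T3.7}), while the converse and the bound $\bar A(J)<\infty$ come from \eqref{4.15} with Theorem~\ref{T3.8} and the lower semicontinuity of $Z$ under the weak convergence $\mu_{J_{n_k}}\to\mu_J$. One sign slip to fix in the converse: you need the upper bound \eqref{4.15}, which rearranges to $Z(J_n)\le -\log A_n(J)+C_1\calE(J_n)$; the inequality $Z(J_n)\ge -\log A_n(J)-C_1\sup_n\calE(J_n)$ that you derived from the lower bound on $A_n(J)$ points the wrong way and does not yield $\limsup_k Z(J_{n_k})<\infty$, although the conclusion you then state is the correct one.
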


\begin{proof} By \eqref{4.10x} and Theorem~\ref{T3.7},
\begin{equation} \lb{4.20}
\calE(J), Z(J) <\infty \Rightarrow \ul{A\!} \,(J) >0 \Rightarrow \bar A(J) >0
\end{equation}

By \eqref{4.15} and Theorem~\ref{T3.8}, going through a subsequence with $A_{n_j}(J) \to \bar A(J)$, we see that
\begin{equation} \lb{4.21}
\calE(J)<\infty,\ \bar A(J)>0 \Rightarrow \limsup[\exp(-Z(J_{n_j}))]>0
\end{equation}
Thus, for some subsequence,
\begin{equation} \lb{4.22}
\liminf Z(J_{n_j})<\infty
\end{equation}
Since $J_{n_j}\overset{s}{\longrightarrow} J$, the spectral measures converge weakly. Since $S$ is upper semicontinuous,
$Z=-\f12 S$ is lower semicontinuous, and thus,
\begin{equation} \lb{4.23}
Z(J) \leq \liminf Z(J_{n_j})
\end{equation}
so \eqref{4.22} implies $Z(J)<\infty$. That is, we have proven
\begin{equation} \lb{4.24}
\calE(J) <\infty,\ \bar A(J) >0 \Rightarrow Z(J) <\infty
\end{equation}

If we have $Z(J) <\infty$ and $\calE(J) <\infty$, %and $\bar A(J) >0$,
we get $\ul{A}(J) >0$ by \eqref{4.20}, and since $Z(J_n) \geq 0$, \eqref{4.15} implies
\begin{equation} \lb{4.25}
\bar A(J) \leq \limsup [\exp (C_1 \calE(J_n))] <\infty
\end{equation}
by Theorem~\ref{T3.8}.
\end{proof}

\begin{remarks} 1. The above proof shows that even without $Z(J)<\infty$, we have $\calE(J)<\infty \Rightarrow
\bar A(J) <\infty$.

\smallskip
2. The proof borrows heavily from ideas of Killip--Simon \cite{KS} and Simon--Zlato\v{s} \cite{SZ}.

\smallskip
3. As noted, $\calE(J), Z(J)<\infty\Rightarrow$ \eqref{4.19} is a prior result (using variational
methods) of Peherstorfer--Yuditskii \cite{PY,PYarx}. Peherstorfer and Yuditskii \cite{Pehpc} have pointed out that their
results can be used to prove $\calE(J)<\infty,\ \bar A(J) >0 \Rightarrow Z(J)<\infty$ by the following
argument: While it is not explicitly stated, \cite{PY,PYarx} prove that for any $K$, there is a constant $C$ so that
for all measures with $Z(J)<\infty$ and $\calE(J)\leq K$,
\begin{equation} \lb{4.34}
\limsup_{n\to\infty} \f{a_1\cdots a_n}{\ca(\fre)^n} \leq Ce^{-Z(J)}
\end{equation}
Given $d\mu$ with $Z(J)=\infty$ and $\calE(J)\leq K$, let $d\ti\mu_\veps$ be the measure $d\mu+\veps\,
dx\restriction\fre$. Then with $d\mu_\veps$ the normalized measure and $a_n(\veps)$ the corresponding
$a$'s, \eqref{4.34} implies (since $Z(J_\veps)<\infty$)
\begin{equation} \lb{4.35}
\limsup \f{a_1(\veps)\cdots a_n(\veps)}{\ca(\fre)^n} \leq Ce^{-Z(J_\veps)}
\end{equation}
By the variational principle for $a_1\cdots a_n =\norm{P_n}$, we have
\begin{equation} \lb{4.36}
a_1\cdots a_n \leq [a_1 (\veps) \cdots a_n (\veps)] (1+\veps\abs{\fre})^{1/2}
\end{equation}
Since $Z(J_\veps) -\f12\log (1+\veps\abs{\fre})\uparrow Z(J)$, \eqref{4.35}--\eqref{4.36} imply that
$\bar A(J)=0$ if $Z(J)=\infty$. This argument for the classical Szeg\H{o} case is in Garnett \cite{Garn}.
\end{remarks}

\begin{theorem}\lb{T4.6} $\bar A(J), Z(J) <\infty \Rightarrow \calE(J) <\infty$
\end{theorem}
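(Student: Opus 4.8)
The plan is to exploit the step-by-step $C_0$ sum rule in the form of Theorem~\ref{T4.2}, specifically the factorization \eqref{4.4x} applied to the interpolated matrices $J_n$ of Proposition~\ref{P4.4}, which satisfy $(J_n)^{(n)}=J_\fre$, $A_n(J_n)=A_n(J)$, and hence \eqref{4.14}:
\[
A_n(J) = \exp\biggl(\sum_k G_\fre(x_k(J_n))\biggr)\exp(-Z(J_n)).
\]
The left side is bounded below along a subsequence $n_j$ with $A_{n_j}(J)\to\bar A(J)>0$. Since $Z(J_n)\geq 0$, this already forces $\sum_k G_\fre(x_k(J_{n_j}))$ to stay bounded along that subsequence. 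The issue is that $G_\fre$ behaves like $\dist(x,\fre)^{1/2}$ only near the gap edges (compare \eqref{4.12x}), but can only \emph{degenerate} (go to zero faster) in the interior of gaps or far from $\fre$—so a bound on $\sum_k G_\fre(x_k(J_{n_j}))$ does give a bound on $\sum_k \dist(x_k(J_{n_j}),\fre)^{1/2}$ after one checks the edge asymptotics (eqns.\ (4.45)--(4.46) of paper~I) yield a lower bound $G_\fre(x)\geq c\,\dist(x,\fre)^{1/2}$ uniformly for $x$ in a neighborhood of each edge, together with the trivial fact that only finitely many eigenvalues can lie outside such neighborhoods with $\calE$ contribution bounded by $\norm{J_n}$ (uniformly bounded). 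So: $\sup_j \calE(J_{n_j})<\infty$.

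Next I would transfer this uniform bound back to $J$ itself. Here the key is that $J_{n_j}\overset{s}{\longrightarrow}J$ (strong convergence, since $J_{n_j}$ agrees with $J$ on the first $n_j$ coordinates), and that the construction of $J_{n_j}$ from $J$ is exactly of the type covered by Theorem~\ref{T3.10}—or more directly, since we want to go \emph{from} $J_{n_j}$ back \emph{to} $J$, we need the no-loss-of-spectrum statement that eigenvalues of $J$ in a gap (staying away from the edges by $\delta$) are approached by eigenvalues of $J_{n_j}$, with a bound on the number. The cleanest route is a Fatou-type argument: fix $\delta>0$; for each eigenvalue $x_k(J)$ of $J$ in $(\beta_j+\delta,\alpha_{j+1}-\delta)$ there is, for $j$ large, an eigenvalue of $J_{n_j}$ converging to it (approximate-eigenvector argument, exactly as in the proof of Theorem~\ref{T3.9}/\ref{T3.10}), and distinct eigenvalues of $J$ get distinct approximants. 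Hence
\[
\sum_{x_k(J)\in(\beta_j+\delta,\alpha_{j+1}-\delta)} \dist(x_k(J),\fre)^{1/2} \leq \liminf_j \calE(J_{n_j}) \leq \sup_j \calE(J_{n_j}) <\infty,
\]
and a parallel (easier) min-max bound handles eigenvalues above $\beta_{\ell+1}$ and below $\alpha_1$. Letting $\delta\downarrow 0$ and using that the edge contributions are controlled by Theorem~\ref{T3.7}(ii)/Theorem~\ref{T3.8}-type uniform smallness near edges (already established, since $\sup_j\calE(J_{n_j})<\infty$ and the tails near edges shrink uniformly) gives $\calE(J)<\infty$.

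The main obstacle I expect is the edge behavior: one must be careful that the subsequential bound on $\sum_k G_\fre(x_k(J_{n_j}))$ really does bound $\sum_k \dist(x_k(J_{n_j}),\fre)^{1/2}$, which requires the two-sided estimate $c_1\dist(x,\fre)^{1/2}\leq G_\fre(x)$ near edges (not just the upper bound \eqref{4.12x} used elsewhere)—this follows from the explicit local form of $G_\fre$ at gap edges in paper~I, where $G_\fre$ has a square-root vanishing with nonzero coefficient. The second delicate point is ensuring that, when passing to the limit $J_{n_j}\to J$, no eigenvalue mass of $J$ in the interior of a gap is "invisible" to the approximants; this is precisely the content of the approximate-eigenvector half of the argument in Theorems~\ref{T3.9}--\ref{T3.10}, and with those available the transfer is routine. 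The $\delta\downarrow 0$ limit at the end is harmless because the near-edge tails are uniformly small by the estimates of Section~\ref{s3}.
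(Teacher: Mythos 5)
There is a genuine gap at the very first step, and it is fatal to the route you chose. From \eqref{4.14}, $A_{n_j}(J)=\exp\bigl(\sum_k G_\fre(x_k(J_{n_j}))\bigr)\exp(-Z(J_{n_j}))$, the two facts you invoke --- that $A_{n_j}(J)$ is bounded below and that $Z(J_{n_j})\geq 0$, i.e.\ $\exp(-Z(J_{n_j}))\leq 1$ --- yield only a \emph{lower} bound on $\sum_k G_\fre(x_k(J_{n_j}))$, which is useless since that sum is nonnegative to begin with. To get the upper bound you actually need, you would have to bound $Z(J_{n_j})$ from \emph{above}, and no such bound is available: $Z$ is only lower semicontinuous in the measure, and in the proof of Theorem~\ref{T4.5} the finiteness of $\liminf Z(J_{n_j})$ is extracted from \eqref{4.15} precisely by first controlling $\calE(J_{n_j})$ via Theorem~\ref{T3.8}, which requires $\calE(J)<\infty$ --- the very conclusion you are trying to prove. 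So the claim ``this already forces $\sum_k G_\fre(x_k(J_{n_j}))$ to stay bounded'' is false as stated. A secondary defect: your argument uses $\bar A(J)>0$, which is not part of the hypothesis of Theorem~\ref{T4.6} (the paper's proof covers $\bar A(J)=0$ as well).

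The inequality that does go the right way is the one relating $J$ to its \emph{stripped} matrices $J^{(n)}$ rather than to the glued matrices $J_n$ of Proposition~\ref{P4.4}. Writing \eqref{4.4x} as $K_n=A_n\exp(Z(J)-Z(J^{(n)}))$ and using $Z(J^{(n)})\geq 0$ gives \eqref{4.9x}, $K_n\leq A_n e^{Z(J)}$, hence $\limsup K_n\leq\bar A(J)\,e^{Z(J)}<\infty$, which is \eqref{4.11x}; here the sign of the unknown entropy term helps instead of hurts. Since $\log K_n=\sum_k[G_\fre(x_k(J))-G_\fre(x_k(J^{(n)}))]$, this is a uniform upper bound on the \emph{difference} of eigenvalue sums, and Theorem~\ref{T3.2} is designed exactly to convert such a bound into $\sum_k G_\fre(x_k(J))<\infty$ (its proof uses Theorem~\ref{T3.4} to show the $J^{(n)}$ contribution in the interior of each gap is asymptotically negligible). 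The two-sided comparison of $G_\fre$ with $\dist(\,\cdot\,,\fre)^{1/2}$ near the band edges, which you correctly flag as necessary, then gives $\calE(J)<\infty$. In short: your edge analysis and the Fatou-type transfer are sound in spirit, but the uniform bound must come from $K_n$ and the stripped matrices via \eqref{4.11x} and Theorem~\ref{T3.2}, not from \eqref{4.14}.
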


\begin{proof} This is immediate from \eqref{4.11x} and Theorem~\ref{T3.2}.
\end{proof}

\begin{remark} This argument follows ideas of Simon--Zlato\v{s} \cite{SZ}.
\end{remark}

Theorems~\ref{T4.5} and \ref{T4.6} imply Theorem~\ref{T4.1}.

%%%%%%%%%%%%%%%%%%%%%%%%%%%%%%%%%%%%%%%%%%%%%%%%%%%%%%%%%%%%%%
\section{Jost Functions and Jost Solutions} \lb{s5}
%%%%%%%%%%%%%%%%%%%%%%%%%%%%%%%%%%%%%%%%%%%%%%%%%%%%%%%%%%%%%%

In Section~8 of paper~I, we defined the Szeg\H{o} class for $\fre$, which we'll denote $\Sz(\fre)$, to be
the set of probability measures, $d\mu$, of the form \eqref{4.1} that obey \eqref{4.3} and \eqref{4.4}.
As usual, we associate $d\mu$ with its Jacobi matrix and Jacobi parameters $\{a_n,b_n\}_{n=1}^\infty$,
which we will write as $\{a_n(\mu), b_n(\mu)\}_{n=1}^\infty$ if we need to be explicit about the
measures. Of course, the $a$'s obey the Widom condition \eqref{4.6} for all measures in the Szeg\H{o} class.

In this section, we want to recall the definitions of Jost function and Jost solution from Sections~8
and 9 of paper~I, extend some results on Jost solutions to the full Szeg\H{o} class, and state the main
theorem that we'll prove in the next section about their asymptotics.

Jost functions require a reference measure, and we'll use the one from paper~I. Let $\ti\zeta_j\in\wti C_j^+$,
the full orthocircle, be the point farthest from $0$ on $\wti C_j^+$ and let $w_j\in\calS$, the Riemann surface
for $\fre$, be given by $w_j=\x^\sharp (\ti\zeta_j)$. Each $w_j$ lies in $G_j =\pi^{-1}([\beta_j, \alpha_{j+1}])$,
so $\vw =(w_1, \dots, w_\ell)\in\bbG =G_1 \times \cdots \times G_\ell$, which can be associated with the
isospectral torus. Our reference measure is the measure in $\calT_\fre$ associated to $\vw$. We denote it by
\begin{equation} \lb{5.0}
d\nu_\fre(x)=v_\fre(x)\,dx
\end{equation}
We point out that while our choice of the reference measure is convenient, one can take any other measure
in the Szeg\H{o} class to be the reference measure.

Given $d\mu\in\Sz(\fre)$, let $\{x_k\}$ be the eigenvalues of $J$ in $\bbR\setminus\fre$ and define $z_k\in\calF$ by
\begin{equation} \lb{5.1}
\x(z_k) = x_k
\end{equation}
The Jost function is then defined on $\bbD$ by
\begin{equation} \lb{5.2}
u(z;\mu) = \prod_{k} B(z,z_k) \exp \biggl(\f{1}{4\pi} \int_0^{2\pi} \f{e^{i\theta}+z}{e^{i\theta}-z}\, \log
\biggl(\f{v_\fre(\x(e^{i\theta}))}{w(\x(e^{i\theta}))}\biggr) d\theta\biggr)
\end{equation}
Since \eqref{4.14a} implies
\begin{equation} \lb{5.3}
\f{v_\fre(\x(e^{i\theta}))}{w(\x(e^{i\theta}))} = \f{\Ima M_{\nu_\fre}(e^{i\theta})}{\Ima M_\mu(e^{i\theta})}
\end{equation}
we could use that ratio instead. By the Blaschke condition and Proposition~4.8 of paper~I, the product in
\eqref{5.2} (which we'll call the Blaschke part) converges. By eqn.~(4.54) of paper~I and
the Szeg\H{o} condition for $d\mu$ and $d\nu_\fre$, the $\log$ in \eqref{5.2} is in $L^1 (\partial\bbD, d\theta/2\pi)$.
We call the exponential in \eqref{5.2} the Szeg\H{o} part. As proven in Theorem~8.2 of paper~I, $u$ is
a character automorphic function on $\bbD$.

For any Jacobi matrix, $J$, with $\sigma_\ess(J)=\fre$, we let $M^{(n)}$ be the $m$-function \eqref{1.21} of the $n$-times
stripped Jacobi matrix, $J^{(n)}$, and define the {\it Weyl solution\/} by
\begin{equation} \lb{5.4}
W_n(z) = M(z) (a_1 M^{(1)}(z)) \cdots (a_{n-1} M^{(n-1)}(z))
\end{equation}
$M^{(k)}$ has poles at the inverse images of eigenvalues of $J^{(k)}$ and zeros at the inverse images of
eigenvalues of $J^{(k+1)}$, so there is a cancellation, and $W_n$ can be defined as meromorphic on $\bbD$
with poles exactly at the points $\zeta$ with $\x(\zeta)$ an eigenvalue of $J$.

The name, Weyl solution, comes from the fact that because $m$ is a ratio of solutions $L^2$ at $n=+\infty$,
$W_n$ obeys
\begin{equation} \lb{5.4x}
W_n(z) = -\jap{\delta_n, (J-\x(z))^{-1}\delta_1}
\end{equation}
so that for $k\geq 2$,
\begin{equation} \lb{5.5}
[(J-\x(z)) W_\bddot (z)]_k =0
\end{equation}
where $W_\bddot(z)$ is the vector $(W_1(z),W_2(z), \dots)$. That is,
\begin{equation} \lb{5.6}
a_n W_n(z) + b_{n+1} W_{n+1}(z) + a_{n+1} W_{n+2}(z) = \x(z) W_{n+1}(z)
\end{equation}
for $n=1,2,\dots$.

The Jost solution is defined by
\begin{equation} \lb{5.7}
u_n(z;\mu) = u(z;\mu) W_n(z)
\end{equation}
Since $u(z;\mu)$ is $n$-independent, \eqref{5.6} holds for $u_n$ also. Since $u$ has zeros at the points where
$M$, and so $W_n$, has poles, $u_n$ is analytic on $\bbD$.

\begin{theorem}\lb{T5.1}
\begin{equation} \lb{5.8}
a_n M^{(n-1)}(z)=B(z)\, \f{u(z;\mu_n)}{u(z;\mu_{n-1})}
\end{equation}
where $M^{(0)}=M$, $d\mu_{0}=d\mu$, and $d\mu_n$, $M^{(n)}$ are associated to $J^{(n)}$, the $n$-times
stripped Jacobi matrix.
\end{theorem}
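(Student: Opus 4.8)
The plan is to prove the identity first for $n=1$ and arbitrary $d\mu\in\Sz(\fre)$, and then obtain the general $n$ by coefficient stripping. Note that if $d\mu\in\Sz(\fre)$ then so is $d\mu^{(1)}=d\mu_1$: its Blaschke condition follows from Theorem~\ref{T3.7} and its Szeg\H{o} condition from Theorem~\ref{T4.2} together with \eqref{4.10}, so the Jost function $u(\,\cdot\,;\mu_1)$ is indeed defined, and by induction every $d\mu_n$ is in $\Sz(\fre)$. Once the $n=1$ case is known for every measure in the Szeg\H{o} class, we apply it with $J^{(n-1)}$ in place of $J$: since $a_1(J^{(n-1)})=a_n(J)$, $M(z;J^{(n-1)})=M^{(n-1)}(z)$, and the once-stripped measure of $\mu_{n-1}$ is $\mu_n$, this reads $a_n M^{(n-1)}(z)=B(z)\,u(z;\mu_n)/u(z;\mu_{n-1})$, which is the claim.

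For the $n=1$ case I would compare both sides with the nonlocal step-by-step sum rule. In the form \eqref{2.3} that rule says
\[
a_1 M(z) = B(z)\,B_\infty(z)\,\exp\Bigl(\int \frac{e^{i\theta}+z}{e^{i\theta}-z}\log\Bigl(\frac{\Ima M(e^{i\theta})}{\Ima M^{(1)}(e^{i\theta})}\Bigr)\frac{d\theta}{4\pi}\Bigr),
\]
where $B_\infty$ is the alternating Blaschke product with poles on the $\Gamma$-orbit of the inverse images $\{z_k(J)\}$ of the eigenvalues of $J$ and zeros on the orbit of $\{z_k(J^{(1)})\}$. On the other side, the definition \eqref{5.2} together with \eqref{5.3} and the identification $M^{(1)}=M_{\mu_1}$ shows that $u(z;\mu_1)/u(z;\mu_0)$ is the product of a ``Blaschke part'' $\prod_k B(z,z_k(J^{(1)}))\big/\prod_k B(z,z_k(J))$ and a ``Szeg\H{o} part''
\[
\exp\Bigl(\frac{1}{4\pi}\int_0^{2\pi}\frac{e^{i\theta}+z}{e^{i\theta}-z}\Bigl[\log\frac{\Ima M_{\nu_\fre}}{\Ima M^{(1)}}-\log\frac{\Ima M_{\nu_\fre}}{\Ima M}\Bigr]d\theta\Bigr).
\]
Each of $\log(\Ima M_{\nu_\fre}/\Ima M)$ and $\log(\Ima M_{\nu_\fre}/\Ima M^{(1)})$ lies in $L^1(\partial\bbD,d\theta/2\pi)$ (the Szeg\H{o} conditions for $\mu$, $\mu^{(1)}$, and $\nu_\fre$, used just after \eqref{5.3}), and all the boundary ratios involved are positive a.e.\ since the imaginary parts all share the lip-dependent sign (Remark~2 after Theorem~\ref{T2.1}); hence the bracket may be split and recombined into $\log(\Ima M/\Ima M^{(1)})$. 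So the Szeg\H{o} part of $u(z;\mu_1)/u(z;\mu_0)$ equals exactly the exponential factor in the displayed sum rule, and after multiplying by $B(z)$ the whole problem reduces to the single identity
\[
B_\infty(z)=\frac{\prod_k B(z,z_k(J^{(1)}))}{\prod_k B(z,z_k(J))}.
\]

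This last identity is the main point and the one place where care is required. In the Szeg\H{o} class the Blaschke condition \eqref{4.4} combined with Proposition~4.8 of paper~I gives $\sum_{k,\gamma}(1-\abs{\gamma(z_k(J))})<\infty$ and likewise for $J^{(1)}$, so both the numerator and denominator converge absolutely and the quotient is a well-defined meromorphic function on $\bbD$ with zeros on the orbit of $\{z_k(J^{(1)})\}$ and poles on that of $\{z_k(J)\}$; by the strict interlacing of Theorem~\ref{T3.1} these two orbits are disjoint, so no cancellation takes place. On the other hand, the alternating Blaschke product of Theorem~4.9 of paper~I is built by pairing each $z_k(J)$ with the neighbouring $z_k(J^{(1)})$ along the appropriate boundary arc of $\calF$ (or along $(-1,0)$, $(0,1)$) and forming $\prod_k B(z,z_k(J^{(1)}))/B(z,z_k(J))$ in that paired order precisely so that the product converges even absent a Blaschke condition; here, where every factor converges absolutely, the order is irrelevant and $B_\infty$ coincides with the quotient above. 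Substituting this back gives $B(z)\,u(z;\mu_1)/u(z;\mu_0)=B(z)B_\infty(z)\exp(\cdots)=a_1M(z)$, proving the $n=1$ case, and the stripping iteration above then finishes the proof. The only bookkeeping to watch is that, by Remark~1 after Theorem~\ref{T2.1}, the eigenvalue inverse images may accumulate at up to $2\ell+2$ arcs, so the pairing defining $B_\infty$ — and hence the factorization of the Blaschke parts — must be organized arc by arc; absolute convergence then makes the recombination into the single products $\prod_k B(z,z_k(\cdot))$ harmless.
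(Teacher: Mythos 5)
Your argument is correct and is essentially the paper's own proof, which simply states that \eqref{5.8} is a rewrite of the nonlocal step-by-step sum rule \eqref{2.3} applied to $J^{(n-1)}$; you have filled in exactly the details that rewrite requires (that each $d\mu_n$ stays in $\Sz(\fre)$, that the Szeg\H{o} parts match after the reference weight $v_\fre$ cancels, and that absolute convergence under the Blaschke condition identifies the alternating product $B_\infty$ with the quotient of the two Blaschke parts). No gaps.
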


\begin{proof} This is a rewrite of \eqref{2.3} for $J^{(n-1)}$.
\end{proof}

\begin{theorem}\lb{T5.2} Let $d\mu\in\Sz(\fre)$. Then
\begin{equation} \lb{5.9}
u_n(z;\mu) = a_n^{-1} B(z)^n u(z;\mu_n)
\end{equation}
where $d\mu_n$ is the spectral measure for $J^{(n)}$, the $n$-times stripped Jacobi matrix.
\end{theorem}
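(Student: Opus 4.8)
The plan is to derive \eqref{5.9} purely formally from the two defining relations already in hand, namely $u_n(z;\mu)=u(z;\mu)W_n(z)$ together with
\[
W_n(z)=M(z)\,\bigl(a_1 M^{(1)}(z)\bigr)\cdots\bigl(a_{n-1}M^{(n-1)}(z)\bigr),
\]
and the factorization identity of Theorem~\ref{T5.1}, and then to telescope. Before doing so I would record that $d\mu_k\in\Sz(\fre)$ for every $k\ge 0$, so that each Jost function $u(\cdot\,;\mu_k)$ appearing below is well-defined: the Szeg\H{o} condition is preserved under coefficient stripping by Theorem~\ref{T4.2} (equivalently \eqref{4.10}), and the Blaschke condition $\calE(J^{(k)})<\infty$ holds by Theorem~\ref{T3.7}(i).

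The computation itself is the following. Theorem~\ref{T5.1}, with $n$ replaced by $k+1$, reads $a_{k+1}M^{(k)}(z)=B(z)\,u(z;\mu_{k+1})/u(z;\mu_k)$, so for $k=1,\dots,n-1$,
\[
a_k M^{(k)}(z)=\frac{a_k}{a_{k+1}}\,B(z)\,\frac{u(z;\mu_{k+1})}{u(z;\mu_k)},
\]
while the same identity with $n=1$ gives $M(z)=M^{(0)}(z)=a_1^{-1}B(z)\,u(z;\mu_1)/u(z;\mu_0)$, where $d\mu_0=d\mu$. Substituting all these into the product defining $W_n$, the constants $a_k/a_{k+1}$ telescope (together with the $a_1^{-1}$ from the $M$-factor) to $a_n^{-1}$, the ratios $u(z;\mu_{k+1})/u(z;\mu_k)$ telescope to $u(z;\mu_n)/u(z;\mu_0)$, and one collects exactly $n$ factors of $B(z)$ (one from $M^{(0)}$ and one from each of the $n-1$ factors $a_kM^{(k)}$). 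This yields
\[
W_n(z)=a_n^{-1}B(z)^n\,\frac{u(z;\mu_n)}{u(z;\mu)},
\]
and multiplying by $u(z;\mu)$ gives \eqref{5.9}.

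I do not expect a serious obstacle. The only points needing care are the bookkeeping of the telescoping indices and the observation that all identities above are between meromorphic functions on $\bbD$ whose a priori poles — located at the inverse images under $\x$ of the eigenvalues of the various $J^{(k)}$ — cancel. This is already handled in the text: $u_n(\cdot\,;\mu)$, $u(\cdot\,;\mu_n)$, and $B(\cdot)^n$ are each analytic on $\bbD$ (as noted right after \eqref{5.7}), so an identity valid off a discrete set extends to all of $\bbD$. Alternatively, one can simply avoid the issue by establishing the identity first on the open set where none of the $M^{(k)}$ has a pole or zero and then invoking analyticity.
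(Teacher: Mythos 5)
Your proof is correct and is essentially the paper's own argument: the paper also obtains \eqref{5.9} by applying the factorization \eqref{5.8} to each factor of the product \eqref{5.4} and telescoping to get $a_n W_n(z)=B(z)^n u(z;\mu_n)/u(z;\mu)$, then multiplying by $u(z;\mu)$ via \eqref{5.7}. Your added remarks (that each $d\mu_k$ stays in the Szeg\H{o} class, and that the pole/zero cancellations are handled by analyticity) are sound and merely make explicit what the paper leaves implicit.
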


\begin{proof} By \eqref{5.8} and \eqref{5.4},
\begin{equation} \lb{5.10}
a_n W_n(z) = B(z)^n\, \f{u(z;\mu_n)}{u(z;\mu)}
\end{equation}
which by \eqref{5.7} implies \eqref{5.9}.
\end{proof}

The key asymptotic result of the next section is the following:

\begin{theorem}\lb{T5.3} Suppose $d\mu\in\Sz(\fre)$ and that for some subsequence $n_j\to\infty$ and all $m\in\bbZ$,
\begin{equation} \lb{5.11}
a_{n_j+m}(J_\mu) \to a_m^\sharp \qquad
b_{n_j+m}(J_\mu) \to b_m^\sharp
\end{equation}
for some point $\{a_n^\sharp, b_n^\sharp\}_{n=-\infty}^\infty$ in the isospectral torus. If $d\mu^\sharp$ is the
spectral measure for the Jacobi matrix with parameters $\{a_n^\sharp, b_n^\sharp\}_{n=1}^\infty$, then
\begin{equation} \lb{5.12}
u(z;\mu_{n_j}) \to u(z;\mu^\sharp)
\end{equation}
uniformly on compact subsets of $\bbD$.
\end{theorem}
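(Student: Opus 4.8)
The plan is to factor the Jost function \eqref{5.2} into its Blaschke part $\prod_k B(z,z_k)$ and its Szeg\H o part $\exp\bigl(\tfrac1{4\pi}\int \tfrac{e^{i\theta}+z}{e^{i\theta}-z}\log(v_\fre(\x(e^{i\theta}))/w(\x(e^{i\theta})))\,d\theta\bigr)$, and to show that each factor, formed from $\mu_{n_j}$, converges uniformly on compact subsets of $\bbD$ to the corresponding factor for $\mu^\sharp$. Since the right limit $\{a^\sharp_n,b^\sharp_n\}_{n\in\bbZ}$ lies in the isospectral torus, the half-line matrix $\{a^\sharp_n,b^\sharp_n\}_{n\ge1}$ has no spectrum outside $[\alpha_1,\beta_{\ell+1}]$ and at most one eigenvalue in each gap, so $u(\,\cdot\,;\mu^\sharp)$ has a finite Blaschke part. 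A preliminary step is a uniform bound: by Theorem~\ref{T3.7}(i) the quantities $\calE(J^{(n)})$ are bounded uniformly in $n$, hence (Theorem~\ref{T4.2}, Proposition~\ref{P4.3}, together with the Widom condition, valid for the Szeg\H o class) both $K_n$ and $\ca(\fre)^{-n}a_1\cdots a_n$ are bounded above and below, so $\sup_n Z(J^{(n)})<\infty$, and consequently $\sup_{j}\int_{\partial\bbD}\bigl|\log(v_\fre(\x(e^{i\theta}))/w(\x(e^{i\theta});J^{(n_j)}))\bigr|\,\tfrac{d\theta}{2\pi}<\infty$. This makes $\{u(\,\cdot\,;\mu_{n_j})\}$ a normal family on $\bbD$, so it suffices to identify every subsequential limit, and I work from here along an arbitrary subsequence of $(n_j)$.

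For the Blaschke part, split the zeros $z_k(J^{(n_j)})$ (preimages under $\x$ of the eigenvalues of $J^{(n_j)}$) into those landing in the bulk $\bigcup_j(\beta_j+\delta,\alpha_{j+1}-\delta)$ and those landing within $\delta$ of a band edge. For the bulk ones, Theorem~\ref{T3.9}, applied with the two-sided right limit equal to the isospectral-torus element, shows that for $n_j$ large $J^{(n_j)}$ has exactly as many bulk eigenvalues in each gap as $\mu^\sharp$ and that they converge to those of $\mu^\sharp$; hence the corresponding finite sub-products of $B(z,\,\cdot\,)$ converge uniformly on compacts. For the edge ones, Theorem~\ref{T3.7}(ii) bounds $\sum(\dist(x_k(J^{(n_j)}),\fre))^{1/2}$ over the edge eigenvalues by $\veps$ uniformly in $j$, and since $G_\fre(x)\le C\,\dist(x,\fre)^{1/2}$ and $|B(z,w)|=e^{-G_\fre(\x(w))}$ by \eqref{1.19x}, the Blaschke-product estimates from paper~I show that the corresponding sub-product is within $o(1)$ of $1$ on any prescribed compact; sending $\delta\downarrow0$ afterwards identifies the limit with the Blaschke part of $u(\,\cdot\,;\mu^\sharp)$.

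The Szeg\H o part is the crux. Writing $w_j(\theta):=w(\x(e^{i\theta});J^{(n_j)})$, one needs $\log(v_\fre(\x(e^{i\theta}))/w_j(\theta))\to\log(v_\fre(\x(e^{i\theta}))/v^\sharp(\x(e^{i\theta})))$ in $L^1(\partial\bbD,d\theta/2\pi)$, where $v^\sharp$ is the a.c.\ weight of $\mu^\sharp$; convergence of the Poisson-type integral for each fixed $z\in\bbD$ is then automatic since its kernel is bounded. Because the Jacobi parameters converge along $(n_j)$, the $m$-functions converge locally uniformly on $\bbC_+$ and hence $M_{\mu_{n_j}}\to M_{\mu^\sharp}$ on $\bbD$; but this interior convergence does \emph{not} by itself force convergence of the boundary densities, which is exactly where the difficulty lies. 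The uniform entropy bound above, combined with the uniform near-edge control of Theorem~\ref{T3.7}(ii), gives equi-integrability and weak $L^1$-precompactness of $\{\log(v_\fre(\x(\cdot))/w_{n_j}(\cdot))\}$; it remains to identify the weak limit with $\log(v_\fre(\x(\cdot))/v^\sharp(\x(\cdot)))$. I expect to do this by proving that the entropies converge, $Z(J^{(n_j)})\to Z(J^\sharp)$ --- the inequality $Z(J^\sharp)\le\liminf Z(J^{(n_j)})$ being lower semicontinuity of $Z$ under the weak convergence $\mu_{n_j}\to\mu^\sharp$ (as in the proof of Theorem~\ref{T4.5}), and the reverse being forced by the $C_0$ sum-rule bookkeeping of Theorem~\ref{T4.2} along the subsequence (relating $Z(J^{(n_j)})$ to $K_{n_j}$ and $\ca(\fre)^{-n_j}a_1\cdots a_{n_j}$, whose limits match those attached to $\mu^\sharp$ through the isospectral torus) --- since equality of entropies promotes the weak $L^1$ convergence of the log-weights to norm convergence by strict concavity/convexity. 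Ruling out this "loss of a.c.\ weight in the limit" is the main obstacle of the proof.

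Finally, multiplying the two factor limits yields $u(z;\mu_{n_j})\to u(z;\mu^\sharp)$ pointwise on $\bbD$; as every subsequence of $(n_j)$ has a further subsequence along which this holds and the family is normal, the convergence is uniform on compact subsets of $\bbD$. The normalization $u(0;\,\cdot\,)>0$ and the character-automorphy of $u$ pass to the limit, consistent with the right-hand side, which serves as a final check.
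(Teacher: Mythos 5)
Your overall architecture matches the paper's: factor $u=\beta\cdot\veps$ into Blaschke and Szeg\H{o} parts, handle the Blaschke part by combining Theorem~\ref{T3.9} (convergence of bulk eigenvalues) with the uniform near-edge control of Theorem~\ref{T3.7}(ii), and reduce the Szeg\H{o} part to convergence of the relative entropies $S(\rho_\fre\mid\mu_{n_j})\to S(\rho_\fre\mid\mu^\sharp)$ via the Simon--Zlato\v{s} lemma (note that only \emph{weak} convergence of $\log(f_{n_j})\,d\rho_\fre$ is needed and is all that lemma provides; your claim that equal entropies upgrade this to $L^1$-norm convergence is an over-claim, though a harmless one since the Poisson kernel is continuous). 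The Blaschke half and the easy entropy inequality $\limsup S(\rho_\fre\mid\mu_{n_j})\le S(\rho_\fre\mid\mu^\sharp)$ (upper semicontinuity) are correct and are exactly the paper's Theorem~\ref{T6.1} and \eqref{6.16}.

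The genuine gap is the reverse inequality $\liminf S(\rho_\fre\mid\mu_{n_j})\ge S(\rho_\fre\mid\mu^\sharp)$, which you correctly flag as the main obstacle but then dispose of with ``sum-rule bookkeeping \ldots whose limits match those attached to $\mu^\sharp$ through the isospectral torus.'' This does not work as stated. Theorem~\ref{T4.2} only relates $Z(J)$ to $Z(J^{(n)})$ for the \emph{strips of $J$ itself}; it tells you that $Z(J^{(n_j)})=Z(J)+\log A_{n_j}-\log K_{n_j}$ converges to \emph{some} limit $Z_\infty$ (since $A_{n_j}$ and $K_{n_j}$ converge along a further subsequence), but it gives no a priori identity tying $Z_\infty$ to $Z(J^\sharp)$, because $J^\sharp$ is only a right limit of $J$, not one of its strips --- there is no sum rule connecting $J$ to $J^\sharp$ to ``match'' against. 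Semicontinuity gives only $Z(J^\sharp)\le Z_\infty$, i.e.\ the direction you already have. The missing idea (the paper's proof of \eqref{6.17}) is to \emph{manufacture} a matrix to which the sum rule with tail exactly $J^\sharp$ applies: one forms the interpolating family $J_{k,\ell}$ that agrees with $J^{(n_k)}$ up to site $n_\ell-n_k$ and with $J^\sharp$ beyond, so that $(J_{k,\ell})^{(n_\ell-n_k)}=J^\sharp$ \emph{exactly} and the iterated $C_0$ sum rule \eqref{6.21} holds with $Z(J^\sharp)$ on the nose; one then takes $\ell\to\infty$ using upper semicontinuity of $S(\rho_\fre\mid\mu_{k,\ell})$ (plus the auxiliary convergence $\beta(0;\mu_{k,\ell})\to\beta(0;\mu_{n_k})$, which needs Theorems~\ref{T3.8} and \ref{T3.10}), and finally $k\to\infty$ along a subsequence chosen so that both $S(\rho_\fre\mid\mu_{n_k})$ and $\tau_k=a_1\cdots a_{n_k}/\ca(\fre)^{n_k}$ converge, so that $\tau_\infty/\tau_k\to1$. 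Without this double-limit construction the ``loss of a.c.\ weight'' scenario is not excluded, so your proof of the Szeg\H{o}-part convergence is incomplete at its central step.
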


We note, as will be explained in the next section, that there is no loss in supposing that
the limit $J^\sharp$ is
in the isospectral torus. We'll also show that Theorem~\ref{T5.3} allows the proof of \eqref{1.13} for
a point $\ti J$ in the isospectral torus.

%%%%%%%%%%%%%%%%%%%%%%%%%%%%%%%%%%%%%%%%%%%%%%%%%%%%%%%%%%%%%%
\section{Jost Asymptotics} \lb{s6}
%%%%%%%%%%%%%%%%%%%%%%%%%%%%%%%%%%%%%%%%%%%%%%%%%%%%%%%%%%%%%%

In this section, we'll prove Theorem~\ref{T5.3}, use this result to prove that for $d\mu\in\Sz(\fre)$,
the Jacobi parameters $a_n$, $b_n$ are asymptotic to a fixed element of $\calT_\fre$, and prove an
asymptotic formula for the Jost solution.

The key to our proof of the existence of an $\{\ti a_n,\ti b_n\}_{n=1}^\infty$ obeying \eqref{1.13} is the
Denisov--Rakhmanov--Remling theorem for $\fre$ (\cite{Remppt}) which implies that any right limit of $J$
lies in the isospectral torus. Tracking the characters of the Jost functions will determine exactly
which right limits occur. This leads to a proof quite different from the variational approach of \cite{Widom,Apt,PY}.

We write
\begin{equation} \lb{6.1}
u(z;\mu) = \beta(z;\mu) \veps (z;\mu)
\end{equation}
where $\beta$ is the Blaschke part and $\veps$ the Szeg\H{o} part. We'll prove \eqref{5.12} by proving separately
the convergence of the two parts.

\begin{theorem}\lb{T6.1} Under the hypotheses of Theorem~\ref{T5.3}, uniformly on compact subsets of $\bbD$,
\begin{equation} \lb{6.2}
\beta(z;\mu_{n_j}) \to \beta(z;\mu^\sharp)
\end{equation}
\end{theorem}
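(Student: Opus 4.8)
The plan is to work directly from the definition $\beta(z;\mu)=\prod_{k}B(z,z_k)$, the product of Blaschke factors over the points $z_k\in\calF$ with $\x(z_k)=x_k$ running through the eigenvalues of $J_\mu$ in $\bbR\setminus\fre$, and likewise $\beta(z;\mu_{n})=\prod_k B(z,z_k(J^{(n)}_\mu))$ for the stripped matrices. Fixing a compact $K\subset\bbD$ (with $\rho:=1-\sup_{z\in K}\abs z>0$), I would split the product for $J^{(n)}_\mu$ into a \emph{bulk} part, over eigenvalues lying a fixed distance $\delta$ away from every band edge $\alpha_j,\beta_j$, and an \emph{edge} part, over the remaining eigenvalues $E_n$. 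The point is that $J^\sharp$, being (by the hypothesis of Theorem~\ref{T5.3}) the half-line restriction of a two-sided element of $\calT_\fre$, has $\sigma(J^\sharp)\subset[\alpha_1,\beta_{\ell+1}]$ — restriction to a half-line cannot lower $\inf\sigma$ or raise $\sup\sigma$, and the two-sided matrix has spectrum $\fre$ — and, by Theorem~\ref{T3.9} (whose hypothesis \eqref{3.35}, with $\ti J=J^\sharp$, follows from \eqref{5.11}), at most one eigenvalue in each bounded gap; since these lie strictly inside the bounded gaps, as in paper~I, all the Blaschke mass of $\mu^\sharp$ is in the bulk once $\delta$ is small.

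Concretely, I would choose $\delta>0$ so small that (i) no eigenvalue of $J^\sharp$ is within $2\delta$ of any $\alpha_j$ or $\beta_j$, and (ii), using the Blaschke condition $\calE(J_\mu)<\infty$ and Theorem~\ref{T3.7}(ii) applied at each of the band edges, $\sup_n\sum_{k\in E_n}\dist\bigl(x_k(J^{(n)}_\mu),\fre\bigr)^{1/2}\le\eta$ for a prescribed small $\eta>0$. For the bulk part: by Theorem~\ref{T3.9} in each bounded gap, and by the monotonicity of $n\mapsto\inf\sigma(J^{(n)}_\mu)$ and $n\mapsto\sup\sigma(J^{(n)}_\mu)$ under stripping together with an approximate-eigenvector argument as in the proof of Theorem~\ref{T3.9} at the two unbounded components, for $n_j$ large the eigenvalues of $J^{(n_j)}_\mu$ at distance $\ge\delta$ from all band edges are finite in number, match those of $J^\sharp$ in number and position, and converge to them as $j\to\infty$. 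Since $\x^{-1}\!\restriction\calF$ is continuous away from $\fre$ and $B(z,w)$ is jointly continuous for $z\in K$ and $w$ in a compact subset of $\bbD$ (the product defining $B$ converges uniformly there, $\Gamma$ being of convergence type), these finite products converge uniformly on $K$, with limit $\beta(z;\mu^\sharp)$ by (i).

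For the edge part I would write $B(z,w)=\prod_{\gamma\in\Gamma}b_{\gamma(w)}(z)$ as a product of single Blaschke factors $b_{\gamma(w)}$ and use the elementary bound $\abs{b_w(z)-1}\le 2(1-\abs w)/(1-\abs z)$, which upon summing over the $\Gamma$-orbit of $z_k$ gives
\[
\sum_{\gamma\in\Gamma}\bigl\lvert b_{\gamma(z_k)}(z)-1\bigr\rvert\le\frac{2}{\rho}\sum_{\gamma\in\Gamma}\bigl(1-\abs{\gamma(z_k)}\bigr)\le\frac{2}{\rho}\,G_\fre(x_k)\le\frac{2C_1}{\rho}\,\dist(x_k,\fre)^{1/2},
\]
where the middle step uses \eqref{1.19x} (together with the symmetry of the Blaschke products) and the last uses \eqref{4.12x}. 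Feeding this into $\abs{\prod_k c_k-1}\le e^{\sum_k\abs{c_k-1}}-1$ and invoking (ii), a short computation yields $\sup_{z\in K}\bigl\lvert\prod_{k\in E_n}B(z,z_k)-1\bigr\rvert\le e^{C_K\eta}-1$ uniformly in $n$. Combining the bulk and edge estimates gives $\limsup_{j}\sup_{z\in K}\abs{\beta(z;\mu_{n_j})-\beta(z;\mu^\sharp)}\le e^{C_K\eta}-1$, and letting $\eta\downarrow0$ proves \eqref{6.2}.

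The main obstacle is the behavior at the edges of $\fre$: each $J^{(n_j)}_\mu$ may carry infinitely many eigenvalues accumulating there, and one must guarantee that their aggregate does not contribute a spurious factor in the limit. This is exactly what the uniform-in-$n$ eigenvalue-sum bound of Theorem~\ref{T3.7}(ii) — itself resting on the Lieb--Thirring-type Theorem~\ref{T3.5} — provides, once combined with the Green's-function estimate \eqref{4.12x} that converts $\dist(\cdot,\fre)^{1/2}$-smallness into closeness to $1$ of the corresponding Blaschke factors. A lesser technical point is that Theorem~\ref{T3.9} is stated only for bounded gaps, so the two unbounded ends of $\bbR\setminus\fre$ must be handled separately, which the monotonicity of $\inf\sigma$ and $\sup\sigma$ under coefficient stripping makes routine.
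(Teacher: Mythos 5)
Your proposal is correct and follows essentially the same route as the paper: split the Blaschke product into the eigenvalues within $\delta$ of the band edges, controlled uniformly in $n$ by Theorem~\ref{T3.7}(ii) together with the estimate $\sum_{\gamma\in\Gamma}(1-\abs{\gamma(z_k)})\le G_\fre(x_k)\le C_1\dist(x_k,\fre)^{1/2}$ (which is exactly what the paper outsources to Proposition~4.8 of paper~I and its proof), and the finitely many remaining eigenvalues, whose individual convergence is supplied by Theorem~\ref{T3.9}. The only difference is that you spell out the single-factor bound and the treatment of the unbounded components explicitly, which the paper leaves implicit.
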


\begin{proof} By Theorem~\ref{T3.7} of this paper and Proposition~4.8 of paper~I (and its proof), given a compact
set $K\subset\bbD$ and $\veps>0$, we can find $\delta>0$ so that the product of the contributions to $\beta$ from $x$'s
with $\dist(x,\fre) <\delta$ are within $\veps$ of $1$ for all $z\in K$. Thus, it suffices to prove convergence of
individual $x$'s for $\mu_{n_j}$ to those for $\mu^\sharp$, and this follows from Theorem~\ref{T3.9}.
\end{proof}

To control the Szeg\H{o} part, we first need the following lemma of Simon--Zlato\v{s} \cite{SZ}:

\begin{theorem}[\cite{SZ}] \lb{T6.2} Let $X$ be a compact Hausdorff measure space, $d\rho$,  $d\mu_n$, $d\mu_\infty$
probability measures with $d\mu_n \to d\mu_\infty$ weakly, and
\begin{equation} \lb{6.3}
d\mu_n = f_n\, d\rho + d\mu_{n;\s}
\end{equation}
Suppose that
\begin{equation} \lb{6.4}
S(\rho\mid\mu_n) \to S(\rho\mid\mu_\infty)
\end{equation}
with all relative entropies finite. Then
\begin{equation} \lb{6.5}
\log(f_n)\, d\rho \,\overset{w}{\longrightarrow}\, \log(f_\infty)\, d\rho
\end{equation}
\end{theorem}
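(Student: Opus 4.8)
The plan is to localize the relative-entropy functional: multiply every measure by a continuous weight $h$ and then apply the weak upper semicontinuity of $S$ with one argument held fixed.

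\emph{Setup.} Since all the relative entropies in the hypothesis are finite, $d\rho$ is $d\mu_n$- and $d\mu_\infty$-a.c., which forces $f_n>0$ and $f_\infty>0$ $\,d\rho$-a.e.\ and $d\rho/d\mu_n=1/f_n$ (and likewise for $\mu_\infty$). Hence $S(\rho\mid\mu_n)=\int\log(f_n)\,d\rho$ and $S(\rho\mid\mu_\infty)=\int\log(f_\infty)\,d\rho$, so \eqref{6.4} is precisely the statement that $\int\log(f_n)\,d\rho\to\int\log(f_\infty)\,d\rho$. Moreover $\log_+(f_n)\le f_n$ gives $\int\log_+(f_n)\,d\rho\le\mu_n(X)=1$, so together with finiteness of the full integral each $\log f_n$ and $\log f_\infty$ lies in $L^1(d\rho)$, and the signed measures in \eqref{6.5} make sense.

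\emph{Main step.} Fix continuous $h\colon X\to[0,1]$. The same bookkeeping applied to the finite positive measures $h\,d\rho$ and $h\,d\mu_n$ (here $h\,d\rho\ll h\,d\mu_n$ because $f_n>0$ $\,d\rho$-a.e.) gives
\begin{equation*}
S(h\rho\mid h\mu_n)=\int h\log(f_n)\,d\rho,\qquad S(h\rho\mid h\mu_\infty)=\int h\log(f_\infty)\,d\rho .
\end{equation*}
Because $h$ is continuous, $d\mu_n\to d\mu_\infty$ weakly implies $h\,d\mu_n\to h\,d\mu_\infty$ weakly (test against $\phi\mapsto\phi h$), while $h\,d\rho$ does not move; so the joint weak upper semicontinuity of relative entropy recalled in Section~\ref{s4}, applied with the first argument fixed (valid for finite positive measures; one may normalize to probability measures if one prefers), yields $\limsup_{n}\int h\log(f_n)\,d\rho\le\int h\log(f_\infty)\,d\rho$. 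Running this also for $1-h$ and using \eqref{6.4}, with $L:=\int\log(f_\infty)\,d\rho$,
\begin{equation*}
\liminf_{n\to\infty}\int h\log(f_n)\,d\rho = L-\limsup_{n\to\infty}\int(1-h)\log(f_n)\,d\rho\ \ge\ \int h\log(f_\infty)\,d\rho ,
\end{equation*}
so in fact $\int h\log(f_n)\,d\rho\to\int h\log(f_\infty)\,d\rho$ for every continuous $h$ with $0\le h\le1$.

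\emph{Conclusion.} An arbitrary $g\in C(X)$ is an affine combination $g=\alpha h+\beta$ with $0\le h\le1$ and $\alpha,\beta$ constants, so $\int g\log(f_n)\,d\rho=\alpha\int h\log(f_n)\,d\rho+\beta\int\log(f_n)\,d\rho\to\int g\log(f_\infty)\,d\rho$ by the previous step together with \eqref{6.4}; this is exactly \eqref{6.5}. The only point needing care is the identity $S(h\rho\mid h\mu_n)=\int h\log(f_n)\,d\rho$, i.e.\ checking that the singular parts $d\mu_{n;\s}$ and the sets $\{f_n=0\}$ and $\{h=0\}$ contribute nothing — which is where $f_n>0$ $\,d\rho$-a.e.\ is used — and invoking the semicontinuity of $S$ in a form valid for these possibly non-probability measures; the rest of the argument is routine.
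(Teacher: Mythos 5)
Your proof is correct and uses essentially the same mechanism as the paper: test against a continuous weight $h$, apply weak upper semicontinuity of $S$ to the weighted measures to get a one-sided bound, and then use \eqref{6.4} (applied to the complementary weight) to upgrade it to a two-sided limit. The only cosmetic differences are that you weight both arguments, $S(h\rho\mid h\mu_n)$, so that $h$ cancels inside the logarithm and you work with $h$ and $1-h$, whereas the paper weights only the first argument, $S(h\rho\mid\mu_n)$ with $h$ strictly positive, and takes $h=2\norm{g}_\infty\pm g$.
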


\begin{proof} If $h$ is continuous and strictly positive, by upper semicontinuity,
\begin{equation} \lb{6.6}
\limsup S(h\rho\mid\mu_n) \leq S(h\rho\mid\mu_\infty)
\end{equation}
or
\begin{equation} \lb{6.7}
\limsup\int \log(f_n h^{-1}) h\, d\rho \leq \int \log(f_\infty h^{-1})h\, d\rho
\end{equation}
so that
\begin{equation} \lb{6.8}
\limsup \int \log(f_n) h\, d\rho \leq \int \log(f_\infty) h\, d\rho
\end{equation}
For arbitrary continuous real-valued $g$, let $h=2\norm{g}_\infty \pm g$ to get
\begin{equation} \lb{6.9}
\lim\int\log(f_n)g\, d\rho = \int\log (f_\infty)g\, d\rho
\end{equation}
\end{proof}

\begin{proposition}\lb{P6.3} To get
\begin{equation} \lb{6.10}
\veps(z;\mu_{n_j})\to\veps (z;\mu^\sharp)
\end{equation}
uniformly for $z$ in compact subsets of $\bbD$, it suffices to prove that
\begin{equation} \lb{6.11}
\lim_{j\to\infty} S(\rho_\fre\mid\mu_{n_j}) = S(\rho_\fre\mid\mu^\sharp)
\end{equation}
\end{proposition}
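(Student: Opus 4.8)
The goal is to deduce the locally-uniform convergence of the Szeg\H{o} parts $\veps(z;\mu_{n_j})\to\veps(z;\mu^\sharp)$ from the convergence of relative entropies \eqref{6.11}. The plan is to run the argument entirely on $\partial\bbD$ via the covering map: recall that by \eqref{5.2}--\eqref{5.3}, $\veps(z;\mu_n)$ is the exponential of a Poisson-type integral of $\log(v_\fre(\x(e^{i\theta}))/w(\x(e^{i\theta});\mu_n))$. First I would note that it suffices to prove the weak-$*$ convergence of the boundary measures
\[
\log\!\biggl(\f{v_\fre(\x(e^{i\theta}))}{w(\x(e^{i\theta});\mu_{n_j}))}\biggr)\f{d\theta}{2\pi}
\,\overset{w}{\longrightarrow}\,
\log\!\biggl(\f{v_\fre(\x(e^{i\theta}))}{w(\x(e^{i\theta});\mu^\sharp))}\biggr)\f{d\theta}{2\pi},
\]
since the Poisson kernel $\frac{e^{i\theta}+z}{e^{i\theta}-z}$ is, for each fixed $z$ in a compact subset of $\bbD$, continuous in $\theta$, and uniformly so over such compacta — so weak-$*$ convergence of the measures gives locally uniform convergence of the integrals, and then of their exponentials. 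The reference density $v_\fre$ is fixed, so this reduces to showing $\log w(\x(e^{i\theta});\mu_{n_j}))\,\tfrac{d\theta}{2\pi}\to\log w(\x(e^{i\theta});\mu^\sharp))\,\tfrac{d\theta}{2\pi}$ weakly.

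The second step is to transfer this to the $x$-variable on $\fre$ using Corollary~4.6 of paper~I (the same change-of-variables already invoked in \eqref{4.7x}): because $\Ima M$ is automorphic, pushing the measure $\tfrac{d\theta}{2\pi}$ forward under $\x$ turns it into $d\rho_\fre$ on $\fre$, so weak convergence of $\log w(\x(e^{i\theta}))\tfrac{d\theta}{2\pi}$ on $\partial\bbD$ is equivalent to weak convergence of $\log(w(x;\mu_{n_j}))\,d\rho_\fre(x)\to\log(w(x;\mu^\sharp))\,d\rho_\fre(x)$ on $\fre$. Now I would apply Theorem~\ref{T6.2} with $X=\fre$ (or its convex hull), $d\rho=d\rho_\fre$, $d\mu_n=d\mu_{n_j}$ and $d\mu_\infty=d\mu^\sharp$: the hypothesis \eqref{6.3} is exactly the Lebesgue decomposition \eqref{4.1} with $f_n=w(\cdot;\mu_{n_j})\cdot(d x/d\rho_\fre)$, the weak convergence $d\mu_{n_j}\to d\mu^\sharp$ holds because the Jacobi parameters converge by \eqref{5.11} (so the operators converge strongly and the spectral measures converge weakly), all the relative entropies are finite since $d\mu\in\Sz(\fre)$ forces $Z(J)<\infty$ hence $Z(J^{(n)})<\infty$ by Theorem~\ref{T4.2}, and $d\mu^\sharp$ being in the isospectral torus has $Z=0$. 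The remaining hypothesis \eqref{6.4} is precisely \eqref{6.11}, which we are assuming. Theorem~\ref{T6.2} then yields $\log(f_{n_j})\,d\rho_\fre\to\log(f_\infty)\,d\rho_\fre$ weakly; since $\log(dx/d\rho_\fre)$ is a fixed bounded-away-from-$\pm\infty$ factor by \eqref{4.11}, this is the same as the desired weak convergence of $\log(w(\cdot;\mu_{n_j}))\,d\rho_\fre$.

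The main obstacle is not any single hard estimate but rather the bookkeeping needed to justify the interchange of limits: one must be careful that the $L^1(d\theta/2\pi)$ (equivalently $L^1(d\rho_\fre)$) bounds on $\log w$ are uniform in $n_j$ — which is where Theorem~\ref{T6.2}'s requirement of \emph{uniformly} finite relative entropies, combined with the one-sided bound $Z\ge 0$ and the step-by-step sum rule controlling $Z(J^{(n)})$ in terms of $Z(J)$ and bounded eigenvalue corrections, does the work — and that the Poisson integrals converge uniformly on compacta rather than merely pointwise; the latter is routine once one has the uniform total-mass bound and the weak convergence, since the family of Poisson kernels $\{\Real\frac{e^{i\theta}+z}{e^{i\theta}-z}\}_{z\in K}$ is equicontinuous on $\partial\bbD$. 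Once the locally uniform convergence of the real parts is in hand, the imaginary parts (hence the full $\veps$, which is a nonvanishing analytic function with $\veps(0)>0$) follow by the usual harmonic-conjugate argument, or more simply because $\veps(z;\mu_{n_j})$ is a locally bounded family of nonvanishing analytic functions whose logarithms converge locally uniformly along the chosen branch normalized by positivity near $z=0$.
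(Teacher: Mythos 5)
Your proposal is correct and follows essentially the same route as the paper: reduce to weak-$*$ convergence of the boundary log-densities on $\partial\bbD$, transfer to $\fre$ via Corollary~4.6 of paper~I, and invoke Theorem~\ref{T6.2} with hypothesis \eqref{6.4} supplied by \eqref{6.11}. The only slip is cosmetic: by \eqref{4.11} the factor $\log(dx/d\rho_\fre)$ is not bounded away from $-\infty$ at the band edges, but it is a fixed $L^1(d\rho_\fre)$ function independent of $n$, which is all your argument needs.
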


\begin{proof} By definition of $\veps$, it suffices that as measures on $\partial\bbD$,
\[
\log\Big(\f{1}{\pi}\abs{\Ima M_{\mu_{n_j}}(e^{i\theta})}\Big)\f{d\theta}{2\pi}
\,\overset{w}{\longrightarrow}\,
\log\Big(\f{1}{\pi}\abs{\Ima M_{\mu^\sharp}(e^{i\theta})}\Big)\f{d\theta}{2\pi}
\]
Given $g\in C(\partial\bbD)$, define
\begin{equation} \lb{6.12}
\ti g(e^{i\theta})= \f{\sum_{\gamma\in\Gamma}g(\gamma(e^{i\theta}))\abs{\gamma'(e^{i\theta})}}
{\sum_{\gamma\in\Gamma} \abs{\gamma'(e^{i\theta})}}
\end{equation}
and $h$ on $\fre$ by
\begin{equation} \lb{6.13}
h(\x(e^{i\theta})) = \tfrac12\, [\ti g(e^{i\theta})+\ti g(e^{-i\theta})]
\end{equation}
Note that $h$ is continuous on $\fre$ since $\ti g$ is continuous on $\partial\calF\cap\partial\bbD$ by
eqn.\ (3.4) of paper I.

By Corollary~4.6 of paper~I,
\begin{equation} \lb{6.14}
\int_0^{2\pi} g(e^{i\theta}) \log\biggl( \f{1}{\pi}\, \abs{\Ima M_\mu (e^{i\theta})}\biggr)\f{d\theta}{2\pi}
= \int_\fre h(x) \log(w_\mu(x))\, d\rho_\fre(x)
\end{equation}
so the necessary weak convergence on $\partial\bbD$ is implied by weak convergence of $\log(f_{n_j})\,
d\rho_\fre$ to $\log(f_\infty)\, d\rho_\fre$. This in turn follows from \eqref{6.11} and
Theorem~\ref{T6.2}.
\end{proof}

\begin{theorem}\lb{T6.4} Under the hypotheses of Theorem~\ref{T5.3}, uniformly on compact subsets of $\bbD$,
\begin{equation} \lb{6.15}
\veps(z;\mu_{n_j})\to \veps(z;\mu^\sharp)
\end{equation}
\end{theorem}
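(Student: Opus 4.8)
The plan is to invoke Proposition~\ref{P6.3}, which reduces \eqref{6.15} to the entropy convergence \eqref{6.11}, i.e. (writing $Z=-\tfrac12 S$ as usual) to $Z(J^{(n_j)})\to Z(J^\sharp)$. Since \eqref{5.11} forces $J^{(n_j)}\to J^\sharp$ strongly, the spectral measures $\mu_{n_j}\to\mu^\sharp$ weakly, and weak upper semicontinuity of $S$ (equivalently, lower semicontinuity of $Z$) gives the easy half, $\liminf_j Z(J^{(n_j)})\geq Z(J^\sharp)$, exactly as in the proof of Theorem~\ref{T4.5}. The content is therefore the matching upper bound $\limsup_j Z(J^{(n_j)})\leq Z(J^\sharp)$.

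To obtain it I would work through the step-by-step $C_0$ sum rule (Theorem~\ref{T4.2}): for every $n$,
\[
\log A_n(J)=\log K_n + Z(J^{(n)}) - Z(J),\qquad \log K_n=\sum_k\bigl[G_\fre(x_k(J))-G_\fre(x_k(J^{(n)}))\bigr].
\]
First, $\log K_{n_j}$ converges: $\sum_k G_\fre(x_k(J))$ is a fixed finite number, while Theorem~\ref{T3.7}(ii) controls the contribution of eigenvalues near the band edges uniformly in $n$, and Theorem~\ref{T3.9} shows the eigenvalues of $J^{(n_j)}$ in each gap converge to those of $J^\sharp$ (at most one per gap); hence $\sum_k G_\fre(x_k(J^{(n_j)}))\to\sum_k G_\fre(x_k(J^\sharp))$, and I write $K_\infty$ for the resulting limit of $K_{n_j}$. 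The easy half above then reads $\liminf_j\log A_{n_j}(J)\geq\log K_\infty + Z(J^\sharp)-Z(J)$, and it remains to prove the reverse bound on $\limsup_j\log A_{n_j}(J)$.

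For this I would splice. Let $\hat J_j$ be the Jacobi matrix whose parameters agree with those of $J$ on sites $1,\dots,n_j$ and with those of $J^\sharp$ afterward (this is $J_{n_j,0}$ of Theorem~\ref{T3.8} with $\ti J=J^\sharp$). Then $\hat J_j^{(n_j)}=J^\sharp$, so $A_{n_j}(\hat J_j)=A_{n_j}(J)$, and Theorem~\ref{T4.2} applied to $\hat J_j$ gives $\log A_{n_j}(J)=\log\hat K_{n_j} + Z(J^\sharp) - Z(\hat J_j)$ with $\log\hat K_{n_j}=\sum_k[G_\fre(x_k(\hat J_j))-G_\fre(x_k(J^\sharp))]$. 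Now $\hat J_j\to J$ entrywise, so $\mu_{\hat J_j}\to\mu_J$ weakly and lower semicontinuity of $Z$ gives $\liminf_j Z(\hat J_j)\geq Z(J)$; and by Theorems~\ref{T3.8} and~\ref{T3.10}, for $j$ large the eigenvalues of $\hat J_j$ in each gap (away from its edges) are in bijection with and converge to those of $J$, the contributions near the edges being uniformly small, so $\sum_k G_\fre(x_k(\hat J_j))\to\sum_k G_\fre(x_k(J))$ and hence $\hat K_{n_j}\to K_\infty$ — the same constant. Therefore $\limsup_j\log A_{n_j}(J)\leq\log K_\infty + Z(J^\sharp)-Z(J)$, which together with the previous paragraph pins $\log A_{n_j}(J)\to\log K_\infty + Z(J^\sharp)-Z(J)$; feeding this back into $Z(J^{(n_j)})=\log A_{n_j}(J)-\log K_{n_j}+Z(J)$ yields $Z(J^{(n_j)})\to Z(J^\sharp)$, hence \eqref{6.11}, hence \eqref{6.15} by Proposition~\ref{P6.3}.

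The step I expect to be most delicate is the eigenvalue bookkeeping for the spliced matrices $\hat J_j$: one must rule out that spurious eigenvalues produced by the rank-two coupling at site $n_j$ (or by the short $J^\sharp$-tail) survive in the limit and corrupt $\sum_k G_\fre(x_k(\hat J_j))$, and this is precisely what Theorems~\ref{T3.8} and~\ref{T3.10}, combined with the monotonicity of $G_\fre$ near gap edges, are designed to provide. The conceptual point is the splicing itself, which lets the single tool of lower semicontinuity of $Z$ control $Z(J^{(n_j)})$ from both sides — from $\mu_{n_j}\to\mu^\sharp$ on one side and from $\mu_{\hat J_j}\to\mu_J$ on the other — with the $C_0$ sum rule transferring the information between the head and the tail.
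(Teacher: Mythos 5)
Your proposal is correct, and its skeleton is the paper's: reduce to the entropy convergence \eqref{6.11} via Proposition~\ref{P6.3}, get one inequality from weak upper semicontinuity of $S$ (lower semicontinuity of $Z$), and get the other by splicing the right limit $J^\sharp$ onto a tail of $J$, applying the iterated $C_0$ sum rule \eqref{4.4x}, and controlling the Blaschke/eigenvalue part of $K_n$ with Theorems~\ref{T3.7}--\ref{T3.10} (your convergence of $\log K_{n_j}$ is exactly Theorem~\ref{T6.1} evaluated at $z=0$). The organization differs in a way worth noting. The paper introduces doubly indexed splices $J_{k,\ell}$ (the tail $J^{(n_k)}$ with $J^\sharp$ glued on at site $n_\ell-n_k$), first extracts a subsequence along which $\tau_j=a_1\cdots a_{n_j}/\ca(\fre)^{n_j}$ converges to a positive limit (using the Widom condition from Theorem~\ref{T4.1}), and then takes $\ell\to\infty$ followed by $k\to\infty$ in the resulting identity \eqref{6.21}. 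You instead use the single splice $\hat J_j$ of $J^\sharp$ onto $J$ itself at site $n_j$ and squeeze $\log A_{n_j}(J)$ between the two sum-rule expressions --- the one for $J$ (which contains $Z(J^{(n_j)})$) and the one for $\hat J_j$ (which contains $Z(\hat J_j)$ and $Z(J^\sharp)$) --- applying semicontinuity once on each side. This buys a small but real simplification: you never need the preliminary subsequence extraction for $\tau_j$ or the two-step iterated limit, and the quantity $Z(J^{(n_j)})$ is pinned down directly rather than through $\ul S$. The price is the same in both versions and you correctly identify it: the eigenvalue bookkeeping for the spliced matrices, i.e.\ that $\hat K_{n_j}$ and $K_{n_j}$ converge to the \emph{same} constant, which is exactly what Theorems~\ref{T3.8}, \ref{T3.9}, and \ref{T3.10}, together with the uniform edge bounds and $G_\fre(x)\le C_1\dist(x,\fre)^{1/2}$, deliver (with the understanding, as in the paper, that the same arguments handle eigenvalues outside $[\alpha_1,\beta_{\ell+1}]$ as well as those in the gaps).
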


\begin{proof} By Proposition~\ref{P6.3}, it suffices to prove \eqref{6.11}. Since $\mu_{n_j}
\overset{w}{\longrightarrow}\mu^\sharp$, upper semicontinuity of $S$ implies
\begin{equation} \lb{6.16}
\limsup S(\rho_\fre\mid\mu_{n_j}) \leq S(\rho_\fre\mid\mu^\sharp)
\end{equation}
So it suffices to prove that
\begin{equation} \lb{6.17}
\ul{S}\equiv\liminf  S(\rho_\fre\mid\mu_{n_j}) \geq S(\rho_\fre\mid\mu^\sharp)
\end{equation}

Pick a subsequence (that we'll still denote by $n_j$) so that $S(\rho_\fre\mid\mu_{n_j})\to\ul{S}$
%\liminf_{\text{original subsequence}} S(\rho_\fre\mid\mu_{n_j})$
and so that $\tau_j\to\tau_\infty$
for some $\tau_\infty>0$, where
\begin{equation} \lb{6.18}
\tau_j = \f{a_1 \cdots a_{n_j}}{\ca(\fre)^{n_j}}
\end{equation}
Note that by Theorem~\ref{T4.1} and $d\mu\in\Sz(\fre)$, the original $\tau_j$'s are bounded, so we can pick such
a convergent subsequence.

For $k<\ell$, let $J_{k,\ell}$ be the Jacobi matrix obtained by starting with $J^{(n_k)}$ and then putting $J^\sharp$
at sites beyond $n_\ell$, that is,
\begin{align}
a_m (J_{k,\ell}) &= \begin{cases}
a_{n_k+m} & 1\leq m \leq n_\ell -n_k \\
a_{m-n_\ell+n_k}^\sharp & m>n_\ell - n_k
\end{cases}  \lb{ 6.19} \\
b_m (J_{k,\ell}) &= \begin{cases}
b_{n_k+m} & 1 \leq m \leq n_\ell -n_k \\
b_{m-n_\ell+n_k}^\sharp & m > n_\ell - n_k
\end{cases} \lb{6.20}
\end{align}

Thus, $(J_{k,\ell})^{(n_\ell -n_k)} = J^\sharp$, so the iterated step-by-step $C_0$ sum rule says that
\begin{equation} \lb{6.21}
\f{\tau_\ell}{\tau_k} = \f{\beta(0;\mu^\sharp)}{\beta(0;\mu_{k,\ell})}\,
\exp \bigl[\tfrac12\, S(\rho_\fre\mid\mu_{k,\ell})-\tfrac12\, S(\rho_\fre\mid \mu^\sharp)\bigr]
\end{equation}
We claim that
\begin{equation} \lb{6.22}
\lim_{\ell\to\infty} \beta(0;\mu_{k,\ell}) = \beta(0;\mu_{n_k})
\end{equation}
Accepting this for now, we take $\ell\to\infty$ in \eqref{6.21}, using the upper semicontinuity of $S(\rho_\fre\mid\mu)$
in $\mu$ to get
\begin{equation} \lb{6.23}
\exp\bigl[\tfrac12\, S(\rho_\fre\mid\mu_{n_k})-\tfrac12\, S(\rho_\fre\mid\mu^\sharp)\bigr] \geq \f{\tau_\infty}{\tau_k}\,
\f{\beta(0;\mu_{n_k})}{\beta(0;\mu^\sharp)}
\end{equation}

Now take $k\to\infty$ using the assumption that $S(\rho_\fre\mid\mu_{n_k})\to\ul{S}$. Since $\tau_\infty/\tau_k\to 1$ and,
by \eqref{6.2},
\[
\frac{\beta(0;\mu_{n_k})}{\beta(0;\mu^\sharp)}\to 1
\]
we get \eqref{6.17}.

Thus, we need only prove \eqref{6.22}, which follows the proof of Theorem~\ref{T6.1}, but using
Theorems~\ref{T3.8} and \ref{T3.10}.
\end{proof}

\begin{proof}[Proof of Theorem~\ref{T5.3}] By \eqref{6.1}, this follows from Theorems~\ref{T6.1} and \ref{T6.4}.
\end{proof}

We can now prove \eqref{1.13}.

\begin{theorem}\lb{T6.5} Let $d\mu\in\Sz(\fre)$. Take $d\ti\mu$ to be the unique element in $\calT_\fre$ so that $u(z;\mu)$
and $u(z;\ti\mu)$ have the same automorphic character. Then
\begin{equation} \lb{6.24}
\lim_{n\to\infty}\, \abs{a_n-\ti a_n} + \abs{b_n-\ti b_n} =0
\end{equation}
\end{theorem}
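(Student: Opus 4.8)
The plan is to exploit the Denisov--Rakhmanov--Remling theorem for $\fre$ together with the compactness of the isospectral torus $\calT_\fre$ and the Jost-function asymptotics of Theorem~\ref{T5.3}. First I would observe that since $d\mu\in\Sz(\fre)$, the Jacobi parameters $\{a_n,b_n\}$ are bounded and, by the Widom condition \eqref{4.6}, bounded below; hence the sequence of right shifts $J^{(n)}$ has right limits, and by Remling's theorem \cite{Remppt} every right limit is a two-sided Jacobi matrix lying in the isospectral torus $\calT_\fre$. Thus $\{a_n,b_n\}$ approaches the (compact) torus in the sense that $\dist(\{a_{n+\cdot},b_{n+\cdot}\},\calT_\fre)\to0$; what remains is to show the approach is to a \emph{single} point $\ti J\in\calT_\fre$, namely the one whose Jost function shares the automorphic character of $u(z;\mu)$.

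The key step is a character-tracking argument. Suppose $J^{(n_j)}\to J^\sharp$ along some subsequence, with $J^\sharp$ in the isospectral torus (parameters $\{a_m^\sharp,b_m^\sharp\}$) and spectral measure $d\mu^\sharp$. By Theorem~\ref{T5.3}, $u(z;\mu_{n_j})\to u(z;\mu^\sharp)$ uniformly on compacts of $\bbD$. On the other hand, by Theorem~\ref{T5.2}, $u_{n_j}(z;\mu)=a_{n_j}^{-1}B(z)^{n_j}u(z;\mu_{n_j})$; since $u_n(z;\mu)=u(z;\mu)W_n(z)$ with $u(z;\mu)$ fixed and $W_n$ built from the Weyl $m$-functions of $J$, one can compare the automorphic character of $u(z;\mu_{n_j})$ with that of $u(z;\mu)$: the factor $B(z)^{n_j}$ contributes $n_j$ copies of the (fixed) character of $B$, and passing to the limit forces the character of $u(z;\mu^\sharp)$ to equal the character of $u(z;\mu)$ up to this $B$-contribution, which is precisely the normalization built into the definition of $\ti\mu$. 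But within the isospectral torus the map from a point of $\calT_\fre$ to the automorphic character of its Jost function is a bijection (this is part of the covering-map/torus picture from paper~I and is used implicitly in the statement of the theorem). Hence $d\mu^\sharp=d\ti\mu$, independent of the subsequence. Since every subsequential right limit equals $\ti J$ and the right limits lie in a compact set, the full sequence converges: $\abs{a_n-\ti a_n}+\abs{b_n-\ti b_n}\to0$.

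I expect the main obstacle to be making the character bookkeeping rigorous and uniform. Concretely: $u(z;\mu_{n_j})$ is a character-automorphic function on $\bbD$, and one must argue that its character converges (in the character group of $\Gamma$) to the character of $u(z;\mu^\sharp)$; combined with the relation $u(z;\mu_n)=a_n B(z)^{-n}u_n(z;\mu)=a_n B(z)^{-n}u(z;\mu)W_n(z)$ and the fact that $W_n$'s character is governed by the eigenvalues of $J^{(n)}$ (which, by the eigenvalue-limit theorems of Section~\ref{s3}, behave controllably), one extracts a congruence of characters modulo the character of $B$. The delicate point is that $n_j\to\infty$ and each step multiplies by the character of $B$, so one must work in the \emph{quotient} character group (characters modulo $\langle\chi_B\rangle$) and check that the limiting congruence pins down $d\mu^\sharp$ uniquely inside $\calT_\fre$; this uses that the isospectral torus is exactly the set of measures realizing each coset, a fact from paper~I. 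Once the character identity is established, uniqueness of the right limit and compactness of $\calT_\fre$ finish the proof with no further analysis.
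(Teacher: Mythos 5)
Your overall strategy (Remling's theorem to force right limits into $\calT_\fre$, Theorem~\ref{T5.3} to get convergence of Jost functions, and the bijection between points of $\calT_\fre$ and automorphic characters from paper~I) is the right toolkit, and it is the paper's toolkit. But the character bookkeeping as you describe it has a genuine gap, and the conclusion you draw from it is false as stated. The character of $u(z;\mu_n)$ is $\chi_J\chi_B^{-n}$, which \emph{varies} with $n$; in the generic (non-periodic) case the cyclic subgroup $\langle\chi_B\rangle$ is \emph{dense} in the character group $(\partial\bbD)^\ell$, so the quotient "characters modulo $\langle\chi_B\rangle$" is not Hausdorff and the coset $\chi_J\langle\chi_B\rangle$ has closure equal to the whole character group. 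Consequently your congruence does not pin down $d\mu^\sharp$ at all: a priori every point of $\calT_\fre$ has character in the closure of that coset. Relatedly, the assertion that "every subsequential right limit equals $\ti J$" cannot be correct: if $a_{n+m}\to\ti a_m$ for every fixed $m$ as $n\to\infty$, then $\ti a_1=\ti a_2=\cdots$, i.e., $\ti J$ would have constant coefficients, which fails for $\ell\geq 1$. What \eqref{6.24} actually says is that $J$ is \emph{asymptotic to the almost periodic sequence} $\{\ti a_n,\ti b_n\}$, not that $J^{(n)}$ converges; the right limits of $J$ must sweep out the closure of the orbit $\{\ti J^{(n)}\}$ in the torus.

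The missing idea is to track the characters of $J^{(n)}$ and $\ti J^{(n)}$ \emph{simultaneously along the same subsequence}. From \eqref{5.8} one has the exact identities $\chi_{J^{(n)}}=\chi_J\chi_B^{-n}$ and $\chi_{\ti J^{(n)}}=\chi_{\ti J}\chi_B^{-n}$, and since $\chi_{\ti J}=\chi_J$ by the definition of $\ti\mu$, one gets $\chi_{J^{(n)}}=\chi_{\ti J^{(n)}}$ for every $n$ --- an exact equality, with the troublesome $\chi_B^{-n}$ cancelling rather than being quotiented out. Arguing by contradiction, pass to a subsequence along which $J^{(n_j)}\to J^\sharp$ and $\ti J^{(n_j)}\to J^{(\infty)}$ with $J^\sharp\neq J^{(\infty)}$; apply Theorem~\ref{T5.3} to both (note you need it for $\ti J$ as well as for $J$), use that uniform convergence of character automorphic functions implies convergence of characters to conclude $\chi_{J^\sharp}=\chi_{J^{(\infty)}}$, and then invoke the bijectivity of the character map on $\calT_\fre$ (Theorem~7.3 of paper~I) to get $J^\sharp=J^{(\infty)}$, a contradiction. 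This yields $J^{(n)}-\ti J^{(n)}\to 0$ entrywise, which is \eqref{6.24}.
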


\begin{remark} The existence and uniqueness of $d\ti\mu\in\calT_\fre$ follows from
Theorem~7.3 of paper~I.
\end{remark}

\begin{proof} If not, by compactness, there is a right limit $J^\sharp$ so that
\begin{equation} \lb{6.25}
a_{m+n_j}\to a_m^\sharp \qquad b_{m+n_k} \to b_m^\sharp
\end{equation}
and so that
\begin{equation} \lb{6.26}
\ti a_{m+n_j} \to a_m^{(\infty)} \qquad \ti b_{m+n_j}\to b_m^{(\infty)}
\end{equation}
with
\begin{equation} \lb{6.27}
J^\sharp\neq J^{(\infty)}
\end{equation}

By the Denisov--Rakhmanov--Remling theorem \cite{Remppt}, $J^\sharp$ and $J^{(\infty)}$ lie in the isospectral torus.
Let $\chi_B(\gamma)$ be the automorphic character of $B(z)$. Then with $\chi_J(\gamma)$ the character of the Jost
function for $J$, \eqref{5.8} and the fact that $M^{(n-1)}$ is automorphic implies that
\begin{equation} \lb{6.28}
\chi_{J^{(n)}} = \chi_J \chi_B^{-n} \qquad \chi_{\ti J^{(n)}} = \chi_{\ti J} \chi_B^{-n}
\end{equation}
Since the definition of $\ti J$ is $\chi_{\ti J}=\chi_J$, we see that
\begin{equation} \lb{6.28x}
\chi_{J^{(n)}} =\chi_{\ti J^{(n)}}
\end{equation}

By Theorem~\ref{T5.3} and the fact that uniform convergence of character automorphic functions implies convergence of
their characters, we get
\begin{equation} \lb{6.29}
\chi_{J^\sharp} = \chi_{J^{(\infty)}}
\end{equation}
But $J^\sharp$ and $J^{(\infty)}$ lie in the isospectral torus, so by Theorem~7.3 of paper~I,
\begin{equation} \lb{6.30}
J^\sharp = J^{(\infty)}
\end{equation}
This contradiction to \eqref{6.27} implies that \eqref{6.24} holds.
\end{proof}

As a corollary, we get convergence of Jost solutions.

\begin{theorem}\lb{T6.6} Uniformly on compact subsets of $\bbD$,
\begin{equation} \lb{6.31}
\frac{u_n(z;\mu)-u_n (z;\ti\mu)}{B(z)^{n}} \to 0
\end{equation}
Moreover,
\begin{equation} \lb{6.32}
\f{u_n(z;\mu)}{u_n (z;\ti\mu)} \to 1
\end{equation}
uniformly on compact subsets of $\calF^\intt$.
\end{theorem}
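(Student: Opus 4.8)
The plan is to reduce both statements, via Theorem~\ref{T5.2}, to the single assertion that
\[
u(z;\mu_n)-u(z;\ti\mu_n)\to 0 \quad\text{uniformly on compact subsets of }\bbD,
\]
where $\mu_n$ and $\ti\mu_n$ denote the spectral measures of the $n$-times stripped matrices $J^{(n)}$ and $\ti J^{(n)}$ (both lie in $\Sz(\fre)$, and $\ti\mu_n\in\calT_\fre$ since stripping preserves the isospectral torus). Theorem~\ref{T5.2} gives $u_n(z;\mu)/B(z)^n=a_n^{-1}u(z;\mu_n)$ and $u_n(z;\ti\mu)/B(z)^n=\ti a_n^{-1}u(z;\ti\mu_n)$, whence
\[
\f{u_n(z;\mu)-u_n(z;\ti\mu)}{B(z)^n}=a_n^{-1}\bigl[u(z;\mu_n)-u(z;\ti\mu_n)\bigr]+(a_n^{-1}-\ti a_n^{-1})\,u(z;\ti\mu_n),
\]
while for \eqref{6.32} the factor $B(z)^n$ cancels, leaving $u_n(z;\mu)/u_n(z;\ti\mu)=(\ti a_n/a_n)\,u(z;\mu_n)/u(z;\ti\mu_n)$. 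Now $d\mu\in\Sz(\fre)$ obeys the Widom condition, so $a_n$ is bounded and bounded away from $0$; the $\ti a_n$ are too, by compactness of $\calT_\fre$; and $a_n-\ti a_n\to 0$ by Theorem~\ref{T6.5}, hence $a_n^{-1}-\ti a_n^{-1}\to 0$ and $\ti a_n/a_n\to 1$. Moreover $\{u(\,\cdot\,;\ti\mu_n)\}_n$ lies in $\{u(\,\cdot\,;\nu):\nu\in\calT_\fre\}$, which is locally bounded because $\nu\mapsto u(\,\cdot\,;\nu)$ is continuous on the compact set $\calT_\fre$ (paper~I). So, granting the displayed claim, \eqref{6.31} follows at once.

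To prove the claim, I use the subsequence principle: it suffices that every subsequence of $\{n\}$ admit a further subsequence $n_j$ along which $u(\,\cdot\,;\mu_{n_j})-u(\,\cdot\,;\ti\mu_{n_j})\to 0$ locally uniformly. Given a subsequence, boundedness of the Jacobi parameters and a diagonal argument let me extract $n_j$ with $a_{n_j+m}(J_\mu)\to a^\sharp_m$ and $b_{n_j+m}(J_\mu)\to b^\sharp_m$ for every $m\in\bbZ$. By the Denisov--Rakhmanov--Remling theorem \cite{Remppt}, the two-sided matrix $J^\sharp=\{a^\sharp_m,b^\sharp_m\}_{m\in\bbZ}$ lies in the isospectral torus. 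By Theorem~\ref{T6.5}, also $a_{n_j+m}(J_{\ti\mu})\to a^\sharp_m$ and $b_{n_j+m}(J_{\ti\mu})\to b^\sharp_m$, so $J_\mu$ and $J_{\ti\mu}$ have the \emph{same} right limit $J^\sharp$ along $n_j$. Applying Theorem~\ref{T5.3} once to $d\mu$ and once to $d\ti\mu$ gives $u(\,\cdot\,;\mu_{n_j})\to u(\,\cdot\,;\mu^\sharp)$ and $u(\,\cdot\,;\ti\mu_{n_j})\to u(\,\cdot\,;\mu^\sharp)$, uniformly on compact subsets of $\bbD$, with $d\mu^\sharp$ the spectral measure of $\{a^\sharp_n,b^\sharp_n\}_{n\geq 1}$. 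Subtracting proves the claim, hence \eqref{6.31}.

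For \eqref{6.32} I need, in addition, that $|u(z;\ti\mu_n)|$ is bounded below on compact subsets of $\calF^\intt$, uniformly in $n$. The key point is that a measure $d\nu\in\calT_\fre$ has no eigenvalues outside the convex hull $[\alpha_1,\beta_{\ell+1}]$: its half-line Jacobi matrix is the compression $P_+\hat J_\nu P_+\!\restriction\ran(P_+)$ of the two-sided matrix $\hat J_\nu$, whose spectrum is exactly $\fre$, so $\sigma(J_\nu)\subset[\alpha_1,\beta_{\ell+1}]$ by the min--max principle. Hence every eigenvalue of $J_\nu$ sits inside a gap, its $\x$-preimage lies on one of the orthocircle arcs bounding $\calF$ (Remark~1 after Theorem~\ref{T2.1}), and since $\gamma(w)\notin\calF^\intt$ for $w\in\calF$ and $\gamma\in\Gamma$, $\gamma\neq 1$, the Jost function $u(\,\cdot\,;\nu)$ has no zeros in $\calF^\intt$. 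Fix a compact $K\subset\calF^\intt$. Since $(\nu,z)\mapsto u(z;\nu)$ is continuous and $\calT_\fre$ is compact, $c_K:=\inf_{\nu\in\calT_\fre}\inf_{z\in K}|u(z;\nu)|>0$, so $|u(z;\ti\mu_n)|\geq c_K$ on $K$ for all $n$. Combining this with the claim and with $\ti a_n/a_n\to 1$,
\[
\f{u_n(z;\mu)}{u_n(z;\ti\mu)}=\f{\ti a_n}{a_n}\Bigl(1+\f{u(z;\mu_n)-u(z;\ti\mu_n)}{u(z;\ti\mu_n)}\Bigr)\longrightarrow 1
\]
uniformly on $K$, which is \eqref{6.32}.

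The one genuine obstacle is the locally uniform convergence $u(z;\mu_n)-u(z;\ti\mu_n)\to 0$; the rest is bookkeeping. Its delicate feature is that Theorem~\ref{T5.3} yields convergence of $u(\,\cdot\,;\mu_{n_j})$ only along subsequences generating a right limit in the isospectral torus, and that $u(\,\cdot\,;\mu_{n_j})$ and $u(\,\cdot\,;\ti\mu_{n_j})$ must be pinned to the \emph{same} limit --- which is precisely where Theorem~\ref{T6.5} is used; without it, the two Jost functions could in principle converge to different elements of $\calT_\fre$.
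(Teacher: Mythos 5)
Your proposal is correct and follows essentially the same route as the paper: Theorem~\ref{T6.5} forces $J^{(n)}$ and $\ti J^{(n)}$ to share right limits, Theorem~\ref{T5.3} (applied along subsequences, with Remling's theorem guaranteeing the limit lies in $\calT_\fre$) then gives $u(z;\mu_n)-u(z;\ti\mu_n)\to 0$ uniformly, and \eqref{5.9} together with $a_n/\ti a_n\to 1$ and the uniform lower bound on $\abs{u(z;\ti\mu_n)}$ over $\calF^\intt$ yields \eqref{6.31} and \eqref{6.32}. You have merely filled in details the paper leaves implicit (the subsequence extraction and the nonvanishing of Jost functions of torus elements on $\calF^\intt$), both correctly.
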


\begin{remark} At each point in $\{\gamma(0)\mid\gamma\in\Gamma\}$, $u_n$ and $B^n$ have zeros of order $n$,
so $u_n B^{-n}$ has removable singularities at those points.
\end{remark}

\begin{proof} Since $J^{(n)}$ and $\ti J^{(n)}$ (by Theorem~\ref{T6.5}) have the same right limits, by Theorem~\ref{T5.3},
\begin{equation} \lb{6.33}
\abs{u(z;\mu_n) - u(z;\ti \mu_n)}\to 0
\end{equation}
uniformly on $\bbD$. Since $a_n/\ti a_n\to 1$, \eqref{5.9} implies \eqref{6.31}.

As $u_n(z;\ti\mu)$ is bounded away from zero (uniformly in $n$) on compact subsets of $\calF^\intt$, \eqref{6.33}
implies \eqref{6.32}.
\end{proof}

\begin{corollary} \lb{C6.7} Let $d\mu\in\Sz(\fre)$ and let $d\ti\mu\in\calT_\fre$ be the measure for which
\eqref{6.24} holds. Then, as $n\to\infty$,
\begin{equation} \lb{6.34}
\f{a_1\cdots a_n}{\ti a_1 \cdots \ti a_n} \to \f{u(0;\ti\mu)}{u(0;\mu)}
\end{equation}
In particular, $a_1\cdots a_n/\ca(\fre)^n$ is asymptotically almost periodic.
\end{corollary}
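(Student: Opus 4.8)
The plan is to start from the step-by-step $C_0$ sum rule, iterated $n$ times, applied in parallel to $J$ (with measure $d\mu$) and to $\ti J$ (with measure $d\ti\mu\in\calT_\fre$), and then to divide the two resulting identities and pass to the limit. Concretely, the iterated version of Theorem~\ref{T5.1} (or equivalently \eqref{5.10} at $z=0$) gives
\begin{equation*}
a_1\cdots a_n \, W_n(0;\mu) = B(0)^n\, \f{u(0;\mu_n)}{u(0;\mu)},
\end{equation*}
and the same with $\mu$ replaced by $\ti\mu$. Since $B(0)=0$ this cannot be used literally at $z=0$; instead I would work with the Weyl solutions divided by $B(z)^n$, which have removable singularities at $0$ by the remark after Theorem~\ref{T6.6}, or more simply rewrite \eqref{5.9} as $u_n(z;\mu)/B(z)^n = a_n^{-1} u(z;\mu_n)$ and evaluate at $z=0$ after noting $u_n(z;\mu)/B(z)^n$ is analytic and nonvanishing near $0$. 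The cleanest route is: from \eqref{5.9},
\begin{equation*}
\f{a_1\cdots a_n}{\ti a_1\cdots\ti a_n}
= \f{u(0;\mu)\, u_n(0;\mu)^{-1}\, u(0;\mu_n)}{u(0;\ti\mu)\, u_n(0;\ti\mu)^{-1}\, u(0;\ti\mu_n)}\cdot(\text{correction from }B(z)^n),
\end{equation*}
but since the $B(z)^n$ factor is identical for $\mu$ and $\ti\mu$, it cancels in the ratio and I get
\begin{equation*}
\f{a_1\cdots a_n}{\ti a_1\cdots\ti a_n}
= \f{u(0;\mu)}{u(0;\ti\mu)}\cdot\f{a_n^{-1}u(0;\mu_n)}{\ti a_n^{-1}u(0;\ti\mu_n)}
= \f{\ti a_n}{a_n}\cdot\f{u(0;\mu)}{u(0;\ti\mu)}\cdot\f{u(0;\mu_n)}{u(0;\ti\mu_n)}.
\end{equation*}
Now $\ti a_n/a_n\to 1$ by Theorem~\ref{T6.5}, so it remains to show $u(0;\mu_n)/u(0;\ti\mu_n)\to 1$.

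The convergence $u(0;\mu_n)/u(0;\ti\mu_n)\to 1$ I would deduce from Theorem~\ref{T5.3} together with the compactness argument used in Theorem~\ref{T6.5}. Given any subsequence, pass to a further subsequence $n_j$ along which the Jacobi parameters of $J^{(n_j)}$ converge to a two-sided $J^\sharp$ in the isospectral torus (possible by the Denisov--Rakhmanov--Remling theorem \cite{Remppt}); by Theorem~\ref{T6.5}, $\ti J^{(n_j)}$ converges along the same subsequence to the same two-sided limit, since the argument there shows both limits lie in the isospectral torus and share the same automorphic character. Then Theorem~\ref{T5.3} gives $u(z;\mu_{n_j})\to u(z;\mu^\sharp)$ and $u(z;\ti\mu_{n_j})\to u(z;\mu^\sharp)$ uniformly on compacts of $\bbD$, so their ratio at $z=0$ tends to $1$. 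Since every subsequence has a further subsequence along which this holds, the full sequence $u(0;\mu_n)/u(0;\ti\mu_n)\to 1$. This yields \eqref{6.34}.

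For the final assertion, combine \eqref{6.34} with the fact, recalled from paper~I, that for the element $d\ti\mu$ of the isospectral torus, $\ti a_1\cdots\ti a_n/\ca(\fre)^n$ is almost periodic in $n$; since $a_1\cdots a_n/\ca(\fre)^n$ equals this almost periodic sequence times a factor converging to the nonzero constant $u(0;\ti\mu)/u(0;\mu)$, it is asymptotically almost periodic. (One should note $u(0;\mu)\neq 0$: this is where the Szeg\H{o} condition enters, guaranteeing the Szeg\H{o} part of $u$ is finite and nonzero at $0$, and $B(0,z_k)\neq 0$ for each eigenvalue since $z_k\neq 0$.) I expect the main obstacle to be the bookkeeping in the first paragraph—carefully justifying the removable-singularity cancellation of $B(z)^n$ and pinning down that the character of $u(\cdot;\ti\mu_n)$ matches that of $u(\cdot;\mu_n)$ for every $n$, which is exactly \eqref{6.28x}—rather than anything in the limiting argument, which is a routine subsequence-of-subsequence wrap-up of Theorems~\ref{T5.3} and \ref{T6.5}.
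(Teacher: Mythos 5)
Your second step (the subsequence argument from Theorems~\ref{T5.3} and \ref{T6.5} showing $u(0;\mu_n)/u(0;\ti\mu_n)\to 1$) and your handling of the final almost-periodicity claim are both sound and essentially what the paper does (the paper simply quotes the uniform convergence \eqref{6.33} together with the fact that $u(0;\nu)$ is bounded away from zero over $\calT_\fre$). The genuine gap is in your first step: the exact identity you write down is wrong, and it delivers the \emph{reciprocal} of the stated limit. Equation \eqref{5.9} (equivalently \eqref{5.10}) produces only the single factor $a_n$, not the product $a_1\cdots a_n$; it reads $a_nW_n(z)=B(z)^n\,u(z;\mu_n)/u(z;\mu)$. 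The cumulative product enters only through the $z\to 0$ behavior of the Weyl solution: since each $M^{(k)}(z)=z/x_\infty+O(z^2)$, \eqref{5.4} and \eqref{1.20} give $W_n(z)/B(z)^n\to (a_1\cdots a_{n-1})/\ca(\fre)^n$, and combining this with \eqref{5.10} (equivalently, iterating \eqref{5.8} and using \eqref{2.12}) yields the identity the paper records as \eqref{6.35},
\begin{equation*}
\f{u(0;\mu_n)}{u(0;\mu)}=\f{a_1\cdots a_n}{\ca(\fre)^n}\,.
\end{equation*}
Dividing by its analog for $\ti\mu$ gives \eqref{6.36},
\begin{equation*}
\f{a_1\cdots a_n}{\ti a_1\cdots\ti a_n}=\f{u(0;\ti\mu)}{u(0;\mu)}\cdot\f{u(0;\mu_n)}{u(0;\ti\mu_n)}\,,
\end{equation*}
with the constant factor being $u(0;\ti\mu)/u(0;\mu)$, not $u(0;\mu)/u(0;\ti\mu)$, and with no residual $\ti a_n/a_n$.

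Your formula $\f{a_1\cdots a_n}{\ti a_1\cdots\ti a_n}=\f{\ti a_n}{a_n}\cdot\f{u(0;\mu)}{u(0;\ti\mu)}\cdot\f{u(0;\mu_n)}{u(0;\ti\mu_n)}$ cannot be reconciled with this: equality of the two would force $\ti a_n/a_n=\bigl(u(0;\ti\mu)/u(0;\mu)\bigr)^2$ for every $n$, impossible since the left side tends to $1$ while the right side is a fixed constant generally different from $1$. The spurious $\ti a_n/a_n$ is harmless in the limit, but the inverted constant is not --- your argument concludes with $\lim a_1\cdots a_n/(\ti a_1\cdots\ti a_n)=u(0;\mu)/u(0;\ti\mu)$, the reciprocal of \eqref{6.34}. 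The missing ingredient is precisely the evaluation at $z=0$ of the $n$-times iterated step-by-step $C_0$ sum rule (i.e., \eqref{6.35}); once that is in place, the remainder of your proof, including the subsequence wrap-up and the appeal to the almost periodicity of $\ti a_1\cdots\ti a_n/\ca(\fre)^n$ from paper~I, goes through.
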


\begin{proof} The final sentence follows from \eqref{6.34} and Corollary~7.4 of paper~I. To obtain \eqref{6.34}, note that
\eqref{5.8} at $z=0$ and \eqref{2.12} implies
\begin{equation} \lb{6.35}
\f{u(0;\mu_n)}{u(0;\mu)} = \f{a_1\cdots a_n}{\ca(\fre)^n}
\end{equation}
Thus,
\begin{equation} \lb{6.36}
\f{a_1\cdots a_n}{\ti a_1 \cdots \ti a_n} = \f{u(0;\ti\mu)}{u(0;\mu)}\, \f{u(0;\mu_n)}{u(0;\ti\mu_n)}
\end{equation}

Since $u(0;\nu)$ is bounded away from $0$ as $d\nu$ runs through the isospectral torus, \eqref{6.33} implies that
\[
\f{u(0;\mu_n)}{u(0;\ti\mu_n)} \to 1
\]
proving \eqref{6.34}.
\end{proof}

%%%%%%%%%%%%%%%%%%%%%%%%%%%%%%%%%%%%%%%%%%%%%%%%%%%%%%%%%%%%%%
\section{Szeg\H{o} Asymptotics} \lb{s7}
%%%%%%%%%%%%%%%%%%%%%%%%%%%%%%%%%%%%%%%%%%%%%%%%%%%%%%%%%%%%%%

In Section~\ref{s6}, we proved that if $u_n$ is the Jost solution of a $J_\mu$ with $d\mu\in\Sz(\fre)$ and
$\ti u_n$ is the Jost solution for the element of the isospectral torus to which $J_\mu$ is asymptotic (in
the sense of \eqref{1.13}), then, as $n\to\infty$,  $u_n(z)/\ti u_n(z)\to 1$ uniformly on compact subsets of
$\calF^\intt$. Our goal in this section is to prove that if $p_n$ and $\ti p_n$ are the
corresponding orthonormal polynomials, then also on $\calF^\intt$, $p_n(z)/\ti p_n(z)$ has a limit
(which will not be identically $1$ and which we'll write explicitly in terms of Jost functions).

The passage from Jost asymptotics to Szeg\H{o} asymptotics in the case $\fre=[-2,2]$ was studied by Damanik--Simon
\cite{Jost1} using constancy of the Wronskian. Our first approach for general $\fre$ mimicked that of \cite{Jost1}
but was awkward because certain objects which were constant in the case $\fre =[-2,2]$ were instead almost periodic.
To overcome this, we found a new approach which, even for $\fre=[-2,2]$, is somewhat simpler than the approach
in \cite{Jost1}.

The idea is to exploit the formula for the diagonal Green's function for $x\in\bbC_+$,
\begin{equation}\lb{7.1}
G_{nn}(x) =\jap{\delta_n, (J-x)^{-1}\delta_n}
\end{equation}
namely (see, e.g., \cite{Rice}),
\begin{equation}\lb{7.2}
G_{nn}(x)=\f{p_{n-1}(x) U_n(x)}{\Wr(x)}
\end{equation}
where $U_n(x)$ is defined by
\begin{equation}\lb{7.3}
U_n(x)=u_n(\zeta) \qquad \x(\zeta)=x \qquad \zeta\in\calF^\intt
\end{equation}
and $\Wr(x)$ is defined by
\begin{equation}\lb{7.4}
\Wr(x)=a_m \bigl(U_{m+1}(x) p_{m-1}(x)-U_m(x) p_m(x)\bigr)
\end{equation}
for $m\geq 1$. The right-hand side is independent of $m$. The funny indices in \eqref{7.4} compared to Wronskians come
from the fact that $U_m$ and $V_m=p_{m-1}$ obey the same difference equation, and RHS of \eqref{7.4} is nothing but
$a_m (U_{m+1}V_m-U_mV_{m+1})$.

In \eqref{7.4}, we can also take $m=0$ if we set $a_0=1$, $p_{-1}(x)=0$, and
\begin{equation}\lb{7.5x}
U_0(x)=u(\zeta;\mu)
\end{equation}
%To see that \eqref{7.4} also holds for $m=0$, we note that
With this choice of $p_{-1}$, $U_0$, and $a_0$,
$U_m$ obeys $a_0 U_0 + b_1 U_1 + a_1 U_2=xU_1$, and similarly for $V_m$. Since $p_{-1}=0$ and $p_0=1$,
\eqref{7.4} for $m=0$ says
\begin{equation}\lb{7.5}
\Wr(x)=-u(\zeta;\mu)
\end{equation}
Here is the key to going from Jost to Szeg\H{o} asymptotics:

\begin{theorem}\lb{T7.1} Suppose $\{a_n,b_n\}_{n=1}^\infty$ obey \eqref{1.13} for some $\{\ti a_n,\ti b_n\}_{n=1}^\infty$ in $\calT_\fre$. Then, uniformly for
$z$ in compact subsets of $\bbC\setminus ([\alpha_1, \beta_{\ell+1}]\cup\sigma(J))$,
\begin{equation}\lb{7.6}
\lim_{n\to\infty} [G_{nn}(z)-\wti G_{nn}(z)]=0
\end{equation}
where $\wti G_{nn}$ is given by \eqref{7.1} with $J$ replaced by $\ti J$.
\end{theorem}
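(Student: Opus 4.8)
The plan is to prove Theorem~\ref{T7.1} by a soft normal‑families argument that uses only \eqref{1.13} together with the elementary fact that $G_{nn}$ and $\wti G_{nn}$ are uniformly bounded resolvent matrix elements on the region in question. In particular the Green's function formula \eqref{7.2} is not used here; it enters only afterwards, when \eqref{7.6} is converted into the claimed ratio asymptotics for $p_n/\ti p_n$.

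First I would fix $\Omega=\bbC\setminus([\alpha_1,\beta_{\ell+1}]\cup\sigma(J))$ and record two facts about it. Since $\sigma_\ess(J)=\fre\subset[\alpha_1,\beta_{\ell+1}]$, the part of $\sigma(J)$ lying outside $[\alpha_1,\beta_{\ell+1}]$ is a closed discrete subset of $\bbC\setminus[\alpha_1,\beta_{\ell+1}]$ (its only possible accumulation points are $\alpha_1,\beta_{\ell+1}$), so $\Omega$ is open and connected. Moreover, because $\alpha_1,\beta_{\ell+1}\in\sigma(J)$ we have $[\alpha_1,\beta_{\ell+1}]\subset[-\norm{J},\norm{J}]$, hence $\{z:\abs{z}>\norm{J}\}\subset\Omega$; and since $\ti J\in\calT_\fre$ has $\sigma(\ti J)\subset[\alpha_1,\beta_{\ell+1}]$ (this is exactly why the statement removes the full convex hull and not just $\fre$), we get $\sigma(\ti J)\cap\Omega=\emptyset$ and $\norm{\ti J}\le\norm{J}$.

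The proof then has two ingredients. \emph{Uniform bounds.} For every $n$, $G_{nn}$ is analytic on $\bbC\setminus\sigma(J)\supset\Omega$ with $\abs{G_{nn}(z)}\le\dist(z,\sigma(J))^{-1}$, and $\wti G_{nn}$ is analytic on $\bbC\setminus\sigma(\ti J)\supset\Omega$ with $\abs{\wti G_{nn}(z)}\le\dist(z,\sigma(\ti J))^{-1}$; both right‑hand sides are bounded on compact subsets of $\Omega$ uniformly in $n$, so $\{G_{nn}-\wti G_{nn}\}_n$ is a normal family on $\Omega$ by Montel's theorem. \emph{Convergence near infinity.} For $\abs{z}>\norm{J}$ expand $G_{nn}(z)=-\sum_{m\ge0}z^{-m-1}(J^m)_{nn}$ and likewise for $\wti G_{nn}$. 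For each fixed $m$ and $n>m$, $(J^m)_{nn}$ is one fixed polynomial in $a_{n-m},\dots,a_{n+m-1},b_{n-m+1},\dots,b_{n+m}$, and $(\ti J^m)_{nn}$ is that same polynomial in the tilded parameters; since the Jacobi parameters are bounded and \eqref{1.13} holds, $(J^m)_{nn}-(\ti J^m)_{nn}\to0$ as $n\to\infty$. As $\abs{(J^m)_{nn}}\le\norm{J}^m$ and $\abs{(\ti J^m)_{nn}}\le\norm{\ti J}^m\le\norm{J}^m$, dominated convergence in the series gives $G_{nn}(z)-\wti G_{nn}(z)\to0$ for every $z$ with $\abs{z}>\norm{J}$.

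Finally I would combine the two: any locally uniform limit along a subsequence of $\{G_{nn}-\wti G_{nn}\}$ is analytic on $\Omega$ and vanishes on the open set $\{\abs{z}>\norm{J}\}$, hence vanishes identically on the connected set $\Omega$; since every subsequence has such a further subsequence, $G_{nn}-\wti G_{nn}\to0$ locally uniformly on $\Omega$, which is \eqref{7.6}. I do not expect a serious obstacle along this route: the only points requiring care are the topology of $\Omega$ (connected, containing a neighbourhood of $\infty$, and disjoint from $\sigma(\ti J)$) and the fact that \eqref{1.13} is needed only at each fixed order $m$, which is precisely what legitimizes the termwise passage to the limit in the Neumann series. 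Note that this argument bypasses the Jost‑function machinery of Section~\ref{s6} entirely and, unlike the Wronskian approach of \cite{Jost1}, never has to track almost‑periodic quantities.
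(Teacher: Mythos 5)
Your argument is correct, and it reaches \eqref{7.6} by a genuinely different route than the paper. The paper's proof starts from the second resolvent identity, $G_{nn}(z)-\wti G_{nn}(z)=\sum_{m,k}G_{nm}(z)(\ti J-J)_{mk}\wti G_{kn}(z)$, and combines it with a Combes--Thomas type exponential off-diagonal decay bound $\abs{G_{kn}(z)}+\abs{\wti G_{kn}(z)}\le Ce^{-D\abs{k-n}}$ valid on compact subsets of $\bbC_+$ (stated there without proof); the condition \eqref{1.13} enters through $(\ti J-J)_{mk}\to0$, and the result is then pushed from $\bbC_\pm$ to the real points of the domain by the maximum principle. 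You instead establish convergence only on the neighbourhood of infinity $\{\abs{z}>\norm{J}\}$ via the Neumann series, where \eqref{1.13} enters through the termwise limits $(J^m)_{nn}-(\ti J^m)_{nn}\to0$ for each fixed $m$, and then propagate to all of $\Omega$ by Montel plus the identity theorem (a Vitali-type sub-subsequence argument). The two proofs share the same ``soft'' final step --- uniform boundedness of the resolvent elements on compacts plus analyticity --- but your version trades the unproved exponential decay estimate for an entirely elementary computation, which is a genuine simplification; the paper's version, on the other hand, localizes the mechanism (only the entries of $\ti J-J$ near index $n$ matter, weighted by exponentially decaying Green's functions) and so would survive in settings where one wants quantitative rates on $\bbC_\pm$. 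Your preliminary observations are all sound and worth keeping explicit: $\Omega$ is connected because one removes only a compact subset of $\bbR$; the half-line restriction of $\ti J\in\calT_\fre$ is a compression of the two-sided operator, so its spectrum (and hence its norm) is controlled by $[\alpha_1,\beta_{\ell+1}]$, which is exactly why $\wti G_{nn}$ is analytic on $\Omega$ and why the convex hull, not just $\fre$, must be excised.
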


\begin{proof} By the resolvent formula,
\begin{equation}\lb{7.7}
G_{nn}(z)-\wti G_{nn}(z) =\sum_{m,k} G_{nm}(z) (\ti J-J)_{mk}\, \wti G_{kn}(z)
\end{equation}
On compact subsets of $\bbC_+$,
\begin{equation}\lb{7.8}
\abs{G_{kn}(z)}+\abs{\wti G_{kn}(z)} \leq Ce^{-D\abs{k-n}}
\end{equation}
for suitable $C, D>0$. Since $(\ti J-J)_{mk}\to 0$ as $m,k\to\infty$, we get \eqref{7.6} from \eqref{7.7}
and \eqref{7.8}. Using the maximum principle, one extends the result to compact subsets of $\bbC\setminus
([\alpha_1, \beta_{\ell+1}]\cup\sigma(J))$.
\end{proof}

\begin{theorem}\lb{T7.2} Under the hypotheses of Theorem~\ref{T7.1}, uniformly on the same compact subsets
of $\bbC$, we have that
\begin{equation}\lb{7.7x}
\lim_{n\to\infty} \f{G_{nn}(z)}{\wti G_{nn}(z)} =1
\end{equation}
\end{theorem}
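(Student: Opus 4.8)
The plan is to deduce Theorem~\ref{T7.2} from Theorem~\ref{T7.1} by showing that the difference $G_{nn}(z)-\wti G_{nn}(z)$, which tends to $0$, is small \emph{relative} to $\wti G_{nn}(z)$; in other words, that $\wti G_{nn}(z)$ does not itself decay. First I would record the formula \eqref{7.2}, both for $J$ and for $\ti J$, so that
\[
\f{G_{nn}(z)}{\wti G_{nn}(z)} = \f{p_{n-1}(z)\, U_n(z)}{\wti p_{n-1}(z)\, \wti U_n(z)} \cdot \f{\wti{\Wr}(z)}{\Wr(z)}.
\]
Since $J$ and $\ti J$ satisfy \eqref{1.13}, the Jost functions obey $u(z;\mu)/u(z;\ti\mu)$ is a fixed nonzero constant on $\calF^\intt$ (it is the ratio of the $z=0$ values times characters, but more simply it is bounded away from $0$ and $\infty$), so by \eqref{7.5} the ratio $\wti{\Wr}(z)/\Wr(z)$ is bounded above and below uniformly on the relevant compacta. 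Thus it suffices to control $\wti G_{nn}(z)$ from below.

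The key point is that $\ti J$ lies in the isospectral torus $\calT_\fre$, so its diagonal Green's function is \emph{explicitly known and almost periodic in $n$}. Concretely, from paper~I one has a formula of the form $\wti G_{nn}(x) = -\wti p_{n-1}(x)\,\wti u_n(x)/\wti u(x;\ti\mu)$ with $\wti u_n(z) = \ti a_n^{-1} B(z)^n u(z;\ti\mu_n)$ by \eqref{5.9}, where $\ti\mu_n$ runs through the (compact) isospectral torus. Hence
\[
\abs{\wti G_{nn}(z)} = \abs{B(z)}^{2n} \cdot \abs{\f{\text{(bounded, bounded-away-from-}0\text{ factors)}}{1}},
\]
so on a compact subset of $\bbC\setminus([\alpha_1,\beta_{\ell+1}]\cup\sigma(J))$, where $\abs{B(z)} = e^{-G_\fre(\x(z))}$ is bounded strictly between $0$ and $1$, we get a two-sided bound $c_1\abs{B(z)}^{2n} \leq \abs{\wti G_{nn}(z)} \leq c_2\abs{B(z)}^{2n}$ with $c_1,c_2>0$ independent of $n$ (using the compactness of $\calT_\fre$ to make the factors uniform, just as $u(0;\nu)$ was controlled uniformly over the torus in the proof of Corollary~\ref{C6.7}). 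In particular $\wti G_{nn}(z)$ never vanishes on these compacta and decays at a controlled geometric rate.

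Combining, on a fixed compact set $K \subset \bbC\setminus([\alpha_1,\beta_{\ell+1}]\cup\sigma(J))$,
\[
\abs{\f{G_{nn}(z)}{\wti G_{nn}(z)} - 1} = \f{\abs{G_{nn}(z)-\wti G_{nn}(z)}}{\abs{\wti G_{nn}(z)}} \leq \f{o(1)}{c_1 \abs{B(z)}^{2n}},
\]
and here the difficulty is that the numerator is $o(1)$ but the denominator also goes to $0$, so this naive bound is not enough. To fix this I would run the resolvent estimate \eqref{7.7}--\eqref{7.8} more carefully: the true bound there is that $\abs{G_{nn}(z)-\wti G_{nn}(z)} \leq C \sum_{m,k} e^{-D\abs{n-m}}\,\epsilon_{\max(m,k)}\, e^{-D\abs{k-n}}$ where $\epsilon_j = \abs{a_j-\ti a_j}+\abs{b_j-\ti b_j}\to 0$, which is in fact $O(\epsilon_n)$ up to a geometric tail, i.e.\ comparable to $e^{-2D'\abs{?}}$ — more to the point, the exponential weights in \eqref{7.8} are precisely of the form $\abs{B(z)}^{2\abs{n-m}}$ coming from the same Green's-function decay, so the difference inherits the factor $\abs{B(z)}^{2n}$. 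Writing $G_{nn} = \wti G_{nn}(1 + \eta_n)$, the resolvent identity gives $\eta_n = \sum_{m,k}(G_{nm}/\wti G_{nn})(\ti J-J)_{mk}\wti G_{kn}$, and each ratio $G_{nm}/\wti G_{nn}$ is bounded uniformly in $n$ on $K$ (again by the isospectral-torus structure plus \eqref{1.13}), while $\sum_{m,k}\abs{(\ti J-J)_{mk}}\abs{\wti G_{kn}/\wti G_{nn}} \to 0$ because $(\ti J-J)_{mk}\to0$ and the normalized Green's function ratios have a uniform geometric tail. This yields $\eta_n\to0$ uniformly on $K$, which is exactly \eqref{7.7x}. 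The main obstacle, then, is precisely this bookkeeping: showing that \emph{both} the error in \eqref{7.6} and $\wti G_{nn}$ itself carry the same $\abs{B(z)}^{2n}$ weight so that their ratio is genuinely $o(1)$; this is where one must use the explicit almost-periodic structure of the isospectral-torus Green's function from paper~I rather than just the qualitative statement \eqref{7.6}.
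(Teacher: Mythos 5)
Your overall strategy---deduce \eqref{7.7x} from \eqref{7.6} by showing the denominator $\wti G_{nn}$ stays away from zero---is exactly the paper's, but you go wrong at the crucial quantitative step, and the error sends you into an unnecessary and ultimately unjustified detour.

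The mistake is the claim that $\abs{\wti G_{nn}(z)}\asymp\abs{B}^{2n}$ and hence tends to zero. You obtain the factor $\abs{B}^{2n}$ by combining the decay $\ti U_n\asymp B^n$ from \eqref{5.9} with a reading of \eqref{7.10a} under which $\ti p_{n-1}$ is also comparable to $B^n$. But orthonormal polynomials \emph{grow} like $\abs{B}^{-n}=e^{nG_\fre(x)}$ off $[\alpha_1,\beta_{\ell+1}]$; the exponent displayed in \eqref{7.10a} is a typo for $-n$, as the proof of Theorem~\ref{T7.3} makes clear (it multiplies $p_{n-1}$ by $B^{n}$ to obtain something almost periodic and bounded away from $0$). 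Consequently $\ti p_{n-1}\ti U_n$ is bounded above \emph{and below}, and $\wti G_{nn}=\ti p_{n-1}\ti U_n/\wti\Wr$ is uniformly bounded away from $0$ and $\infty$ on the compacta, uniformly in $n$ (the constants depend only on the compact set and on the compact torus $\calT_\fre$, since shifting $n$ just moves $\ti\mu$ within the torus). This is also what one expects directly: $\wti G_{nn}(z)=\jap{\delta_n,(\ti J-z)^{-1}\delta_n}$ approaches, along right limits, a nonvanishing whole-line diagonal Green's function; it does not decay geometrically in $n$. With the correct uniform lower bound in hand, \eqref{7.6} immediately gives \eqref{7.7x}---which is the paper's one-line proof.

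The ``fix'' you then attempt---arguing that $G_{nn}-\wti G_{nn}$ also carries a factor $\abs{B}^{2n}$ so that the ratio is still $o(1)$---does not work and is not needed. In the resolvent sum \eqref{7.7} the term $m=k=n$ contributes $G_{nn}(z)(\ti J-J)_{nn}\wti G_{nn}(z)$, whose size is of order $\abs{b_{n+1}-\ti b_{n+1}}$ with no additional geometric factor; the decay in \eqref{7.8} is in $\abs{k-n}$, not in $n$. So the difference in \eqref{7.6} is genuinely only $o(1)$, and the theorem is rescued not by extra decay of the numerator but by the non-decay of the denominator that your estimate discarded.
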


\begin{proof} For each fixed $n$, $\wti G_{nn}(z)$ is nonvanishing on the compact subsets under discussion since
neither $\ti u_n$ nor $\ti p_{n-1}$ have zeros there. Since shifting $n$ is equivalent to moving on
the torus, $\wti G_{nn}$ is uniformly bounded away from zero as $n$ varies (cf.\ \eqref{7.10a} below). Therefore,
\eqref{7.6} implies \eqref{7.7x}.
\end{proof}

As a final preliminary on Szeg\H{o} asymptotics, we look at the isospectral torus. If $d\nu\in\calT_\fre$, then
reflection of the Jacobi parameters about $n=0$,
\begin{equation}\lb{7.9}
b_n^{(r)}=b_{-n}, \quad a_n^{(r)}=a_{1-n}, \quad n\in\bbZ
\end{equation}
gives an almost periodic Jacobi matrix in the isospectral torus, so a point we will call $d\nu^{(r)}\in\calT_\fre$.

For $n\in\bbZ$, we denote by $d\nu_n\in \calT_\fre$ the spectral measure of the two-sided Jacobi matrix $\ti J_\nu$
when restricted to $\ell^2(\{n+1,n+2,\dots\})$. In particular, $d\nu_0=d\nu$.

Following paper I, for $x\in\bbC\cup\{\infty\}\setminus\fre$, we define $\z(x)\in{\calF}$ to be the unique point with
$\x(\z(x))=x$, and for $x\in\fre$, we set $\z(x)=\z(x-i0)$.

\begin{theorem}\lb{T7.3} Given $d\nu \in \calT_\fre$, there exist nonvanishing, continuous functions $\alpha(x;\nu)$
and $\beta(x;\nu)$ for $x\in\bbC\setminus [\alpha_1, \beta_{\ell+1}]$ so that the orthonormal polynomials are given by
\begin{equation}\lb{7.10}
p_{n-1}(x;\nu) = \alpha(x;\nu) \f{u(\z(x),\nu^{(r)}_{-n})}{a^{(r)}_{-n} B(\z(x))^{n}}
+
\beta(x;\nu) \f{u(\z(x),\nu_n)}{a_n B(\z(x))^{-n}}
\end{equation}
In particular, $p_{n-1}(x;\nu) B(\z(x))^n$ is asymptotically almost periodic. Moreover, on any compact subset, $K$,
of $\bbC\setminus [\alpha_1, \beta_{\ell+1}]$, there is a constant $C>1$ so that
\begin{equation}\lb{7.10a}
C^{-1} B(\z(x))^n \leq \abs{p_{n-1}(x;\nu)} \leq C B(\z(x))^n
\end{equation}
for all $x\in K$ and $d\nu\in\calT_\fre$.
\end{theorem}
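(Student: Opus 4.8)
The plan is to regard $p_{n-1}(\,\cdot\,;\nu)$ as the particular solution of the three-term recursion of the two-sided matrix $\ti J_\nu$ singled out by $p_{-1}=0$, $p_0=1$, and to expand it in the basis of Jost solutions --- one decaying at $+\infty$, the other at $-\infty$ --- so that $\alpha$ and $\beta$ are simply the expansion coefficients. Fix $x\in\bbC\setminus[\alpha_1,\beta_{\ell+1}]$ and put $\zeta=\z(x)$, which depends analytically on $x$ there. Set
\[
U_n^+(x)=\f{u(\zeta,\nu_n)}{a_n B(\zeta)^{-n}},\qquad
U_n^-(x)=\f{u(\zeta,\nu^{(r)}_{-n})}{a^{(r)}_{-n}B(\zeta)^{n}},
\]
so that \eqref{7.10} is exactly the assertion $p_{n-1}(x;\nu)=\alpha(x;\nu)U_n^-(x)+\beta(x;\nu)U_n^+(x)$. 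By \eqref{5.9}, $U_n^+(x)=u_n(\zeta;\nu)$ is the Jost solution of $\ti J_\nu$; applying \eqref{5.9} to $\ti J_{\nu^{(r)}}$ gives $U_n^-(x)=u_{-n}(\zeta;\nu^{(r)})$, i.e.\ the Jost solution of $\ti J_{\nu^{(r)}}$ read backwards. Since $d\nu\in\calT_\fre$, both families are defined for all $n\in\bbZ$, and combining \eqref{5.6}--\eqref{5.7} with the reflection symmetry \eqref{7.9} (which carries solutions of the $\ti J_{\nu^{(r)}}$ recursion to solutions of the $\ti J_\nu$ recursion) shows that $U_\bddot^+(x)$ and $U_\bddot^-(x)$ are both solutions of the $\ti J_\nu$ recursion; the precise index shifts appearing in \eqref{7.10} are exactly what makes this bookkeeping close up.

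To see the two solutions are independent I would estimate their size. By \eqref{1.19x}, $\abs{B(\zeta)}=e^{-G_\fre(x)}$ with $G_\fre(x)>0$ since $x\notin\fre$; and, as in the proof of Corollary~\ref{C6.7}, $u(\zeta;\,\cdot\,)$ and the Jacobi parameters stay bounded above and below away from $0$ as their argument ranges over $\calT_\fre$. Hence, uniformly for $x$ in a compact $K\subset\bbC\setminus[\alpha_1,\beta_{\ell+1}]$,
\[
\abs{U_n^+(x)}\asymp e^{-nG_\fre(x)},\qquad \abs{U_n^-(x)}\asymp e^{\,nG_\fre(x)}.
\]
Since $U^+$ decays and $U^-$ blows up as $n\to+\infty$, the two cannot be proportional, so their Wronskian $W(x)=a_m\bigl(U_{m+1}^+(x)U_m^-(x)-U_m^+(x)U_{m+1}^-(x)\bigr)$ --- independent of $m$ --- is a nonvanishing, analytic function of $x$ on $\bbC\setminus[\alpha_1,\beta_{\ell+1}]$, and $\{U_\bddot^+(x),U_\bddot^-(x)\}$ is a basis of the two-dimensional solution space. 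Thus $\alpha(x;\nu),\beta(x;\nu)$ exist and are unique, and Cramer's rule gives them, up to sign, as the quotients $\Wr(p_{\bddot-1},U^-)/W(x)$ and $\Wr(p_{\bddot-1},U^+)/W(x)$.

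It remains to check these coefficients are nonvanishing and continuous. By the Wronskian identity \eqref{7.4}--\eqref{7.5}, $\Wr(p_{\bddot-1},U^+)=\pm u(\zeta;\nu)$, which is nonzero for $x$ off the hull; and $\Wr(p_{\bddot-1},U^-)$ is nonzero there too, for if it vanished then $p_{n-1}(x;\nu)$ would be a multiple of $U_n^-(x)$, hence tend to $0$ as $n\to-\infty$, whereas reflecting via \eqref{7.9} turns $p_{\bddot-1}(\,\cdot\,;\nu)$ into a solution of the $\ti J_{\nu^{(r)}}$ recursion that vanishes at a site where the $\ti J_{\nu^{(r)}}$ Jost solution does not, and is therefore linearly independent of it and grows. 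Since $W(x)$, $u(\zeta;\nu)$, and these Wronskians depend analytically on $x$ with $W(x)\neq0$, the coefficients $\alpha(\,\cdot\,;\nu)$ and $\beta(\,\cdot\,;\nu)$ are nonvanishing and analytic --- in particular continuous --- on $\bbC\setminus[\alpha_1,\beta_{\ell+1}]$; joint continuity in $(x,\nu)$ follows from the continuity of the Jost data recorded in paper~I.

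Finally, multiply \eqref{7.10} by $B(\zeta)^n$:
\[
p_{n-1}(x;\nu)B(\zeta)^n=\alpha(x;\nu)\,\f{u(\zeta,\nu^{(r)}_{-n})}{a^{(r)}_{-n}}+\beta(x;\nu)\,\f{u(\zeta,\nu_n)}{a_n}\,B(\zeta)^{2n}.
\]
The second summand is $O(e^{-2nG_\fre(x)})$, hence $o(1)$ uniformly on $K\times\calT_\fre$. In the first summand, $n\mapsto\nu^{(r)}_{-n}$ and $n\mapsto a^{(r)}_{-n}$ run over a translate of the almost periodic orbit on $\calT_\fre$ (Corollary~7.4 of paper~I), so composing with the continuous maps $u(\zeta;\,\cdot\,)$ and division makes that summand almost periodic in $n$; therefore $p_{n-1}(x;\nu)B(\zeta)^n$ is asymptotically almost periodic. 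Moreover $\alpha(x;\nu)$, $u(\zeta;\nu^{(r)}_{-n})$, and $a^{(r)}_{-n}$ are bounded above and below away from $0$ uniformly for $x\in K$, $\nu\in\calT_\fre$, and $n$, so the first summand is bounded away from $0$ and $\infty$; absorbing the $o(1)$ second summand, and the finitely many small $n$, into the constant yields \eqref{7.10a}. I expect the real work to lie not in the size estimates but in the two reflection-bookkeeping steps: arranging \eqref{7.9} so that precisely the shifts of \eqref{7.10} appear in $U^-$, and checking that the reflected orthonormal polynomials are the growing (second-kind) solution, so that $\beta$ is genuinely nonzero.
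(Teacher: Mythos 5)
Your proposal is correct and follows essentially the same route as the paper: expand $p_{\bddot-1}$ in the basis of the two Jost solutions $u_n(\z(x);\nu)$ and $u_{-n}(\z(x);\nu^{(r)})$, identify the coefficients by Wronskians (with $\Wr(p_{\bddot-1},u_\bddot)=-u(\z(x);\nu)$ giving the nonvanishing of $\alpha$), and read off the almost periodicity and two-sided bounds from the exponential decay of $B(\z(x))^{2n}$ together with uniform bounds on $u(\z(x);\nu_n)$ over the compact torus. The only substantive difference is that you also verify the nonvanishing of $\beta$ (via $p_{-1}=0\neq u_0(\z(x);\nu^{(r)})$), a point the paper's proof leaves implicit.
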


\begin{proof} Define
\begin{equation}\lb{7.11}
u_n^+ (x;\nu) = u_n (\z(x);\nu) \qquad u_n^-(x;\nu) = u_{-n}^+ (x;\nu^{(r)})
\end{equation}
Then $u_n^\pm$ are two solutions of
\begin{equation}\lb{7.12}
a_n v_{n+1} +b_n v_n + a_{n-1} v_{n-1} = xv_n
\end{equation}
and they are linearly independent since one is $L^2$ at $+\infty$ and the other at $-\infty$, and $x$ is not an
eigenvalue of $\ti J_\nu$.

Since $p_{n-1}(x;\nu)$ also solves \eqref{7.12}, we have
\begin{equation}\lb{7.13}
p_{n-1}(x;\nu)=\alpha(x;\nu) u_n^-(x;\nu) + \beta(x;\nu) u_n^+ (x;\nu)
\end{equation}
and Wronskian formulae for $\alpha$ and $\beta$ show that they are real analytic in $\nu\in\calT_\fre$ and analytic
in $x\in\bbC\setminus [\alpha_1, \beta_{\ell+1}]$.

\eqref{7.10} then follows from Theorem~9.2 of paper~I.

Since $\abs{B}<1$ on $\bbD$, the second term multiplied by $B^n$ is exponentially small, and the first is almost
periodic, so $p_{n-1} B^n$ is almost periodic up to an exponentially small error.

The upper bound in \eqref{7.10a} is immediate from \eqref{7.10}, $\abs{B}<1$, and the almost periodicity of $u(z;\nu_n)$.

Since $x$ is not an eigenvalue of $\ti J_\nu$, $\alpha$ is nonvanishing, which proves that for any $K$ and $n\geq N$,
we have a lower bound. Since $p_n$ has no zero in $K$, a lower bound on $n<N$ is immediate. That proves \eqref{7.10a}.
\end{proof}

\begin{theorem}[Szeg\H{o} asymptotics]\lb{T7.4} Let $d\mu\in\Sz(\fre)$ and let $d\ti\mu$ be the measure of the Jacobi matrix
in $\calT_\fre$ for which \eqref{1.13} holds. Then, uniformly on compact subsets of $\bbC\setminus [\alpha_1, \beta_{\ell+1}]$,
\begin{equation}\lb{7.12x}
\f{p_n(x;\mu)}{p_n(x;\ti\mu)} \to \f{u(\z(x);\mu)}{u(\z(x);\ti\mu)}
\end{equation}
In particular, $p_n(x;\mu)B(\z(x))^n$ is asymptotically almost periodic.
\end{theorem}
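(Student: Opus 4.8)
The plan is to read off the polynomial ratio from the Green's-function factorization \eqref{7.2}, combined with the Wronskian identity \eqref{7.5}, and then feed in the two convergence facts already established: $G_{nn}/\wti G_{nn}\to 1$ (Theorem~\ref{T7.2}) and $u_n(\cdot;\mu)/u_n(\cdot;\ti\mu)\to 1$ uniformly on compacts of $\calF^\intt$ (Theorem~\ref{T6.6}). First observe that the hypotheses of Theorems~\ref{T7.1}--\ref{T7.2} are met: by Theorem~\ref{T6.5}, every $d\mu\in\Sz(\fre)$ obeys \eqref{1.13} with precisely the $d\ti\mu\in\calT_\fre$ appearing in the statement. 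For $x\in\bbC\setminus[\alpha_1,\beta_{\ell+1}]$ one has $\z(x)\in\calF^\intt$, so $U_n(x)=u_n(\z(x);\mu)$, $\wti U_n(x)=u_n(\z(x);\ti\mu)$, $u(\z(x);\mu)$, $u(\z(x);\ti\mu)$, $p_{n-1}(x;\mu)$, $p_{n-1}(x;\ti\mu)$ are all analytic there; moreover $u(\z(x);\ti\mu)$ and $p_{n-1}(x;\ti\mu)$ are nonvanishing there because the eigenvalues and point masses of $\ti J$ all lie in $[\alpha_1,\beta_{\ell+1}]$.

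Writing \eqref{7.2} and \eqref{7.5} for $J$ and for $\ti J$ and dividing one by the other gives, on $\bbC\setminus([\alpha_1,\beta_{\ell+1}]\cup\sigma(J))$,
\[
\f{G_{nn}(x)}{\wti G_{nn}(x)}
= \f{p_{n-1}(x;\mu)}{p_{n-1}(x;\ti\mu)}\cdot\f{U_n(x)}{\wti U_n(x)}\cdot\f{u(\z(x);\ti\mu)}{u(\z(x);\mu)}.
\]
By Theorem~\ref{T7.2} the left side converges to $1$, and by Theorem~\ref{T6.6}, i.e.\ \eqref{6.32}, the second factor on the right converges to $1$, both uniformly on compact subsets of that set. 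Hence
\[
\f{p_{n-1}(x;\mu)}{p_{n-1}(x;\ti\mu)}\longrightarrow\f{u(\z(x);\mu)}{u(\z(x);\ti\mu)}
\]
uniformly on compact subsets of $\bbC\setminus([\alpha_1,\beta_{\ell+1}]\cup\sigma(J))$; relabeling $n-1$ as $n$ gives \eqref{7.12x} on that set.

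It remains to remove $\sigma(J)$ from the excluded set. The eigenvalues of $J$ lying outside $[\alpha_1,\beta_{\ell+1}]$ are isolated there (accumulating at most at $\alpha_1$ or $\beta_{\ell+1}$), so any compact $K\subset\bbC\setminus[\alpha_1,\beta_{\ell+1}]$ contains only finitely many of them. The functions $p_n(\cdot;\mu)/p_n(\cdot;\ti\mu)$ are analytic on all of $\bbC\setminus[\alpha_1,\beta_{\ell+1}]$; around each bad point in $K$ pick a small closed disk contained in $\bbC\setminus[\alpha_1,\beta_{\ell+1}]$ with boundary circle disjoint from $\sigma(J)$. The sequence is uniformly Cauchy on those circles by the previous step, so by the maximum modulus principle it is uniformly Cauchy on the disks, hence on $K$. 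Its limit must be the analytic continuation of $u(\z(x);\mu)/u(\z(x);\ti\mu)$ (which is itself analytic on all of $\bbC\setminus[\alpha_1,\beta_{\ell+1}]$, with zeros exactly at the eigenvalues of $J$ there), proving \eqref{7.12x} as stated. Finally, since $p_n(x;\mu)B(\z(x))^{n+1}=\bigl(p_n(x;\mu)/p_n(x;\ti\mu)\bigr)\,p_n(x;\ti\mu)B(\z(x))^{n+1}$, and the first factor converges while the second is asymptotically almost periodic by Theorem~\ref{T7.3}, the product is asymptotically almost periodic; dividing by the fixed nonvanishing function $B(\z(x))$ shows the same for $p_n(x;\mu)B(\z(x))^n$.

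The algebraic heart --- dividing the two copies of \eqref{7.2} --- is immediate once Theorems~\ref{T7.2} and \ref{T6.6} are in place, so I expect the only real work to be the bookkeeping: verifying that $\z$ maps $\bbC\setminus[\alpha_1,\beta_{\ell+1}]$ into $\calF^\intt$ (so that Theorem~\ref{T6.6} genuinely applies there), reconciling the several ``uniformly on compacts'' domains, and carrying out the Vitali/maximum-principle continuation across the eigenvalues of $J$ outside the convex hull.
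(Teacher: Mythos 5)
Your proposal is correct and is essentially the paper's own proof: the identity you derive by dividing the two copies of \eqref{7.2} (using \eqref{7.5}) is exactly the paper's \eqref{7.13x}, the inputs are the same (Theorems~\ref{T7.2} and \ref{T6.6}), and the final maximum-principle continuation across the finitely many points of $\sigma(J)$ in any compact subset of $\bbC\setminus[\alpha_1,\beta_{\ell+1}]$ is the same device the paper invokes, relying likewise on the nonvanishing of $p_n(\cdot;\ti\mu)$ off $[\alpha_1,\beta_{\ell+1}]$.
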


\begin{remarks} 1. It is not hard to see that the last statement extends to $\bbC\setminus\fre$.

\smallskip
2. In the periodic case, one also has Szeg\H{o} asymptotics in the gaps of $\fre$ except at finitely many points.

\smallskip
3. Since the monic orthogonal polynomials, $P_n(x)$, are related to the orthonormal ones via $P_n(x)=(a_1\cdots a_n) \, p_n(x)$, Szeg\H{o} asymptotics for the monic polynomials immediately follows from \eqref{6.34} and \eqref{7.12x},
\[
\f{P_n(x;\mu)}{P_n(x;\ti\mu)} \to \f{u(\z(x);\mu)/u(0;\mu)}{u(\z(x);\ti\mu)/u(0;\ti\mu)}
\]
\end{remarks}

\begin{proof} It follows from \eqref{7.2} and \eqref{7.5} that
\begin{equation}\lb{7.13x}
\f{p_{n-1} (x;\mu)}{p_{n-1}(x;\ti\mu)} = \f{G_{nn}(x)}{\wti G_{nn}(x)} \,\,
\f{u_n(\z(x);\ti\mu)}{u_n(\z(x);\mu)} \,\, \f{u(\z(x);\mu)}{u(\z(x);\ti\mu)}
\end{equation}
The result is immediate from \eqref{7.7x} and \eqref{6.32} since we can include points below $\alpha_1$
and above $\beta_{\ell+1}$ by the maximum principle and the fact that $p_n(x;\ti\mu)$ is non-vanishing
on $\bbR\setminus [\alpha_1, \beta_{\ell+1}]$.
\end{proof}

%%%%%%%%%%%%%%%%%%%%%%%%%%%%%%%%%%%%%%%%%%%%%%%%%%%%%%%%%%%%%%
\section{$L^2$ Szeg\H{o} Asymptotics on the Spectrum} \lb{s8}
%%%%%%%%%%%%%%%%%%%%%%%%%%%%%%%%%%%%%%%%%%%%%%%%%%%%%%%%%%%%%%

By a standard approximation argument going back to Szeg\H{o}
\cite{Sz20-21}, the function
\[
\int_0^{2\pi} \f{e^{i\theta}+z}{e^{i\theta}-z}
\log(\Ima M(e^{i\theta}))\f{d\theta}{2\pi}
\]
is in $H^2(\bbD)$, so it has nontangential boundary values for a.e.\
$z\in\partial\bbD$.
Since convergent Blaschke products (with a Blaschke condition) are well
known to have boundary values (see
\cite[pp.~249, 310]{Rudin}), $u(z;\mu)$ has boundary values for a.e.\
$z\in\partial\bbD$ and all $d\mu\in\Sz(\fre)$,
and so does $u_n(z;\mu)$ by \eqref{5.9}.

Thus, for Lebesgue a.e.\ $x\in\fre$,
\begin{equation}\lb{8.1}
u_n^+(x;\mu) \equiv u_n(\z(x-i0);\mu)
\end{equation}
exists. Moreover, since $\Ima m(x+i0)\neq 0$ for a.e.\ $x\in\fre$, we
can define a linearly independent solution $u_n^-$ by
\begin{equation}\lb{8.2}
u_n^- (x;\mu) \equiv \ol{u_n^+(x;\mu)}
\end{equation}

This leads to an expansion:
\begin{align}
p_n(x)
&= \f{\Wr(p_{\bddot-1}, u_\bddot^-) u_{n+1}^+(x;\mu) - \Wr(p_{\bddot-1},
u_\bddot^+) u_{n+1}^-(x;\mu)}{\Wr(u_\bddot^+, u_\bddot^-)} \lb{8.3} \\
&= \f{\ol{u_0^+(x;\mu)} u_{n+1}^+(x;\mu) - u_0^+(x;\mu)
\ol{u_{n+1}^+(x;\mu)}}{\Wr(u_\bddot^+, u_\bddot^-)} \lb{8.4}
\end{align}
Given the asymptotics of $u_n^+$ to $\ti u_n^+$, this explains the
expected $L^2$ asymptotic result we'll prove:

\begin{theorem}\lb{T8.1} Let $d\mu\in\Sz(\fre)$ have the form
\eqref{1.6} and let $\ti u_n^+(x)$ be the Jost solution for
the asymptotic point in $\calT_\fre$ {\rm{(}}i.e., the point given by
\eqref{1.13}{\rm{)}}. Then
\begin{equation}\lb{8.5}
\int_\fre\, \biggl| p_n(x) - \f{\Ima (\ol{u(\z(x);\mu)}\, \ti
u_{n+1}^+(x))}{\pi v_\fre(x)}\biggr|^2 w(x)\, dx \to 0
\end{equation}
and
\begin{equation}\lb{8.6}
\int \abs{p_n(x)}^2 \, d\mu_\s(x)\to 0
\end{equation}
where $v_\fre$ is the weight for the reference measure used in \eqref{5.2}.
\end{theorem}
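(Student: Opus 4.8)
The plan is to first obtain an \emph{exact} boundary representation of $p_n$ on $\fre$ in Jost terms, then substitute the Jost data of $\ti\mu$ for that of $\mu$ and control the errors in $L^2$.

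\smallskip
\textbf{Step 1: an exact formula on $\fre$.} I would evaluate the Wronskian in \eqref{8.4} at $m=0$, using the conventions $a_0=1$, $u_0=u(\dott;\mu)$, $u_1=u(\dott;\mu)M$, so that $u_0^+(x;\mu)=u(\z(x);\mu)$ and $u_1^+(x;\mu)=u(\z(x);\mu)M(\z(x-i0))$. Since $u_0^-=\ol{u_0^+}$ and $u_1^-=\ol{u_1^+}$, a one-line computation gives $\Wr(u_\bddot^+,u_\bddot^-)=2i\,|u(\z(x);\mu)|^2\,\Ima M(\z(x-i0))=2\pi i\,|u(\z(x);\mu)|^2 w(x)$, where $\Ima M(\z(x-i0))=-\Ima m(x-i0)=\Ima m(x+i0)=\pi w(x)$ by \eqref{4.14a} and Herglotz reflection. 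Taking modulus-squared nontangential boundary values in \eqref{5.2} (Blaschke factors have modulus $1$ a.e.\ on $\partial\bbD$, and the modulus of the Szeg\H{o} exponential converges a.e.\ to $(v_\fre/w)^{1/2}$ by the Poisson/Fatou theorem, since $\log(v_\fre/w)\in L^1$) yields $|u(\z(x);\mu)|^2 w(x)=v_\fre(x)$ for a.e.\ $x\in\fre$, hence $\Wr(u_\bddot^+,u_\bddot^-)=2\pi i\,v_\fre(x)$. Writing the numerator of \eqref{8.4} as $2i\,\Ima(\ol{u_0^+}u_{n+1}^+)$ gives, for a.e.\ $x\in\fre$,
\[
p_n(x;\mu)=\f{\Ima\bigl(\ol{u(\z(x);\mu)}\,u_{n+1}^+(x;\mu)\bigr)}{\pi v_\fre(x)}.
\]

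\smallskip
\textbf{Step 2: proof of \eqref{8.5}.} Let $r_n$ be the function subtracted from $p_n$ in \eqref{8.5}. By Step 1, $p_n-r_n=\Ima\bigl(\ol{u(\z(\dott);\mu)}(u_{n+1}^+(\dott;\mu)-\ti u_{n+1}^+)\bigr)/(\pi v_\fre)$, so, using $|\Ima(\bar a b)|\le|a|\,|b|$ and $|u(\z(x);\mu)|^2 w(x)=v_\fre(x)$, one gets $\int_\fre|p_n-r_n|^2 w\,dx\le\pi^{-2}\int_\fre|u_{n+1}^+(x;\mu)-\ti u_{n+1}^+(x)|^2\,v_\fre(x)^{-1}\,dx$. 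By \eqref{5.9} and $|B(\z(x))|=1$ a.e.\ on $\fre$, this equals $\pi^{-2}\int_\fre|a_{n+1}^{-1}u(\z(x);\mu_{n+1})-\ti a_{n+1}^{-1}u(\z(x);\ti\mu_{n+1})|^2 v_\fre(x)^{-1}\,dx$. Splitting off $(a_{n+1}^{-1}-\ti a_{n+1}^{-1})u(\z(x);\ti\mu_{n+1})$, and using $|u(\z(x);\ti\mu_{n+1})|^2=v_\fre(x)/w(x;\ti\mu_{n+1})$, the uniform bound $\int_\fre dx/w(x;\nu)\le C$ for $\nu\in\calT_\fre$ (torus densities satisfy $w(\dott;\nu)\gtrsim\dist(x,\bbR\setminus\fre)^{-1/2}$ uniformly, by paper~I), $a_n/\ti a_n\to1$, $\abs{a_n-\ti a_n}\to0$, and $a_n,\ti a_n$ bounded away from $0$ and $\infty$, I reduce \eqref{8.5} to $\int_\fre|u(\z(x);\mu_n)-u(\z(x);\ti\mu_n)|^2 v_\fre(x)^{-1}\,dx\to0$. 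This follows from Theorem~\ref{T6.6} and its proof: $u(\dott;\mu_n)-u(\dott;\ti\mu_n)\to0$ uniformly on $\bbD$, so its nontangential boundary values go to $0$ uniformly a.e.\ on $\fre$, while $\int_\fre dx/v_\fre<\infty$ since $1/v_\fre\asymp\dist(x,\bbR\setminus\fre)^{1/2}$. (If one prefers not to invoke the ``uniformly on $\bbD$'' assertion, pass to a subsequence along which $J^{(n_j)}$ and $\ti J^{(n_j)}$ converge to a common point of $\calT_\fre$; the limiting Jost function is bounded on $\ol\bbD\setminus\calL$, which supplies the needed uniform control, and a subsequence argument closes the bound.)

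\smallskip
\textbf{Step 3: proof of \eqref{8.6} via $\int_\fre|r_n|^2w\,dx\to1$.} Write $u(\z(x);\mu)=e^{i\phi(x)}(v_\fre(x)/w(x))^{1/2}$, $\phi$ real. Then $\pi^2 v_\fre|r_n|^2 w=|\Ima(e^{-i\phi}\ti u_{n+1}^+)|^2=\tfrac12\bigl(|\ti u_{n+1}^+|^2-\Real(e^{-2i\phi}(\ti u_{n+1}^+)^2)\bigr)$, and by \eqref{5.9}, on $\fre$, $|\ti u_{n+1}^+(x)|^2=\ti a_{n+1}^{-2}v_\fre(x)/w(x;\ti\mu_{n+1})$ and $(\ti u_{n+1}^+(x))^2=\ti a_{n+1}^{-2}B(\z(x))^{2(n+1)}u(\z(x);\ti\mu_{n+1})^2$. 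Hence
\[
\int_\fre|r_n|^2 w\,dx=\f{1}{2\pi^2\ti a_{n+1}^2}\int_\fre\f{dx}{w(x;\ti\mu_{n+1})}-\f{1}{2\pi^2\ti a_{n+1}^2}\,\Real\!\int_\fre B(\z(x))^{2(n+1)}\,\f{e^{-2i\phi(x)}u(\z(x);\ti\mu_{n+1})^2}{v_\fre(x)}\,dx .
\]
The last integral tends to $0$ by an oscillatory lemma: for $g\in L^1(\fre,dx)$, $\int_\fre g(x)B(\z(x))^N\,dx\to0$ as $N\to\infty$ (parametrize each band by $s=\arg B(\z(x))$, a monotone absolutely continuous change of variables by the properties of $B$ from paper~I, and apply Riemann--Lebesgue, splitting off edge neighborhoods using integrability); the $n$-dependence of the amplitude is handled along subsequences on which $\ti\mu_n$ converges in $\calT_\fre$, by dominated convergence in $L^1$ (the amplitudes are $\lesssim\dist(x,\bbR\setminus\fre)^{1/2}$). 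Applying the \emph{identical} expansion to Step~1's exact formula for $d\ti\mu\in\calT_\fre$ (whose Jost solution is precisely $\ti u_{n+1}^+$) gives $\int_\fre|p_n(x;\ti\mu)|^2 w(x;\ti\mu)\,dx=(2\pi^2\ti a_{n+1}^2)^{-1}\int_\fre dx/w(x;\ti\mu_{n+1})+o(1)$; since $d\ti\mu$ has at most $\ell$ point masses, all in the gaps, where $|p_n(x;\ti\mu)|\le C|B(\z(x))|^n$ with $|B(\z(x))|<1$ by \eqref{7.10a}, and no singular continuous part (paper~I), one has $\int|p_n(\dott;\ti\mu)|^2\,d\ti\mu_\s\to0$, so $\int_\fre|p_n(\dott;\ti\mu)|^2 w(\dott;\ti\mu)\,dx\to1$. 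Combining, $(2\pi^2\ti a_{n+1}^2)^{-1}\int_\fre dx/w(x;\ti\mu_{n+1})\to1$, whence $\int_\fre|r_n|^2 w\,dx\to1$. Finally \eqref{8.5} gives $\bigl|\,\|p_n\|_{L^2(\fre,w\,dx)}-\|r_n\|_{L^2(\fre,w\,dx)}\bigr|\to0$, so $\int_\fre|p_n|^2 w\,dx\to1$, and therefore $\int|p_n|^2\,d\mu_\s=1-\int_\fre|p_n|^2 w\,dx\to0$, which is \eqref{8.6}.

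\smallskip
\textbf{Expected main obstacle.} The crux is the $L^2$-convergence of the stripped Jost functions in Step~2: for $d\mu\in\Sz(\fre)$ the individual functions $u(\dott;\mu_n)$ need not be bounded on $\bbD$ (the Szeg\H{o} weight may be small), so uniform-on-compacts convergence does not automatically give convergence of boundary values in $L^2(\fre,dx/v_\fre)$; the remedy is to localize along subsequences converging in the \emph{compact} torus $\calT_\fre$, where the limiting Jost function extends boundedly to $\partial\bbD$. The secondary technical point is the absolute continuity and monotonicity of $\arg B$ on $\partial\bbD$ underlying the oscillatory lemma, which should be extracted from paper~I.
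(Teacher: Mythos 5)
Your Step 1 is correct and is exactly the identity underlying the paper's proof: \eqref{8.4} together with $\Wr(u_\bddot^+,u_\bddot^-)=2\pi i\,v_\fre(x)$ (the paper's Lemma~\ref{L8.2} and the computation \eqref{8.7}--\eqref{8.10}). Your Step 3 likewise reproduces, in slightly different packaging, the paper's Propositions~\ref{P8.4} and \ref{P8.6} (via Lemma~\ref{L8.3} and the oscillation Lemma~\ref{L8.5}, which the paper proves by orthonormality of $\{B^n\}$ on $\partial\bbD$ but explicitly notes can be done by stationary phase as you propose). The problem is Step 2, which is where the entire difficulty of the theorem lives.

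The estimate $\abs{p_n-r_n}^2w\leq \pi^{-2}\abs{u_{n+1}^+(x;\mu)-\ti u_{n+1}^+(x)}^2 v_\fre(x)^{-1}$ discards the cancellation inside $\Ima(\ol{u}\,\cdot\,)$, and the resulting right-hand side can be identically $+\infty$: by \eqref{5.9} and Lemma~\ref{L8.2}, $\int_\fre \abs{u_{n+1}^+(x;\mu)}^2 v_\fre(x)^{-1}dx = a_{n+1}^{-2}\int_\fre dx/w(x;\mu_{n+1})$, and the Szeg\H{o} condition controls $\log w$, not $1/w$ --- a weight vanishing like $\exp(-\abs{x-x_0}^{-1/2})$ at an interior point is in $\Sz(\fre)$ but has $1/w\notin L^1$, and coefficient stripping does not repair this (by \eqref{2.11}, $1/w_1=\abs{a_1m}^2/w$). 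So the $L^2(\fre,dx/v_\fre)$ convergence of boundary values of $u(\dott;\mu_n)-u(\dott;\ti\mu_n)$ that you need is not merely unproven; the individual terms need not even lie in that space. Neither of your two suggested remedies works: uniform convergence on compact subsets of $\bbD$ (Theorems~\ref{T5.3}, \ref{T6.6}) gives no control whatsoever of boundary values (consider $\veps_n/(1-(1-\delta_n)z)$ with $\delta_n\downarrow 0$ fast), and passing to a subsequence whose \emph{limit} is a bounded torus Jost function does not make the approximants bounded near $\partial\bbD$. You have correctly identified this as the main obstacle, but the proposed fix does not close it. The paper avoids the issue entirely: it expands $\norm{p_n-k_n^+-k_n^-}_w^2$ into inner products, computes $\norm{k_n^\pm}_w^2=\tfrac12$ exactly, kills the cross term by oscillation, and proves the one genuinely hard statement $\Real\jap{k_n^-,p_n}_w\to\tfrac12$ (which is precisely equivalent to your Step 2, given \eqref{8.14}--\eqref{8.16}) by an automorphy/contour argument on $\partial\calF$ reducing to a residue at $z=0$, with a separate Peherstorfer--Yuditskii-style approximation to handle infinitely many bound-state poles (Propositions~\ref{P8.10}, \ref{P8.8}, \ref{P8.9}). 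That residue computation, or some substitute exploiting the cancellation in $\Ima(\ol{u}\,\cdot\,)$, is the missing ingredient in your proposal.
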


\begin{remarks} 1. $\pi v_\fre(x)$ enters because of the following
calculation:
\begin{align}
\Wr(\ti u_\bddot^+, \ti u_\bddot^-)
&= \ti a_0 (\ti u_1^+\, \ol{\ti u_0^+} - \ol{\ti u_1^+}\, \ti u_0^+)
\lb{8.7} \\
&= -(\ti a_0)^2 \abs{\ti u_0^+}^2 2i \Ima \ti m(x-i0) \lb{8.8} \\
&= 2i \, \f{v_\fre(x)}{\ti w(x)}\, \pi \ti w(x) \lb{8.9} \\
&= 2\pi i \, v_\fre(x) \lb{8.10}
\end{align}
%and the $(2i)^{-1}$ is used to get $\Ima (\cdot)$.
In the above, \eqref{8.8} comes from \eqref{1.21} and \eqref{5.8},
and \eqref{8.9} comes from \eqref{4.14a}, \eqref{5.2} (see
Lemma~\ref{L8.2} below), and \eqref{5.9}, which says that $\ti u_0^+=\ti
a_0^{-1} u(\dott,\ti\mu)$.

\smallskip
2. In case $\fre=[-2,2]$, \eqref{8.5} becomes
\[
\int_{-2}^2\, \biggl| p_n(x) - \f{\Ima (\ol{u(\z(x);\mu)} \,
e^{i(n+1)\theta(x)})}{\sin(\theta(x))}\biggr|^2 w(x)\, dx  \to 0
\]
where $\theta(x)$ is given by $\z(x)=e^{i\theta(x)}$. This is a result
of \cite{PYpams}; see also \cite{Jost1} and
\cite[Sect.~3.7]{Rice}.
\end{remarks}

We define
\begin{align}
k_n^+(x) &= \f{\ol{u(\z(x);\mu)}\, \ti u_{n+1}^+(x)}{2\pi i v_\fre(x)}
\lb{8.11} \\
k_n^-(x) &= \ol{k_n^+(x)} \lb{8.12}
\end{align}
in which case, \eqref{8.5}--\eqref{8.6} become
\begin{equation}\lb{8.13}
\norm{p_n-k_n^+ -k_n^-}_w^2 + \norm{p_n}_\s^2 \to 0
\end{equation}
where $\norm{\cdot}_w$ is the $L^2(\fre,w\,dx)$ norm (we use $\jap{\, ,
\, }_w$ for the inner product) and
$\norm{\cdot}_\s$ is the $L^2(\bbR,d\mu_\s)$ norm. Clearly, \eqref{8.13}
follows from:
\begin{gather}
\norm{p_n}_w^2 + \norm{p_n}_\s^2 =1 \lb{8.14} \\
\norm{k_n^\pm}_w^2 = \tfrac12 \lb{8.15} \\
\lim_{n\to\infty} \jap{k_n^-, k_n^+}_w =0 \lb{8.16} \\
\lim_{n\to\infty} \Real \jap{k_n^-, p_n}_w = \tfrac12 \lb{8.17}
\end{gather}

\eqref{8.14} is the normalization condition on $p_n$, so we only need to
prove \eqref{8.15}--\eqref{8.17}.
We'll need some preliminaries:

\begin{lemma}\lb{L8.2} For a.e. $z\in\partial\bbD$, the boundary value
of $u(z;\mu)$ obeys \begin{equation}\lb{8.18}
\abs{u(z;\mu)}^2 = \f{v_\fre(\x(z))}{w(\x(z))}
\end{equation}
\end{lemma}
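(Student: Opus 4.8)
The plan is to read the modulus of the boundary value of $u(\cdot;\mu)$ directly off the product formula \eqref{5.2}, using the factorization $u=\beta\veps$ into Blaschke part $\beta$ and Szeg\H{o} part $\veps$ from \eqref{6.1}. Both $\beta$ and $\veps$ are of bounded characteristic on $\bbD$, so each has nontangential boundary values a.e.\ on $\partial\bbD$ (as already recalled at the start of this section for $u$ itself), and for a.e.\ $\theta$ the boundary value of the product is the product of the boundary values, so $\abs{u(e^{i\theta};\mu)}=\abs{\beta(e^{i\theta})}\,\abs{\veps(e^{i\theta})}$. It then suffices to compute $\abs{\beta}$ and $\abs{\veps}$ on $\partial\bbD$ separately.

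For the Blaschke part, the eigenvalues of $J$ obey \eqref{4.4}, so by Proposition~4.8 of paper~I the orbit family $\{\gamma(z_k)\}_{k;\,\gamma\in\Gamma}$ obeys the Blaschke condition in $\bbD$ and $\beta(z)=\prod_k B(z,z_k)$ is a genuine convergent Blaschke product; such a product has unimodular boundary values a.e.\ (see \cite[pp.~249, 310]{Rudin}), so $\abs{\beta(e^{i\theta})}^2=1$ for a.e.\ $\theta$.

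For the Szeg\H{o} part, write $\veps(z)=\exp(F(z))$ with
\[
F(z)=\f{1}{4\pi}\int_0^{2\pi}\f{e^{i\theta}+z}{e^{i\theta}-z}\,\log\!\biggl(\f{v_\fre(\x(e^{i\theta}))}{w(\x(e^{i\theta}))}\biggr)\,d\theta,
\]
so $\abs{\veps(z)}=e^{\Real F(z)}$ and $\Real F$ is one-half the Poisson integral of $g(\theta)\equiv\log\bigl(v_\fre(\x(e^{i\theta}))/w(\x(e^{i\theta}))\bigr)$. As noted after \eqref{5.3} (via the Szeg\H{o} conditions for $d\mu$ and $d\nu_\fre$ together with eqn.~(4.54) of paper~I), $g\in L^1(\partial\bbD,d\theta/2\pi)$, so Fatou's theorem on nontangential limits of Poisson integrals gives $\Real F(re^{i\phi})\to\tfrac12\,g(\phi)$ for a.e.\ $\phi$ as $r\uparrow1$. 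Hence $\abs{\veps(e^{i\phi})}^2=e^{2\Real F(e^{i\phi})}=e^{g(\phi)}=v_\fre(\x(e^{i\phi}))/w(\x(e^{i\phi}))$ for a.e.\ $\phi$. Combining with the Blaschke computation yields \eqref{8.18}. There is no genuine obstacle here; the only points needing care are the factor $\tfrac{1}{4\pi}$ in \eqref{5.2} (which is precisely what makes $\veps$ carry the square root of $v_\fre/w$, not $v_\fre/w$ itself) and the integrability $g\in L^1$ so that Fatou applies, and both are immediate from the definitions and from paper~I.
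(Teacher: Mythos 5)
Your proof is correct and is essentially the paper's own argument: the paper likewise notes that the convergent Blaschke product has unimodular boundary values a.e.\ and that the real part of the exponent is (half) the Poisson integral of $\log(v_\fre/w)$, so its nontangential limit gives \eqref{8.18}. Your version just spells out the Fatou step and the $\tfrac{1}{4\pi}$ normalization more explicitly.
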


\begin{proof} In \eqref{5.2}, $\abs{\prod_{k} B(z,z_k)}$ has $1$ as
boundary value, by standard results on Blaschke
products. By convergence of the Poisson kernel, for a.e.\ $z$ in
$\partial\bbD$, the real part of the exponential
converges to $\log(\f{v_\fre (\x(z))}{w(\x(z))})$.
\end{proof}

\begin{lemma}\lb{L8.3} For any $d\nu\in\calT_\fre$ with weight $w_\nu$,
we have
\begin{equation}\lb{8.19}
\int_\fre \f{dx}{w_\nu(x)} = 2\pi^2 a_0(\nu)^2
\end{equation}
\end{lemma}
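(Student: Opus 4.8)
The plan is to translate the reciprocal weight $1/w_\nu$ into the boundary value of the diagonal Green's function of the two-sided Jacobi matrix attached to $\nu$, and then to integrate, using that this operator has purely absolutely continuous spectrum equal to $\fre$.

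First I would pass to the whole line. Let $\ti J$ be the two-sided Jacobi matrix with parameters $\{a_n(\nu),b_n(\nu)\}_{n\in\bbZ}$, so that $d\nu$ is the spectral measure of $\ti J\restriction\ell^2(\{1,2,\dots\})$ and hence $m_\nu(z)=\jap{\delta_1,(\ti J^{+}-z)^{-1}\delta_1}$ with $\ti J^{+}=\ti J\restriction\ell^2([1,\infty))$; write also $m_-(z)=\jap{\delta_0,(\ti J^{-}-z)^{-1}\delta_0}$ with $\ti J^{-}=\ti J\restriction\ell^2((-\infty,0])$, a Herglotz function. Elements of the isospectral torus are reflectionless on $\fre$ and have $\sigma(\ti J)=\fre$ purely absolutely continuous (paper~I). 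Decoupling the single bond $a_0=a_0(\nu)$ joining sites $0$ and $1$ is a rank-two perturbation, so the resolvent identity yields the standard formulas
\[
G_{00}(z):=\jap{\delta_0,(\ti J-z)^{-1}\delta_0}=\f{m_-(z)}{1-a_0^2\,m_\nu(z)m_-(z)},\qquad G_{11}(z)=\f{m_\nu(z)}{1-a_0^2\,m_\nu(z)m_-(z)} .
\]

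Second I would invoke reflectionlessness, which forces $G_{00}(x+i0)$ and $G_{11}(x+i0)$ onto the positive imaginary axis for a.e.\ $x\in\fre$. Then $m_-(x+i0)/m_\nu(x+i0)=G_{00}(x+i0)/G_{11}(x+i0)>0$, and inserting this into the expression for $G_{11}$ and setting its real part to $0$ yields, at a.e.\ point of $\fre$ (those where $\Real m_\nu(x+i0)\neq0$, which excludes only a null set),
\[
a_0^2\,m_\nu(x+i0)\,\ol{m_-(x+i0)}=1 ,
\]
i.e.\ $m_-(x+i0)=1/\bigl(a_0^2\,\ol{m_\nu(x+i0)}\bigr)$ --- the familiar characterization of the reflectionless property at the $0$--$1$ bond, which I would either cite from paper~I or prove in the two lines just indicated. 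Substituting into the formula for $G_{00}$ collapses it to
\[
G_{00}(x+i0)=\f{1}{a_0^2\bigl(\ol{m_\nu(x+i0)}-m_\nu(x+i0)\bigr)}=\f{i}{2a_0^2\,\Ima m_\nu(x+i0)} ,
\]
and therefore, using $w_\nu(x)=\tfrac{1}{\pi}\Ima m_\nu(x+i0)$ from \eqref{4.14a},
\[
\f{1}{\pi}\,\Ima G_{00}(x+i0)=\f{1}{2\pi a_0^2\,\Ima m_\nu(x+i0)}=\f{1}{2\pi^2 a_0^2}\cdot\f{1}{w_\nu(x)} .
\]

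Finally I would integrate over $\fre$: since $\ti J$ has purely absolutely continuous spectrum equal to $\fre$, the spectral measure of $\ti J$ for $\delta_0$ is $\tfrac{1}{\pi}\Ima G_{00}(x+i0)\,dx$ supported on $\fre$, with total mass $\norm{\delta_0}^2=1$, whence
\[
\int_\fre\f{dx}{w_\nu(x)}=2\pi^2 a_0^2\int_\fre\f{1}{\pi}\,\Ima G_{00}(x+i0)\,dx=2\pi^2 a_0(\nu)^2 .
\]
The one substantive ingredient is the reflectionless relation $a_0^2 m_\nu(x+i0)\ol{m_-(x+i0)}=1$; the rest is the decoupling algebra and a total-mass computation. (Consistency check: for $\fre=[-2,2]$, $\nu$ is the free measure, $a_0(\nu)=1$, $w_\nu(x)=\tfrac{1}{2\pi}\sqrt{4-x^2}$, and indeed $\int_{-2}^{2}2\pi(4-x^2)^{-1/2}\,dx=2\pi^2$.)
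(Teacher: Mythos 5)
Your proposal is correct and follows the same strategy as the paper: both reduce the lemma to the pointwise identity $\tfrac{1}{\pi}\Ima \wti G_{00}(x+i0;\nu)=\bigl(2\pi^2 a_0(\nu)^2 w_\nu(x)\bigr)^{-1}$ and then integrate, using that $\ti J_\nu$ has purely a.c.\ spectrum $\fre$ so that the spectral measure of $\delta_0$ has total mass $1$. The only divergence is in how that identity is obtained: the paper writes $\wti G_{00}$ via the Wronskian of the Jost solution $u^+$ and its complex conjugate $u^-=\ol{u^+}$ and evaluates the Wronskian through \eqref{1.21} and \eqref{5.8}, whereas you use the two half-line $m$-functions, the decoupling formula for $\wti G_{00}$, and the reflectionless relation $a_0^2\,m_\nu(x+i0)\,\ol{m_-(x+i0)}=1$; these encode the same input (reflectionlessness of $\calT_\fre$) in two standard equivalent forms. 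One small point: your two-line derivation of the reflectionless relation from $\Real G_{00}=\Real G_{11}=0$ only works off the set where $\Real m_\nu(x+i0)=0$, and the claim that this set is null deserves a word of justification (for $d\nu\in\calT_\fre$ it follows from the fact that $m_\nu$ extends meromorphically to the Riemann surface, so its real part is real-analytic and not identically zero on each open band); alternatively, citing the reflectionless relation directly, as you offer to do, sidesteps this entirely.
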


\begin{proof} If $\wti G_{00}(z;\nu)$ is the Green's function of the
whole-line Jacobi matrix $\ti J_\nu$ and
$u_n^+(x;\nu)=u_n(\z(x+i0);\nu)$ the boundary value of the Jost
solution, then
\begin{align}
\wti G_{00}(x+i0;\nu)
&= \f{\ol{u_0^+(x;\nu)}\, u_0^+(x;\nu)}{a_0(\nu) [u_1^+(x;\nu)\,
\ol{u_0^+(x;\nu)} - \ol{u_1^+(x;\nu)}\, u_0^+(x;\nu)]} \lb{8.20} \\
&= - \f{1}{a_0(\nu)^2 2i \Ima m(x+i0;\nu)} \lb{8.21} \\
&= \f{i}{2\pi a_0(\nu)^2  w_\nu(x)} \lb{8.22}
\end{align}
so
\begin{equation}\lb{8.23}
\f{1}{\pi}\, \Ima \wti G_{00}(x+i0;\nu) = \f{1}{2\pi^2 a_0(\nu)^2 w_\nu(x)}
\end{equation}

But the whole-line Jacobi matrix $\ti J_\nu$ has purely a.c.\ spectrum
$\sigma(\ti J_\nu)=\fre$ and the density of the probability
spectral measure for $\ti J_\nu$ and $\delta_0$ is $\f{1}{\pi} \Ima \wti
G_{00}(x+i0;\nu)$, so
\begin{equation}\lb{8.24}
\f{1}{\pi} \int_\fre \Ima\wti G_{00}(x+i0;\nu)\, dx = 1
\end{equation}
\eqref{8.23} and \eqref{8.24} imply \eqref{8.19}.
\end{proof}

\begin{proposition}\lb{P8.4} \eqref{8.15} holds.
\end{proposition}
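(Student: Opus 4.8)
The plan is to evaluate $\norm{k_n^+}_w^2 = \int_\fre \abs{k_n^+(x)}^2 w(x)\,dx$ by substituting the definition \eqref{8.11} and computing the three factors of the integrand via the two lemmas just established. First I would observe that for $x\in\fre$ the point $\z(x-i0)$ lies on $\partial\bbD$, where $\abs{B}\equiv 1$ (by \eqref{1.19x} and $G_\fre\equiv0$ on $\fre$); hence formula \eqref{5.9} applied to $d\ti\mu$ gives, on the spectrum,
\[
\abs{\ti u_{n+1}^+(x)}^2 = \ti a_{n+1}^{-2}\,\abs{u(\z(x-i0);\ti\mu_{n+1})}^2 ,
\]
where $d\ti\mu_{n+1}\in\calT_\fre$ is the spectral measure of $\ti J^{(n+1)}$. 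Next, Lemma~\ref{L8.2} is applied twice: once to $d\mu$ to get $\abs{u(\z(x);\mu)}^2 = v_\fre(x)/w(x)$, and once to $d\ti\mu_{n+1}$ to get $\abs{u(\z(x-i0);\ti\mu_{n+1})}^2 = v_\fre(x)/w_{n+1}(x)$, where $w_{n+1}$ denotes the a.c.\ weight of $d\ti\mu_{n+1}$.

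Feeding these into \eqref{8.11}, the numerators contribute a factor $v_\fre(x)^2$ which cancels two of the $v_\fre(x)^2$ in the denominator, leaving
\[
\abs{k_n^+(x)}^2 = \f{1}{4\pi^2\,\ti a_{n+1}^2\, w(x)\, w_{n+1}(x)} ,
\]
so that $\norm{k_n^+}_w^2 = \dfrac{1}{4\pi^2\,\ti a_{n+1}^2}\displaystyle\int_\fre \f{dx}{w_{n+1}(x)}$. The remaining integral is exactly what Lemma~\ref{L8.3} computes, applied to $d\ti\mu_{n+1}\in\calT_\fre$: it equals $2\pi^2\, a_0(\ti\mu_{n+1})^2$. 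The only bookkeeping point is that translating the two-sided almost periodic matrix representing $\ti\mu$ by $n+1$ sites identifies the whole-line $a_0$ of the torus element attached to $\ti J^{(n+1)}$ with the $\ti J$-link between sites $n+1$ and $n+2$, i.e.\ $a_0(\ti\mu_{n+1}) = \ti a_{n+1}$. Hence $\norm{k_n^+}_w^2 = \dfrac{2\pi^2\,\ti a_{n+1}^2}{4\pi^2\,\ti a_{n+1}^2} = \tfrac12$, and $\norm{k_n^-}_w^2 = \tfrac12$ follows at once from \eqref{8.12} since $\abs{k_n^-} = \abs{k_n^+}$.

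The computation is essentially mechanical once Lemmas~\ref{L8.2} and \ref{L8.3} are in hand; the two spots that require a moment's attention are the identity $a_0(\ti\mu_{n+1}) = \ti a_{n+1}$ (keeping the indexing of the stripped matrices and their isospectral-torus representatives consistent) and the fact that $\abs{B(\z(x-i0))} = 1$ on $\fre$ rather than $<1$. Neither is a genuine obstacle, so I expect no real difficulty in this proposition.
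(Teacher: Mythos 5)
Your proposal is correct and follows essentially the same route as the paper: apply \eqref{5.9} with $\abs{B}=1$ on $\partial\bbD$ to reduce $\abs{\ti u_{n+1}^+}^2$ to $\ti a_{n+1}^{-2}\abs{u(\cdot;\ti\mu_{n+1})}^2$, use Lemma~\ref{L8.2} twice to obtain $\abs{k_n^+}^2 = [4\pi^2 \ti a_{n+1}^2 w(x)\ti w_{n+1}(x)]^{-1}$, and finish with Lemma~\ref{L8.3} and the identification $a_0(\ti\mu_{n+1})=\ti a_{n+1}$. The paper's proof is exactly this computation (its \eqref{8.25}--\eqref{8.27}), with the indexing point you flag left implicit.
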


\begin{proof} By \eqref{5.9} and \eqref{8.11},
\begin{equation}\lb{8.25}
\abs{k_n^+(x)}^2 = \f{\abs{u(\z(x);\mu)}^2
\abs{u(\z(x);\ti\mu_{n+1})}^2}{4\pi^2 (\ti a_{n+1})^2 v_\fre(x)^2}
\end{equation}
so, by Lemma~\ref{L8.2},
\begin{equation}\lb{8.26}
\abs{k_n^+(x)}^2 = \f{1}{4\pi^2 (\ti a_{n+1})^2 w(x) \ti w_{n+1}(x)}
\end{equation}
and so,
\begin{equation}\lb{8.27}
\int_\fre\, \abs{k_n^+(x)}^2 w(x)\, dx = \f{1}{4\pi^2(\ti a_{n+1})^2}
\int_\fre \f{dx}{\ti w_{n+1}(x)} = \f12
\end{equation}
by Lemma~\ref{L8.3}. Since $\abs{k_n^-}=\abs{k_n^+}$, we get the same
result for $\norm{k_n^-}_w^2$.
\end{proof}

\begin{lemma}\lb{L8.5} Let $f\in L^1(\fre,d\rho_\fre)$. Then
\begin{equation}\lb{8.28}
\lim_{n\to\infty} \int_\fre B(\z(x))^{n} f(x)\, d\rho_e(x) =0
\end{equation}
Moreover, \eqref{8.28} holds uniformly on norm compact subsets of
$L^1(\fre,d\rho_\fre)$.
\end{lemma}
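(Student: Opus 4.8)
The plan is to reduce \eqref{8.28} to the classical Riemann--Lebesgue lemma by a band-by-band change of variables, exploiting that $|B(\z(x))|\equiv 1$ on $\fre$. Since $\fre$ is the finite union of the bands $[\alpha_j,\beta_j]$, it suffices to prove the statement with $\fre$ replaced by each $[\alpha_j,\beta_j]$, $j=1,\dots,\ell+1$. Fix $j$. Because $G_\fre$ vanishes on $\fre$, \eqref{1.19x} gives $|B(\z(x))|=e^{-G_\fre(\x(\z(x)))}=e^{-G_\fre(x)}=1$ for $x\in(\alpha_j,\beta_j)$, so $B(\z(x))=e^{i\phi_j(x)}$ for a real continuous branch $\phi_j$ of the argument along the arc $x\mapsto\z(x)\subset\partial\bbD$. (Here $\z$ restricted to the open band is a homeomorphism onto an open sub-arc of $\partial\bbD\setminus\calL$, since $\fre\cap\calL=\emptyset$ and $\x^\sharp$, hence its local inverse $\z$, is a local analytic homeomorphism away from the branch points $\alpha_j,\beta_j$.)

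The monotonicity input is the elementary fact that the argument of a Blaschke product is strictly increasing along $\partial\bbD$: off $\calL$ one has $\frac{d}{d\theta}\arg B(e^{i\theta})=\sum_{\gamma\in\Gamma}(\text{Poisson kernel of }\gamma(0)\text{ at }e^{i\theta})>0$. Composing with the monotone parametrization $x\mapsto\z(x)$ shows $\phi_j$ is a strictly monotone, real-analytic function of $x$ on $(\alpha_j,\beta_j)$ (possibly tending to $\pm\infty$ at the endpoints, where zeros of $B$ may accumulate). I would then change variables $\phi=\phi_j(x)$. Since $d\rho_\fre\restriction\fre$ is absolutely continuous with respect to $dx$ with $L^1$ density by \eqref{4.11}, and $\phi_j$ is a monotone $C^1$-diffeomorphism of $(\alpha_j,\beta_j)$ onto an interval $I_j\subseteq\bbR$, the finite complex measure $f\,d\rho_\fre\restriction(\alpha_j,\beta_j)$ pushes forward to an absolutely continuous measure $F_j(\phi)\,d\phi$ on $I_j$ with $\int_{I_j}|F_j|\,d\phi=\int_{(\alpha_j,\beta_j)}|f|\,d\rho_\fre$, so $F_j\in L^1(I_j,d\phi)$. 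Therefore $\int_{(\alpha_j,\beta_j)}B(\z(x))^n f(x)\,d\rho_\fre(x)=\int_{I_j}e^{in\phi}F_j(\phi)\,d\phi\to 0$ by Riemann--Lebesgue, and summing over the finitely many $j$ gives \eqref{8.28}.

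For the uniformity statement, note that $f\mapsto(F_1,\dots,F_{\ell+1})$ is an isometric isomorphism of $L^1(\fre,d\rho_\fre)$ onto $\bigoplus_j L^1(I_j,d\phi)$, so it carries norm-compact sets to norm-compact sets, and it suffices to see that Riemann--Lebesgue holds uniformly on norm-compact subsets of $L^1$ of the line. This is the standard covering argument: given a compact $\calK$ and $\veps>0$, cover $\calK$ by finitely many balls $B(F^{(i)},\veps)$, pick $N$ with $\bigl|\int e^{in\phi}F^{(i)}\,d\phi\bigr|<\veps$ for all $i$ and $n\geq N$, and conclude $\bigl|\int e^{in\phi}F\,d\phi\bigr|<2\veps$ for all $F\in\calK$ and $n\geq N$ using $\|e^{in\phi}\|_\infty=1$.

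The main obstacle is the monotonicity step: making precise that $\arg B$ composed with the band parametrization $x\mapsto\z(x-i0)$ is strictly monotone, so that $\phi_j$ is a legitimate change of variable, and controlling its behavior near the band endpoints, where $\calL$ may be approached and $d\rho_\fre/dx$ blows up like $\dist(\cdot,\{\alpha_j,\beta_j\})^{-1/2}$. The total-mass identity $\|F_j\|_{L^1(d\phi)}=\|f\|_{L^1((\alpha_j,\beta_j),\,d\rho_\fre)}$ renders the endpoint behavior harmless; the monotonicity itself is the genuine point, and it rests only on the classical fact that Blaschke products have strictly increasing argument along the unit circle together with the fact that $\x$ is locally univalent on the open bands.
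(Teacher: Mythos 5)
Your proof is correct, but it takes a genuinely different route from the paper's. The paper first transfers the integral to the unit circle via Corollary~4.6 of paper~I, rewriting it as $\int_0^{2\pi}B(e^{i\theta})^n f(\x(e^{i\theta}))\,\frac{d\theta}{2\pi}$; it then observes that $\{B^n\}_{n\in\bbZ}$ is an orthonormal system in $L^2(\partial\bbD,\frac{d\theta}{2\pi})$ (by the Cauchy theorem, since $B(0)=0$ and $|B|=1$ a.e.\ on $\partial\bbD$), so Bessel's inequality gives the decay for $L^2$ densities, and the $L^1$ case and the uniformity on norm compacts follow by approximation using $\|B^n\|_\infty\le 1$. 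You instead stay on $\fre$, use that $|B(\z(x))|=1$ on the open bands to write $B(\z(x))=e^{i\phi_j(x)}$, and reduce to the classical Riemann--Lebesgue lemma via the strict monotonicity of $\phi_j$; this is precisely the ``stationary phase'' route the paper mentions in the remark after the lemma, made rigorous by the fact that $\frac{d}{d\theta}\arg B$ is a convergent sum of Poisson kernels, hence strictly positive off $\calL$, together with local univalence of $\x$ on the open band arcs. What your approach buys is independence from paper~I's transfer formula and an explicit identification of the oscillation mechanism; what the paper's approach buys is brevity and no need to discuss the boundary behavior of $\arg B$ or of $\x$ near band edges at all (orthonormality plus Bessel does everything at once). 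Your handling of the two delicate points --- possible unboundedness of $\phi_j$ near the band endpoints, neutralized by the mass identity $\|F_j\|_{L^1(d\phi)}=\|f\|_{L^1(d\rho_\fre)}$ so that Riemann--Lebesgue on the whole line applies, and the covering argument for uniformity on compacts --- is sound. One cosmetic point: $\calL\subset\partial\bbD$ while $\fre\subset\bbR$, so the relevant assertion is that the open arcs $\z((\alpha_j,\beta_j))$ lie in $\partial\bbD\setminus\calL$ (which is what makes $B$ and $\x$ real-analytic there), not that ``$\fre\cap\calL=\emptyset$''.
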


\begin{proof}
Without loss of generality, assume that $f$ is real-valued. Then by
Corollary 4.6 of paper I, we obtain
\begin{align} \lb{8.29}
\int_\fre B(\z(x))^{n} f(x)\, d\rho_e(x) = \int_0^{2\pi}
B(e^{i\theta})^{n}f(\x(e^{i\theta}))\,\f{d\theta}{2\pi}
\end{align}
By the Cauchy theorem, $\{B^n\}_{n\in\bbZ}$ forms an orthonormal system
in $L^2(\partial\bbD,\f{d\theta}{2\pi})$. Hence it follows from the
Bessel inequality that RHS of \eqref{8.29} converges to zero for any
$L^2$-function. The general case of $L^1$-functions and the result on
uniform convergence on norm compacts follow by approximation.
\end{proof}

\begin{remark} The above result can be also established via a stationary
phase argument.
\end{remark}

\begin{proposition} \lb{P8.6} \eqref{8.16} holds.
\end{proposition}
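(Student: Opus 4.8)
The plan is to reduce \eqref{8.16} to the oscillatory estimate of Lemma~\ref{L8.5}. Since $k_n^-=\overline{k_n^+}$ by \eqref{8.12}, and the weighted inner product is conjugate‑linear in its first slot, $\jap{k_n^-,k_n^+}_w=\int_\fre (k_n^+(x))^2\,w(x)\,dx$ (the opposite convention only conjugates the right‑hand side, which is immaterial for the limit). Inserting the definition \eqref{8.11} of $k_n^+$ and then the boundary‑value form of \eqref{5.9}, namely $\ti u_{n+1}^+(x)=\ti a_{n+1}^{-1}\,B(\z(x))^{n+1}\,u(\z(x);\ti\mu_{n+1})$, pulls a factor $B(\z(x))^{2(n+1)}$ out front:
\[
\jap{k_n^-,k_n^+}_w=\int_\fre B(\z(x))^{2(n+1)}\,h_n(x)\,dx,\qquad
h_n(x):=-\,\frac{\overline{u(\z(x);\mu)}^{\,2}\,u(\z(x);\ti\mu_{n+1})^2\,w(x)}{4\pi^2\,(\ti a_{n+1})^2\,v_\fre(x)^2}.
\]
Because $\abs{B(\z(x))}=1$ on $\fre$, one has $\abs{h_n(x)}=\abs{k_n^+(x)}^2\,w(x)$, which by \eqref{8.26} equals $\bigl(4\pi^2(\ti a_{n+1})^2\,\ti w_{n+1}(x)\bigr)^{-1}$; hence $\int_\fre\abs{h_n(x)}\,dx=\tfrac12$ for every $n$ by Lemma~\ref{L8.3} (this is exactly the computation in \eqref{8.27}).

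Next I would pass to the equilibrium measure. Since $d\rho_\fre$ is mutually absolutely continuous with $dx\restriction\fre$ with Radon--Nikodym derivative pinched between positive constant multiples of $\dist(x,\bbR\setminus\fre)^{-1/2}$ (see \eqref{4.11}), we may write $h_n(x)\,dx=f_n(x)\,d\rho_\fre(x)$ with $f_n\in L^1(\fre,d\rho_\fre)$ and $\norm{f_n}_{L^1(\fre,d\rho_\fre)}=\norm{h_n}_{L^1(\fre,dx)}=\tfrac12$. The key structural point is that $f_n$ depends on $n$ only through the point $\ti\mu_{n+1}$ of the compact isospectral torus $\calT_\fre$ (and through $\ti a_{n+1}$, itself a continuous function of that point), so $f_n=F(\ti\mu_{n+1})$ for a map $F\colon\calT_\fre\to L^1(\fre,d\rho_\fre)$. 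This map is continuous: if $\nu_k\to\nu$ in $\calT_\fre$, the corresponding integrands converge a.e.\ on $\fre$ by the continuity of the boundary values $u(\z(x);\nu)$ and of the parameters of $\nu$ (Sections~8--9 of paper~I), and since their $L^1(\fre,dx)$-norms are all equal to $\tfrac12$, Scheff\'e's lemma upgrades a.e.\ convergence to $L^1(dx)$-convergence, hence to $L^1(\fre,d\rho_\fre)$-convergence of the $f$'s. Consequently $\{f_n\}_{n\ge1}\subset F(\calT_\fre)$ is a relatively norm‑compact subset of $L^1(\fre,d\rho_\fre)$.

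Finally, by the uniform version of Lemma~\ref{L8.5}, $\sup_{f\in\calK}\bigl|\int_\fre B(\z(x))^{m}f(x)\,d\rho_\fre(x)\bigr|\to0$ as $m\to\infty$ for any fixed norm‑compact $\calK\subset L^1(\fre,d\rho_\fre)$; applying this with $m=2(n+1)$ and $\calK=\overline{\{f_k\}}$ gives
\[
\abs{\jap{k_n^-,k_n^+}_w}=\Bigl|\int_\fre B(\z(x))^{2(n+1)}f_n(x)\,d\rho_\fre(x)\Bigr|\ \longrightarrow\ 0,
\]
which is \eqref{8.16}. (Alternatively, one may avoid the uniform statement: along a subsequence with $\ti\mu_{n_j+1}\to\nu_\infty$ in $\calT_\fre$ one has $f_{n_j}\to F(\nu_\infty)$ in $L^1(d\rho_\fre)$, so using $\abs{B}\le1$ on $\fre$ and the plain Lemma~\ref{L8.5} for the fixed function $F(\nu_\infty)$ one again concludes.) The main obstacle is precisely the compactness of $\{f_n\}$, i.e.\ the $L^1$-continuity of $\nu\mapsto f_\nu$: this is where the \emph{exact} mass identity $\int_\fre\abs{h_n}\,dx=\tfrac12$ of Lemma~\ref{L8.3}, rather than a mere uniform bound, is needed, so that the potentially singular band‑edge behaviour of the weights $\ti w_{n+1}$ and $v_\fre$ does not obstruct the passage from a.e.\ to $L^1$ convergence.
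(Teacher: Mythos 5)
Your proof is correct and follows essentially the same route as the paper's: both reduce \eqref{8.16} to the oscillatory integral $\int_\fre B(\z(x))^{2n+2}f_n(x)\,dx$ with a density $f_n$ that depends on $n$ only through the point $\ti\mu_{n+1}$ of the compact torus $\calT_\fre$, and both conclude via the uniform statement in Lemma~\ref{L8.5} after establishing that these densities form a norm-compact family in $L^1$. The only difference is that you spell out two steps the paper leaves implicit, namely the passage from $dx$ to $d\rho_\fre$ and the Scheff\'e argument (using the exact mass identity from Lemma~\ref{L8.3}) for the $L^1$-continuity of $\nu\mapsto f(\cdot;\nu)$.
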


\begin{proof} By the same calculation that was used in the proof of
Proposition~\ref{P8.4},
\begin{equation}\lb{8.30}
\jap{k_n^-, k_n^+}_w = \int_\fre f_n(x) B^{2n+2}(\z(x))\, dx
\end{equation}
where
\begin{equation}\lb{8.31}
f_n(x) = -\f{1}{4\pi^2 (\ti a_{n+1})^2} \,
\f{u(\z(x);\ti\mu_{n+1})^2}{v_\fre(x)} \,
\f{\abs{u(\z(x); \mu)}^2}{u(\z(x); \mu)^2}
\end{equation}

For $d\nu\in\calT_\fre$, let
\begin{equation}\lb{8.32}
f(x;\nu) = -\f{1}{4\pi^2 a_0(\nu)^2}\, \f{u(\z(x);\nu)^2}{v_\fre(x)} \,
\f{\abs{u(\z(x);\mu)}^2}{u(\z(x);\mu)^2}
\end{equation}
By Lemma~\ref{L8.2}, the $f$'s are all in $L^1$ (with $L^1$ norm $1/2$
by Lemma \ref{L8.3}) and $f$ is $L^1$ continuous in $\nu$. So,
since $\calT_\fre$ is compact, we see from Lemma~\ref{L8.5} that the
integral in \eqref{8.30} goes to zero.
\end{proof}

This leaves \eqref{8.17}. The argument is somewhat complicated in case
there are bound states, especially if there are infinitely many.
So let us consider it first when $d\mu$ has no point masses in
$\bbR\setminus\fre$.

\begin{proposition}\lb{P8.10} Suppose $d\mu$ has support $\fre$ so that
$u(z;\mu)$ is nonvanishing on $\bbD$. Then
\eqref{8.17} holds.
\end{proposition}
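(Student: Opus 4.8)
The idea is to evaluate $\langle k_n^-,p_n\rangle_w$ essentially exactly, turning it into a residue; the hypothesis that $u(z;\mu)$ is nonvanishing on $\bbD$ is exactly what makes this work. First I would rewrite the inner product. Since $k_n^-=\overline{k_n^+}$ and $p_n$ is real on $\fre$, $\langle k_n^-,p_n\rangle_w=\int_\fre k_n^+(x)p_n(x)w(x)\,dx$; substituting \eqref{8.11} and using Lemma~\ref{L8.2} in the form $w(x)=v_\fre(x)\abs{u(\z(x);\mu)}^{-2}$, the weight and the factor $\overline{u(\z(x);\mu)}$ cancel, leaving
\[
\langle k_n^-,p_n\rangle_w=\f{1}{2\pi i}\int_\fre \f{\ti u_{n+1}^+(x)}{u(\z(x);\mu)}\,p_n(x)\,dx .
\]
The key point is that the integrand is the boundary value on $\fre$ (from $\bbC_-$) of $\Phi_n(z):=\ti u_{n+1}(z;\ti\mu)\,p_n(\x(z))/u(z;\mu)$. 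By \eqref{5.9}, $\ti u_{n+1}(\cdot;\ti\mu)$ has a zero of order $n+1$ at each point of $\{\gamma(0)\}_{\gamma\in\Gamma}$, cancelling the poles of $p_n(\x(\cdot))$ there, while $u(\cdot;\mu)$ is nonvanishing on $\bbD$; hence $\Phi_n$ is analytic on all of $\bbD$. All three factors are $\Gamma$-automorphic with the common character $\chi_J=\chi_{\ti J}$ (cf.\ \eqref{6.28}), so $\Phi_n$ is $\Gamma$-invariant and descends to a single-valued analytic function $x\mapsto\Phi_n(\z(x))$ on $\bbC\cup\{\infty\}\setminus\fre$.

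Next I would record two facts about $\Phi_n\circ\z$. From \eqref{1.17}, \eqref{1.20}, the normalization $p_n(x)=(a_1\cdots a_n)^{-1}x^n+\cdots$, and then \eqref{6.35}, a short computation gives, near $z=0$, $\Phi_n(z)=x_\infty^{-1}c_n\,z+O(z^2)$, equivalently $\Phi_n(\z(x))=c_n\,x^{-1}+O(x^{-2})$ as $x\to\infty$, with
\[
c_n=\f{a_{n+1}}{\ti a_{n+1}}\cdot\f{u(0;\ti\mu_{n+1})}{u(0;\mu_{n+1})}>0 .
\]
Thus $\Phi_n\circ\z$ is analytic at $\infty$, vanishing there, and the coefficient of $x^{-1}$ in its expansion is $c_n$. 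Also, using $\abs{B(\z(x))}=1$ on $\fre$, Lemma~\ref{L8.2}, the identity $\int_\fre dx/\ti w_{n+1}(x)=2\pi^2\ti a_{n+1}^2$ (Lemma~\ref{L8.3}, cf.\ \eqref{8.27}), and $\norm{p_n}_w\leq1$, Cauchy--Schwarz yields $\int_\fre\abs{\Phi_n(\z(x))}\,dx\leq\sqrt2\,\pi$, so $\Phi_n\circ\z\in L^1(\fre)$ uniformly in $n$.

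The computation itself is then a contour deformation. Write $I_n=\int_\fre\Phi_n(\z(x-i0))\,dx$, so $\langle k_n^-,p_n\rangle_w=I_n/2\pi i$. Deforming a large positively oriented circle inward onto $[\alpha_1,\beta_{\ell+1}]$ — legitimate since $\Phi_n\circ\z$ is analytic off $\fre$, of Smirnov class, and on the gaps $\z$ is continuous across $\bbR$ so the two boundary values agree and cancel — the residue at $\infty$ gives $\int_\fre[\Phi_n(\z(x-i0))-\Phi_n(\z(x+i0))]\,dx=2\pi i\,c_n$. On the other hand, since $B$, $\x$, $u(\cdot;\mu)$, $u(\cdot;\ti\mu_{n+1})$ and $p_n$ all have real power-series coefficients at the origin, $\Phi_n(\bar z)=\overline{\Phi_n(z)}$; combined with $\Gamma$-invariance this gives $\Phi_n(\z(x+i0))=\overline{\Phi_n(\z(x-i0))}$, hence $\int_\fre\Phi_n(\z(x+i0))\,dx=\overline{I_n}$. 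Therefore $I_n-\overline{I_n}=2\pi i\,c_n$, i.e.\ $\Ima I_n=\pi c_n$, and so
\[
\Real\langle k_n^-,p_n\rangle_w=\Real\f{I_n}{2\pi i}=\f{\Ima I_n}{2\pi}=\f{c_n}{2} .
\]
Finally $a_{n+1}/\ti a_{n+1}\to1$ by Theorem~\ref{T6.5}, and $u(0;\ti\mu_{n+1})/u(0;\mu_{n+1})\to1$ by \eqref{6.33} together with the fact (used already for Corollary~\ref{C6.7}) that $u(0;\,\cdot\,)$ is bounded away from $0$ on the compact torus $\calT_\fre$. Hence $\Real\langle k_n^-,p_n\rangle_w\to\tfrac12$, which is \eqref{8.17}.

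The hard part is the contour step: one must justify that $\Phi_n\circ\z$ has honest nontangential boundary values on both lips of $\fre$, that Cauchy's theorem survives up to the branch-point band edges — here the uniform $L^1$ bound and the Smirnov/$H^p$ regularity of the Jost functions (from Theorem~\ref{T2.1} and the facts recalled at the start of this section) are what is needed — and that the gap contributions to the collapsed contour cancel. The rest is bookkeeping; in particular, no delicate boundary control of $u(\cdot;\mu_{n+1})\to u(\cdot;\ti\mu_{n+1})$ is required, only the behaviour at $z=0$.
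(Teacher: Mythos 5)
Your proof is correct and is essentially the paper's argument: both reduce $\Real\jap{k_n^-,p_n}_w$ via \eqref{8.18} to a residue computation for the $\Gamma$-invariant function $\ti u_{n+1}(z)p_n(\x(z))/u(z;\mu)$ at $z=0$, yielding exactly $\tfrac12\,u_{n+1}(0;\ti\mu)/u_{n+1}(0;\mu)=\tfrac12 c_n\to\tfrac12$. The only (cosmetic) differences are that you work downstairs on $\bbC\cup\{\infty\}\setminus\fre$ with a contour collapsing through the gaps, where the paper integrates over $\partial\calF$ and cancels the orthocircle contributions, and that you extract the real part from the symmetry $\Phi_n(\z(x+i0))=\ol{\Phi_n(\z(x-i0))}$ rather than by assembling the $k_n^\pm$ half-contours.
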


\begin{proof} We claim that
\begin{equation}\lb{8.34}
\begin{split}
\Real \biggl[ \int_\fre & \ol{k_n^-(x)}\, p_n(x) w(x)\, dx \biggr] \\
&= \f12 \int_{\partial\calF\cap\partial\bbD} \f{\ol{u(z;\mu)}\, \ti
u_{n+1}(z)}{2\pi i v_\fre(\x(z))}\,
p_n(\x(z))\, w(\x(z)) \x'(z)\, dz
\end{split}
\end{equation}
where the integral is evaluated counterclockwise. As
$\Real\ol{k_n^-}=\f12 k_n^+ + \f12 k_n^-$ and $\Real p_n(x)=p_n(x)$,
the $k_n^+$ term directly gives the counterclockwise integral over
$\bbC_+\cap\partial\calF\cap\partial\bbD$ (since
$\x'(z)$ is positive there). Since $u$ and $\ti u_{n+1}^+$ are real on
$\bbR$,
and $\x'$ and $i$ flip signs under $e^{i\theta}\to e^{-i\theta}$, the
$k_n^-$ term gives the integral over
$\partial\calF\cap\partial\bbD\cap\bbC_-$.

Notice next that, by \eqref{8.18},
\begin{equation}\lb{8.35}
\ol{u(z;\mu)}\, \f{w(\x(z))}{v_\fre(\x(z))} = \f{1}{u(z;\mu)}
\end{equation}
so
\begin{equation}\lb{8.36}
\text{LHS of \eqref{8.34}} = \f{1}{4\pi i}
\int_{\partial\calF\cap\partial\bbD}
\f{\ti u_{n+1}(z) p_n(\x(z))}{u(z;\mu)}\, \x'(z)\, dz
\end{equation}

By \eqref{6.28}, \eqref{5.9}, and the choice of $d\ti\mu$, the integrand
in \eqref{8.36}, call it $F$, is automorphic under
$\Gamma$. Since $F$ is real on $\bbR$, we have $F(\bar z)=\ol{F(z)}$.
Moreover, there are $\gamma\in\Gamma$ so that for $z\in
C_\ell^+$, we have $\ol{\gamma(z)}=z$, so we conclude that $F$ is real
on $C_\ell^+$ and $C_\ell^-$. Thus, orienting the
contours counterclockwise about $0$, we get
\[
\int_{C_\ell^+ \cup C_\ell^-} F(z)\, dz =0
\]
since $C_\ell^+$ and $C_\ell^-$
run in opposite directions. It follows that
\begin{equation}\lb{8.37}
\text{LHS of \eqref{8.34}} = \f{1}{4\pi i} \int_{\partial\calF}
\f{\ti u_{n+1}(z) p_n(\x(z))}{u(z;\mu)}\, \x'(z)\, dz
\end{equation}

Inside $\calF$, the integrand is regular except at $z=0$. Since $p_n$ is
a polynomial of degree $n$ in $\x(z)$,
and $\x(z)$ has a simple pole at $z=0$, $z^n p_n(\x(z))$ is regular at
$z=0$. By \eqref{5.9}, $\ti u_{n+1}(z)/B(z)^{n+1}$
is regular at $z=0$. Thus, $\ti u_{n+1}(z) p_n(\x(z))$ has a first-order
zero at $z=0$. $u(z)$ is regular there and
$\x'(z)$ has a double pole. So the integrand in \eqref{8.37} has a
simple pole at $z=0$ and we conclude that
\begin{equation}\lb{8.38}
\text{LHS of \eqref{8.34}} = \f12
\left[\f{u_{n+1}(z;\mu)p_n(\x(z))}{zu(z;\mu)}\bigg|_{z=0}\right]\,
\f{u_{n+1}(0;\ti\mu)}{u_{n+1}(0;\mu)}\, [z^2\x'(z)|_{z=0}]
\end{equation}
The first factor in \eqref{8.38} is $\left. z^{-1}
G_{n+1,n+1}(\x(z))\right|_{z=0}$, which is
\begin{equation}\lb{8.39}
\lim_{z\to 0} z^{-1} \biggl( -\f{1}{\x(z)} + O\biggl(
\f{1}{\x(z)^2}\biggr)\biggr) = -\f{1}{x_\infty}
\end{equation}
The third factor is
\begin{equation}\lb{8.40}
\lim_{z\to 0} z^2 \biggl( -\f{x_\infty}{z^2} + O(1)\biggr) =-x_\infty
\end{equation}
so
\begin{equation}\lb{8.41}
\text{LHS of \eqref{8.34}} = \f12\,
\f{u_{n+1}(0;\ti\mu)}{u_{n+1}(0;\mu)} \to \f12
\end{equation}
by Theorem~\ref{T6.6}.
\end{proof}

\begin{proposition} \lb{P8.8} If $d\mu$ has support $\fre$ plus finitely
many mass points in $\bbR\setminus\fre$,
then \eqref{8.17} holds.
\end{proposition}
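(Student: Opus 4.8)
The plan is to redo the contour computation from the proof of Proposition~\ref{P8.10}, now tracking the finitely many extra poles that the mass points create. Write $x_1,\dots,x_r$ for the points of $\bbR\setminus\fre$ with $\mu(\{x_k\})>0$ and put $z_k=\z(x_k)\in\calF$. First I would check that the reduction of $\Real\jap{k_n^-,p_n}_w$ to the contour integral $\f{1}{4\pi i}\int_{\partial\calF}\f{\ti u_{n+1}(z)\,p_n(\x(z))}{u(z;\mu)}\,\x'(z)\,dz$, i.e.\ the passage \eqref{8.34}--\eqref{8.37}, goes through unchanged: it uses only \eqref{8.18}, the automorphy of the integrand (still valid by \eqref{6.28}, \eqref{5.9}, and the choice of $d\ti\mu$), and the reality of the integrand on the orthocircles. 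The one new feature is that $u(z;\mu)$ now vanishes simply at each $z_k$. If some $z_k$ lies on the orthocircle portion of $\partial\calF$ --- possible when $x_k$ sits in a bounded gap --- I would deform that piece of the contour slightly into $\bbD$ so that all the $z_k$ lie strictly inside it (harmless, since the $z_k$ are interior points of the disk); equivalently, one can push the whole identity down to a contour around $\fre$ in the $x$-plane, where $\fre$ and the mass points are manifestly separated.

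With the integrand now meromorphic inside $\calF$ and having simple poles only at $0$ and at $z_1,\dots,z_r$, the residue theorem turns $\Real\jap{k_n^-,p_n}_w$ into $\tfrac12$ times the residue at $0$, plus $\tfrac12$ times the sum of the residues at the $z_k$. The residue at $0$ is exactly the quantity evaluated in \eqref{8.38}--\eqref{8.41}, so it tends to $1$ by Theorem~\ref{T6.6}, supplying the desired $\tfrac12$. At $z_k$, since $p_n(\x(\cdot))$, $\ti u_{n+1}$ and $\x'$ are regular and nonvanishing there while $u(\cdot;\mu)$ has a simple zero, the residue equals $\f{\x'(z_k)}{u'(z_k;\mu)}\,\ti u_{n+1}(z_k)\,p_n(x_k)$, with the first factor a fixed nonzero constant. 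Thus the whole proposition comes down to showing $\ti u_{n+1}(z_k)\,p_n(x_k)\to0$ for each $k$.

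Both ingredients are immediate. Since $p_n$ is orthonormal for $d\mu$ and $d\mu$ has an atom of mass $\mu(\{x_k\})>0$ at $x_k$, one has $\abs{p_n(x_k)}\le\mu(\{x_k\})^{-1/2}$ for all $n$. By \eqref{5.9}, $\ti u_{n+1}(z_k)=\ti a_{n+1}^{-1}B(z_k)^{n+1}u(z_k;\ti\mu_{n+1})$; here $\ti a_{n+1}^{-1}$ is bounded (the Jacobi parameters of elements of $\calT_\fre$ are bounded away from $0$), $\abs{B(z_k)}=e^{-G_\fre(x_k)}<1$ by \eqref{1.19x} because $x_k\notin\fre$, and $u(z_k;\nu)$ is bounded uniformly over $\nu\in\calT_\fre$ by compactness of $\calT_\fre$ and continuity of $\nu\mapsto u(z_k;\nu)$ (as in the proof of Proposition~\ref{P8.6}). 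Hence $\ti u_{n+1}(z_k)\to0$ geometrically, the finite sum of residues at the $z_k$ vanishes in the limit, and $\Real\jap{k_n^-,p_n}_w\to\tfrac12$, which is \eqref{8.17}. The only step demanding any care is the first one --- making sure the reality/cancellation argument along the orthocircles survives a pole of the integrand lying on $\partial\calF$ --- and that is settled by the deformation (or $x$-plane) remark above; the remaining estimates are routine.
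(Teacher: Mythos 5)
Your proof is correct and follows the paper's argument essentially verbatim: you reuse the contour identity \eqref{8.34}--\eqref{8.37}, the residue at $z=0$ supplies the limit $\tfrac12$ exactly as in \eqref{8.38}--\eqref{8.41}, and the finitely many residues at the $z_k$ vanish because $B(z_k)^{n+1}\to 0$ geometrically while $p_n(x_k)$, $\ti a_{n+1}^{-1}$, and $u(z_k;\ti\mu_{n+1})$ stay bounded. The only divergence is in handling mass points in the gaps: the paper keeps the resulting poles on the orthocircles and interprets \eqref{8.37} as a principal value, with each boundary pole contributing half a residue so that the pair $z_k\in C_j^+$ and $\ol{z_k}=\gamma(z_k)\in C_j^-$ together yield one full residue, whereas you deform the contour or descend to the $x$-plane; if you deform, note that the conjugate point on $C_j^-$ is also a pole of the automorphic integrand, so your $x$-plane formulation is the cleaner of your two options.
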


\begin{proof} We follow the proof of the last proposition until we get
to \eqref{8.36}. However, $u$ now has a
pole at each $z_k$ in $\calF$ with
\begin{equation}\lb{8.42}
\x(z_k) = x_k\in\sigma(J)
\end{equation}
Thus, the integrand can have poles (but only finitely many) in
$\calF^\intt$ and also on $C_j^\pm$. Interpret
\eqref{8.37} as taking principal parts at the poles on $C_j^\pm$. Each
such pole contributes with half of $2\pi i$ times the residue,
so we get $2\pi i$ times the residue if we only count the poles in
$\calF$ (i.e., in $C_j^+$ but not in $C_j^-$).

The residue at $z_k$ is
\begin{equation}\lb{8.43}
\f{B(z_k)^{n+1} u(z_k;\ti\mu_{n+1}) p_n(x_k) \x'(z_k)}{2 \ti a_{n+1}
u'(z_k;\mu)}
\end{equation}
As $\sum_n \abs{p_n(x_k)}^2 = 1/\mu(\{x_k\})$, $\abs{B(z_k)} <1$ and
$\sup_n \abs{u(z_k;\ti\mu_{n+1})} <\infty$, the quantity in \eqref{8.43}
goes to zero. Since there are finitely many of these poles, their
contribution vanishes in the limit and LHS of \eqref{8.34} converges to
$1/2$.
\end{proof}

Finally, we turn to the general case. The following completes the proof
of Theorem~\ref{T8.1}:

\begin{proposition} \lb{P8.9} For any $d\mu\in\Sz(\fre)$, \eqref{8.17}
holds.
\end{proposition}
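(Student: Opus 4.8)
The plan is to carry the contour‑integral argument of Propositions~\ref{P8.10}--\ref{P8.8} through to the case of infinitely many bound states. (A useful side remark: by \eqref{8.14} and Propositions~\ref{P8.4}, \ref{P8.6}, $\norm{k_n^+ + k_n^-}_w^2\to 1$, so the identity $\norm{p_n - k_n^+ - k_n^-}_w^2 + \norm{p_n}_\s^2 = 1 + \norm{k_n^+ + k_n^-}_w^2 - 4\,\Real\jap{k_n^-,p_n}_w$ — valid since $p_n$ is real and $k_n^-=\ol{k_n^+}$ — already forces $\limsup_n \Real\jap{k_n^-,p_n}_w\le\tfrac12$, so it would even suffice to bound $\liminf$.)

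I would run the proof of Proposition~\ref{P8.8} unchanged through \eqref{8.36}. The integrand $F(z)=\ti u_{n+1}(z)\,p_n(\x(z))/u(z;\mu)$ now has infinitely many poles at the $z_k$ (some in $\calF^\intt$, some on the $C_j^\pm$), but these accumulate only on the Lebesgue‑null set $\calL\subset\partial\bbD$; so, exhausting $\calF$ by compact subregions, the boundary‑pairing yielding \eqref{8.37}, the residue at $z=0$ (which contributes $\tfrac12\,u_{n+1}(0;\ti\mu)/u_{n+1}(0;\mu)$), and the principal‑value treatment of the poles on the $C_j^\pm$ all go through, giving LHS of \eqref{8.34} $= \tfrac12\,u_{n+1}(0;\ti\mu)/u_{n+1}(0;\mu) + \sum_{k\ge 1}\bigl(\text{residue of $F\x'$ at $z_k$}\bigr)$. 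By Theorem~\ref{T6.6} the first term $\to\tfrac12$, so everything comes down to showing the residue sum tends to $0$.

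The key is to recast \eqref{8.43} usefully: equating the residue of $G_{n+1,n+1}(\x(z))$ at $z_k$ obtained from \eqref{7.2}--\eqref{7.5} with the one from the spectral decomposition of $J$ (the normalized eigenvector at $x_k$ has $m$‑th entry $\sqrt{\mu(\{x_k\})}\,p_{m-1}(x_k)$), and using \eqref{5.9} for $U_{n+1}(x_k)=u_{n+1}(z_k;\mu)$, the residue at $z_k$ becomes
\[
\frac{a_{n+1}}{2\ti a_{n+1}}\;\frac{u(z_k;\ti\mu_{n+1})}{u(z_k;\mu_{n+1})}\;\mu(\{x_k\})\,\abs{p_n(x_k)}^2 .
\]
Now $\sum_k \mu(\{x_k\})\abs{p_n(x_k)}^2\le 1$ for all $n$ (Bessel's inequality for the orthonormal eigenvectors, applied to $\delta_{n+1}$), each such term $\to 0$ as $n\to\infty$ for fixed $k$, the Widom condition bounds $a_{n+1}/\ti a_{n+1}$, and $\abs{u(z_k;\ti\mu_{n+1})}$ is bounded above uniformly in $k,n$ because on the compact torus $\calT_\fre$ the Jost functions, being exponentials of half the Poisson integral of the uniformly bounded densities $\log(v_\fre/w_\nu)$, are uniformly bounded on $\bbD$. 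Hence any fixed finite set of $k$'s contributes $o(1)$, and the residue sum $\to 0$ provided the tail $\sum_{k>K}$ is small uniformly in $n$.

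I expect that uniform tail estimate to be the main obstacle; it needs a lower bound on $\abs{u(z_k;\mu_{n+1})}$ for $z_k$ near $\partial\bbD$, uniform in $n$, which is delicate because $J^{(n+1)}$ may carry eigenvalues near the $x_k$ and $w_{n+1}$ is only a generic Szeg\H{o}‑class weight. The route is to combine $\calE(J)<\infty$ (so $\sum_k G_\fre(x_k)<\infty$ by \eqref{4.12x}, making $\sum_{k>K}(1-\abs{B(z_k)})$ small), the uniform control of stripped‑eigenvalue sums near gap edges from Theorem~\ref{T3.7}, the Denisov--Rakhmanov--Remling fact used in Section~\ref{s6} that $J^{(n)}$ approaches the eigenvalue‑free isospectral torus, and $\sum_{k>K}\mu(\{x_k\})\to 0$. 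An alternative that sidesteps the contour computation is to approximate $d\mu\in\Sz(\fre)$ by $d\mu^{(N)}=c_N^{-1}\bigl(d\mu-\sum_{k>N}\mu(\{x_k\})\delta_{x_k}\bigr)$ with $c_N=1-\sum_{k>N}\mu(\{x_k\})$ — these have finitely many bound states, so Proposition~\ref{P8.8} applies — and to transfer the conclusion via $u(\cdot;\mu^{(N)})=c_N^{1/2}\bigl(\prod_{k>N}B(\cdot,z_k)\bigr)^{-1}u(\cdot;\mu)$ together with $\ti\mu^{(N)}\to\ti\mu$ in $\calT_\fre$ (Theorem~7.3 of paper~I, since the deleted Blaschke factors tend to $1$); there the obstacle is instead the uniformity in $n$ of the comparison $p_n(\cdot;\mu^{(N)})\to p_n(\cdot;\mu)$.
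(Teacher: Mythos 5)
Your setup is sound through the contour identity, and the recasting of the residue at $z_k$ as $\tfrac{a_{n+1}}{2\ti a_{n+1}}\,\tfrac{u(z_k;\ti\mu_{n+1})}{u(z_k;\mu_{n+1})}\,\mu(\{x_k\})\abs{p_n(x_k)}^2$ is a correct and genuinely nice reformulation (as is the observation that only $\liminf_n\Real\jap{k_n^-,p_n}_w\geq\tfrac12$ is needed). But the proof is not complete: everything hinges on the uniform-in-$n$ smallness of the tail $\sum_{k>K}$, and that is exactly the step you leave open. The obstruction is real, not technical bookkeeping: $u(\cdot;\mu_{n+1})$ vanishes at the $\calF$-preimages of the eigenvalues of $J^{(n+1)}$, and by the interlacing of Theorem~\ref{T3.1} these zeros sit between consecutive poles of $M^{(n)}$ in each gap, so they can be arbitrarily close to $z_k$; no lower bound on $\abs{u(z_k;\mu_{n+1})}$ uniform in $k$ and $n$ is available from the ingredients you list ($\calE(J)<\infty$, Theorem~\ref{T3.7}, Remling). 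Your alternative route (deleting the small point masses from $d\mu$) has the defect you yourself flag: it changes the orthonormal polynomials, and there is no uniform-in-$n$ comparison between $p_n(\cdot;\mu^{(N)})$ and $p_n(\cdot;\mu)$.

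The paper's resolution (following Peherstorfer--Yuditskii) is a third route that sidesteps both obstacles: keep $d\mu$ and hence $p_n$ untouched, and instead truncate the Blaschke part of the Jost function, setting $u^{(m)}(z;\mu)=\prod_{k=1}^m B(z,z_k)\,\veps(z;\mu)$, while simultaneously replacing $d\ti\mu$ by the torus point $d\ti\mu^{(m)}$ whose Jost function carries the character of $u^{(m)}$ (this preserves the automorphy needed for the $C_j^\pm$ cancellation). The modified comparison functions $k_n^{(m)+}$ then have only finitely many poles away from $z=0$, so the argument of Proposition~\ref{P8.8} applies verbatim for each fixed $m$; and the error $\norm{k_n^{(m)+}-k_n^+}_w$ is controlled \emph{uniformly in $n$} because, by Lemma~\ref{L8.2} and \eqref{8.26}, the weight $w$ cancels the unknown density and the discarded Blaschke factors have modulus one on $\partial\bbD$ and converge locally uniformly. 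In other words, the correct object to approximate is the comparison function $k_n^+$ in $L^2(w\,dx)$, not the residue sum and not the measure. To repair your argument you would need to either adopt this truncation or supply the missing uniform lower bound on $\abs{u(z_k;\mu_{n+1})}$, and the latter does not appear to be true in the stated generality.
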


\begin{proof} Following Peherstorfer--Yuditskii \cite{PYpams}, we'll
approximate $u$ by one with a finite number of
zeros, but to preserve the fact that we need certain functions to be
automorphic, we also modify $\ti u_n$.

Label all the point masses of $d\mu$ in a single sequence
$\{x_k\}_{k=1}^\infty$ with corresponding points
$z_k\in\calF$ such that $\x(z_k)=x_k$. Let
\begin{equation}\lb{8.44}
u^{(m)}(z;\mu) =\prod_{k=1}^m B(z,z_k) \veps(z;\mu)
\end{equation}
and denote by $d\ti\mu^{(m)}$ the measure in the isospectral torus whose
Jost function has the same character as $u^{(m)}$. Define
\begin{equation}\lb{8.45}
k_n^{(m)+}(x) = \f{\ol{u^{(m)}(\z(x);\mu)} \,
u_{n+1}^+(x;\ti\mu^{(m)})}{2i \pi v_\fre (x)}
\end{equation}

Clearly, it suffices to prove that
\begin{equation}\lb{8.46}
\lim_{m\to\infty} \norm{k_n^{(m)+} - k_n^+}_w \to 0
\end{equation}
uniformly in $n$, and that
\begin{equation}\lb{8.47}
\lim_{m\to\infty}\, \lim_{n\to\infty} \abs{\Real\jap{k_n^{(m)+},
p_n}-\tfrac12}=0
\end{equation}

Since $\prod_{k=1}^m B(z,z_k)\to\prod_{k=1}^\infty B(z,z_k)$ uniformly
on compacts, the characters converge. Moreover,
this convergence of $B$'s is pointwise on $\partial\bbD$. The first
implies convergence of $u(\z(x);\ti\mu_{n+1}^{(m)})$
to $u(\z(x);\ti\mu_{n+1})$ away from the band edges (uniformly in $n$
and $x$  as $m\to\infty$) with uniform square root
bounds. This plus \eqref{8.26} yields \eqref{8.46}.

The proof of \eqref{8.47} follows the proof of Proposition~\ref{P8.8}.
The fact that we've arranged for the functions to
be automorphic allows the cancellation of the $C_j^+$ and $C_j^-$
integrals, and since there are only finitely many poles
away from $z=0$, we get convergence in \eqref{8.43} and hence in
\eqref{8.47}.
\end{proof}

\bigskip

%%%%%%%%%%%%%%%%%%%%%%%%%%%%%%%

\end{document}